\theoremstyle{plain}
\newtheorem{theorem}{Theorem}[section]
\newtheorem*{theorem-wo-number}{Theorem}
\newtheorem{lemma}[theorem]{Lemma}
\newtheorem{corollary}[theorem]{Corollary}
\newtheorem{proposition}[theorem]{Proposition}
\theoremstyle{definition}
\newtheorem{definition}[theorem]{Definition}
\newtheorem{example}[theorem]{Example}
\newtheorem{question}[theorem]{Question}
\theoremstyle{remark}
\newtheorem*{claim}{Claim}
\newcommand{\bR}{\mathbb{R}}
\newcommand{\R}{\bR}
\newcommand{\bQ}{\mathbb{Q}}
\newcommand{\Q}{\bQ}
\newcommand{\bN}{\mathbb{N}}
\newcommand{\N}{\bN}
\newcommand{\cA}{\mathcal{A}}
\newcommand{\cC}{\mathcal{C}}
\newcommand{\C}{\cC}
\newcommand{\cF}{\mathcal{F}}
\newcommand{\F}{\cF}
\newcommand{\cI}{\mathcal{I}}
\newcommand{\I}{\cI}
\newcommand{\cJ}{\mathcal{J}}
\newcommand{\J}{\cJ}
\newcommand{\cP}{\mathcal{P}}
\newcommand{\continuum}{\mathfrak{c}}
\newcommand{\FIN}{\mathsf{Fin}} 
\newcommand{\fin}{\FIN}
\newcommand{\Fin}{\FIN}
\newcommand{\conv}{\mathsf{conv}} 
\newcommand{\BI}{\mathsf{BI}}
\newcommand{\Hindman}{\mathcal{H}}
\DeclareMathOperator{\FS}{\mathsf{FS}}
\DeclareMathOperator{\FinBW}{FinBW}
\begin{document}


\title{Critical partition regular functions for compact spaces}


\author[R.~Filip\'{o}w]{Rafa\l{} Filip\'{o}w}
\address[Rafa\l{}~Filip\'{o}w]{Institute of Mathematics\\ Faculty of Mathematics, Physics and Informatics\\ University of Gda\'{n}sk\\ ul.~Wita Stwosza 57\\ 80-308 Gda\'{n}sk\\ Poland}
\email{rafal.filipow@ug.edu.pl}
\urladdr{\url{http://mat.ug.edu.pl/~rfilipow}}

\author[M.~Kowalczuk]{Ma\l{}gorzata Kowalczuk}
\address[Ma\l{}gorzata Kowalczuk]{Institute of Mathematics\\ Faculty of Mathematics, Physics and Informatics\\ University of Gda\'{n}sk\\ ul.~Wita Stwosza 57\\ 80-308 Gda\'{n}sk\\ Poland}
\email{m.kowalczuk.090@studms.ug.edu.pl}

\author[H.~Ksi\c{a}\.{z}ek]{Hubert Ksi\c{a}\.{z}ek}
\address[Hubert Ksi\c{a}\.{z}ek]{Institute of Mathematics\\ Faculty of Mathematics, Physics and Informatics\\ University of Gda\'{n}sk\\ ul.~Wita Stwosza 57\\ 80-308 Gda\'{n}sk\\ Poland}
\email{h.ksiazek.596@studms.ug.edu.pl}

\author[A.~Kwela]{Adam Kwela}
\address[Adam Kwela]{Institute of Mathematics\\ Faculty of Mathematics\\ Physics and Informatics\\ University of Gda\'{n}sk\\ ul.~Wita  Stwosza 57\\ 80-308 Gda\'{n}sk\\ Poland}
\email{Adam.Kwela@ug.edu.pl}
\urladdr{\url{https://mat.ug.edu.pl/~akwela}}

\author[G.~Ucal]{Grzegorz Ucal}
\address[Grzegorz Ucal]{Institute of Mathematics\\ Faculty of Mathematics, Physics and Informatics\\ University of Gda\'{n}sk\\ ul.~Wita Stwosza 57\\ 80-308 Gda\'{n}sk\\ Poland}
\email{g.ucal.094@studms.ug.edu.pl}

\thanks{The fourth-listed author was supported by the Polish National Science Centre project OPUS No. 2024/53/B/ST1/02494.}


\date{\today}


\subjclass[2020]{Primary: 54A20, 03E75. Secondary: 40A35.}



\keywords{ideal, 
conv ideal, 
Katetov order,
FinBW property}


\begin{abstract}
We study ideal-based refinements of sequential compactness arising from the class $\FinBW(\I)$, consisting of topological spaces in which every sequence admits a convergent subsequence indexed by a set outside a given ideal $\I$. A central theme of this work is the existence of critical ideals whose position in the Kat\v{e}tov order determines the relationship between a fixed class of spaces and the corresponding $\FinBW(\I)$ classes. Building on earlier results characterizing several classical topological classes via such ideals, we extend this theory to a broader framework based on partition regular functions, which unifies ordinary convergence with other non-classical convergence notions such as IP- and Ramsey-type convergence. Furthermore, we investigate the existence of critical ideals associated with function classes motivated by Mazurkiewicz’s theorem on uniformly convergent subsequences.
\end{abstract}


\maketitle




\section{Introduction}

See Section~\ref{sec:prelim} for notions and notations used in the introduction.

In recent years, various ideal-based refinements of classical convergence and topological properties have been intensively studied. One such notion is the class $\FinBW(\I)$ (associated to an ideal $\I$ on $\omega$) which consists of all topological  spaces $X$ having the property that for every sequence $(x_n)_{n\in \omega}$ in $X$ there exists $A\notin \I$ such that the subsequence $(x_n)_{n\in A}$ is convergent in $X$ (\cite[Definition~1.1]{MR4584767}).
For instance, $\FinBW(\Fin)$ coincides with the class of all sequentially compact  spaces, where $\Fin$ is the ideal of all finite subsets of $\omega$.

A natural  problem in this area is to determine whether a given class of topological spaces can be characterized as a $\FinBW(\I)$ for a suitable ideal $\mathcal I$.

A particularly interesting question concerns the existence of a \emph{critical ideal} $\I_{\C}$ for a given class $\C$ of topological spaces with the following property: 
the position of a given ideal $\J$ relative to $\I_{\C}$ in the Kat\v{e}tov order determines the relationship between the classes $\C$ and $\FinBW(\J)$.
Problems of this type have already been investigated in the literature, see for instance
\cite{Gosia}, \cite{MR4584767} and 
 \cite{alcantara-phd-thesis}.
Some of these results are as follows.
\begin{theorem}\ 
\label{thm:laisurhfkasd}
    \begin{itemize}
        \item $\Fin\otimes \Fin \leq_K\I$ $\iff$ $\FinBW(\I)$ coincides with the class of all finite spaces.
        \item The following conditions are equivalent.
        \begin{enumerate}
            \item $\BI  \leq_K\I$ and $\Fin\otimes\Fin\not\leq_K \I$.
            \item $\FinBW(\I)$ coincides with the class of all boring  spaces.
            \end{enumerate}
    \item 
$\conv\not\leq_K \I$
$\iff$
$\FinBW(\I)$ coincides with the class of all  compact metric spaces in the realm of metric spaces.
\end{itemize}
\end{theorem}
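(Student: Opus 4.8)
\emph{Standing conventions and a transfer lemma.} Throughout I work with Hausdorff spaces, so that sequential limits are unique; without some separation axiom the first bullet already fails (an infinite indiscrete space lies in every $\FinBW(\I)$). The engine for all three parts is the monotonicity of $\FinBW$ under the Kat\v{e}tov order: if $\J\leq_K\I$ is witnessed by $f\colon\dom(\I)\to\dom(\J)$, then $\FinBW(\I)\subseteq\FinBW(\J)$. I would prove this directly: given a sequence $(y_j)_j$ indexed by $\dom(\J)$, feed the reindexed sequence $(y_{f(i)})_i$ to the hypothesis $X\in\FinBW(\I)$, obtain a convergent subsequence on some $A\notin\I$, and check that $B:=f[A]\notin\J$ (else $A\subseteq f^{-1}[B]\in\I$) and that $(y_j)_{j\in B}$ converges to the same limit (the indices where it escapes a neighborhood of the limit are the $f$-image of a finite set). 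Taking $\J=\Fin$ gives $\FinBW(\I)\subseteq\FinBW(\Fin)=\{\text{sequentially compact spaces}\}$, used throughout. I also record that finite spaces lie in every $\FinBW(\I)$: a proper ideal containing $\Fin$ cannot cover $\omega$ by finitely many of its members, so by pigeonhole some value of a sequence in a finite space is attained on an $\I$-positive set, and the corresponding constant subsequence converges.

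\emph{First bullet.} By the previous paragraph finite spaces are always in $\FinBW(\I)$, so I must show that $\finxfin\leq_K\I$ forces every member of $\FinBW(\I)$ to be finite, and conversely. For the forward direction let $f\colon\dom(\I)\to\omega\times\omega$ witness $\finxfin\leq_K\I$, let $X$ be infinite, pick distinct points $(x_n)_{n\in\omega}$, and define the test sequence $s_i:=x_{c(i)}$ where $c(i)$ is the column of $f(i)$. If $(s_i)_{i\in A}$ converges with $A\notin\I$, then by Hausdorffness at most one column is infinite along $A$; splitting $A$ into the part inside that column (contained in some $f^{-1}[\{n_0\}\times\omega]\in\I$) and the remainder, on which $c$ is finite-to-one so that $f$ carries it into a set with all columns finite, i.e.\ an element of $\finxfin$, yields $A\in\I$, a contradiction. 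For the converse I use the convergent-sequence space $S=\{0\}\cup\{2^{-n}:n\in\omega\}$: a sequence in $S$ that is not constantly $0$ on an $\I$-positive set is coded by $g\colon\dom(\I)\to\omega$ via the value $2^{-g(i)}$, and it has an $\I$-positive convergent subsequence iff some fibre $g^{-1}(k)$ is $\I$-positive (limit $2^{-k}$) or some $\I$-positive $A$ has $g|_A\to\infty$ (limit $0$). If neither held for some $g$, then $i\mapsto(g(i),i)$ would witness $\finxfin\leq_K\I$: preimages of columns are the fibres $g^{-1}(n)\in\I$, while the preimage of any set meeting each column finitely meets each $g^{-1}(\{0,\dots,k\})$ in a finite set, hence carries $g$ to infinity and so lies in $\I$. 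Thus $\finxfin\not\leq_K\I$ places the infinite space $S$ in $\FinBW(\I)$.

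\emph{Third bullet.} In the metric realm one inclusion is free: members of $\FinBW(\I)$ are sequentially compact, and a sequentially compact metric space is compact, so every metric member of $\FinBW(\I)$ is compact metric regardless of $\I$. The content is the equivalence between $\conv\not\leq_K\I$ and the inclusion ``every compact metric space lies in $\FinBW(\I)$''. For the easy direction, a witness $f\colon\dom(\I)\to\Q\cap[0,1]$ of $\conv\leq_K\I$ is literally a sequence $(f(i))_i$ in the compact metric space $[0,1]$, each of whose convergent subsequences on a set $A$ has range accumulating at a single point and so sits inside a $\conv$-set $B$; then $A\subseteq f^{-1}[B]\in\I$, so $[0,1]\notin\FinBW(\I)$. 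For the hard direction I assume a compact metric $X$ and a sequence $(x_i)_i$ all of whose convergent subsequences are $\I$-small, and manufacture a reduction $\conv\leq_K\I$: after passing to the closure of the range and fixing a compatible metric, I would code the cluster structure of $(x_i)$ by a map into $\Q\cap[0,1]$ (assigning to $i$ a rational name for the location of $x_i$ at a suitable scale) so that the preimage of every set with finitely many limit points, i.e.\ every $\conv$-set, is an index set along which $(x_i)$ has a convergent subsequence, hence lies in $\I$.

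\emph{Second bullet and the main difficulty.} The clause $\finxfin\not\leq_K\I$ is handled by the first bullet: $\FinBW(\I)$ can equal the boring spaces only if it is not the class of finite spaces, which the first bullet identifies with $\finxfin\not\leq_K\I$, and conversely this clause keeps the (infinite) boring spaces from being collapsed away. It remains to match $\BI\leq_K\I$ with the bound $\FinBW(\I)\subseteq\{\text{boring spaces}\}$. The plan mirrors the first bullet: by monotonicity $\BI\leq_K\I$ gives $\FinBW(\I)\subseteq\FinBW(\BI)$, and one verifies $\FinBW(\BI)\subseteq\{\text{boring spaces}\}$ from the defining property of $\BI$; for the reverse, assuming $\BI\not\leq_K\I$ I would build, from the failure of every candidate reduction, a non-boring test space carrying a sequence whose only $\I$-small-indexed subsequences are trivial, thereby placing it in $\FinBW(\I)$ and contradicting $\FinBW(\I)=\{\text{boring spaces}\}$. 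I expect the main obstacle to be these converse constructions, and above all the hard direction of the third bullet, where a single $\I$-bad sequence in an \emph{arbitrary} compact metric space must be turned into a canonical reduction of $\conv$: the coding has to be uniform in the unknown limit points and insensitive to the choice of metric, and it is there, rather than in the monotonicity transfers, that the real work lies.
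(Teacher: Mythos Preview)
Your transfer lemma and the first bullet are correct and complete; this direct argument is more elementary than the paper's route, which derives the ideal case as a corollary (Corollary~\ref{cor:ksahdflkhsadflksad}) of the general partition-regular Theorem~\ref{thm:Finite-Spaces-via-FinSqrd} by specializing $\rho=\rho_\I$. Your $\omega+1$ test space and the map $i\mapsto(g(i),i)$ give exactly the combinatorial content of ``$\I\in P^-(\omega)$'' without naming it.

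The third bullet has a genuine error in the hard direction. You write that the preimage of a $\conv$-set ``is an index set along which $(x_i)$ has a convergent subsequence, hence lies in $\I$''. This inference is false: \emph{every} infinite index set yields a convergent subsequence in a compact space, so the implication would put all of $\omega$ into $\I$. The hypothesis is only that sets $A$ for which $(x_i)_{i\in A}$ \emph{itself} converges lie in $\I$. What actually makes the rational-approximation map $\phi$ (with $|\phi(i)-x_i|<2^{-i}$, $\phi$ injective into $\Q\cap[0,1]$) a Kat\v{e}tov witness is the following missing lemma: for every $A\notin\I$ the sequence $(x_i)_{i\in A}$ has \emph{infinitely many} cluster points. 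If it had only finitely many, say $p_1,\dots,p_k$, then disjoint $\varepsilon$-balls around the $p_j$ would split $A$ (up to a finite set) into $k$ pieces, one of which is $\I$-positive and indexes a sequence with a single cluster point, i.e.\ a convergent sequence, contradicting the hypothesis. With this lemma, $\phi[A]$ has infinitely many limit points for every $A\notin\I$, i.e.\ $\phi[A]\notin\conv$, which is exactly $\conv\leq_K\I$. The paper obtains the ideal case from the general $\rho$-version (Theorem~\ref{thm:oyiewrytosr} and Corollary~\ref{cor:skjdfhgsdf}); your direct route is viable once this lemma replaces the faulty step.

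The second bullet is too sketchy to count as a proof. Two specific pieces are missing. First, from $\Fin^2\not\leq_K\I$ you must show that \emph{every} boring space lies in $\FinBW(\I)$; the first bullet only gives you $\omega+1$. The argument (as in the paper's proof of Theorem~\ref{thm:Boring-Spaces-via-BI}) uses the structure of boring spaces as finite disjoint unions of one-point compactifications together with the $P^-(\omega)$ property you just established. Second, the converse needs a concrete non-boring test space: the right one is $\omega^2+1$, and you must prove that $\BI\not\leq_K\I$ implies $\omega^2+1\in\FinBW(\I)$ (this is the ideal case of Theorem~\ref{thm:rho-BI-vs-P2minus-vs-FinBW}(\ref{thm:rho-BI-vs-P2minus-vs-FinBW:item-P2minus-vs-FinBW})). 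Your phrase ``build \dots\ a non-boring test space carrying a sequence whose only $\I$-small-indexed subsequences are trivial'' describes the \emph{opposite} of what is needed --- you want every sequence to admit an $\I$-positive convergent subsequence --- and in any case no construction is given.
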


The definition of $\FinBW(\I)$ is based on  the notion of ordinary  convergence. However, there exist other types of convergence that fall outside this classical framework. Examples include convergences arising in ergodic theory and other areas, for instance, \emph{IP-convergence} (introduced by Furstenberg and Weiss \cite{MR531271})
and 
\emph{R-convergence} (considered in  \cite{MR4741322}, \cite{MR2948679} and \cite{MR3019575}).
Such convergences have been employed, for example, in \cite{MR1887003}, \cite{MR1950294} and \cite{MR4552506} to define some classes of topological spaces, namely \emph{Hindman spaces} and \emph{Ramsey spaces}.  
In general, these forms of convergence cannot be adequately described using the language of ordinary convergence alone.

More recently, in the paper \cite{FKK-Unified}, the authors introduced a unifying framework (\emph{partition regular functions}) which, in one formulation, can be interpreted in terms of ordinary convergence, while simultaneously encompassing these other, non-ordinary types of convergence.

Motivated by these developments, the aim of the present paper is to generalize the results of 
Theorem~\ref{thm:laisurhfkasd} for partition regular functions. This task is addressed in Sections~\ref{sec:lkasudhfaks}, \ref{sec:akjsfoias}
and \ref{sec:kajshfdlsa}.

In the final section, we address the problem of identifying the critical ideal for the class of functions possessing the property motivated by Mazurkiewicz’s theorem~\cite{Mazurkiewicz1932}, which states that every  uniformly bounded sequence of continuous functions has a uniformly convergent subsequence once restricted to some perfect set.


\section{Preliminaries}
\label{sec:prelim}

All topological spaces considered in the paper are assumed to be Hausdorff.
 
Recall that an ordinal number $\alpha$ is equal to the set of all ordinal numbers less than $\alpha$. 
In particular, the  smallest infinite ordinal number $\omega=\{0,1,\dots\}$ is equal to the set of all natural numbers $\N$, and each natural number $n = \{0,\dots,n-1\}$ is equal  to the set of all natural numbers less than $n$.
Using this identification, we can, for instance, write $n\in k$ instead of $n<k$, $n<\omega$ instead of $n\in \omega$, or $A\cap n$ instead of $A\cap \{0,1,\dots,n-1\}$. 

 We write  
$[A]^{<\omega}$ to denote the family of all finite subsets of $A$,
$[A]^\omega$ to denote the family of all infinite countable subsets of $A$
and
$\cP(A)$ to denote the family of all subsets of $A$.
We say that  $\cA\subseteq [\omega]^\omega$
is an \emph{almost
disjoint family on $\omega$} if $A\cap B$ is finite for any distinct $A,B\in \cA$.


\subsection{Ideals and partition regular functions}

An \emph{ideal} on a nonempty set $X$ is a  family $\I\subseteq\cP(X)$ that satisfies the following properties:
\begin{enumerate}
\item $\emptyset\in \I$ and $X\not\in\I$;
\item if $A,B\in \I$ then $A\cup B\in\I$;
\item if $A\subseteq B$ and $B\in\I$ then $A\in\I$;
\item $\I$ contains all finite subsets of $X$. 
\end{enumerate}
For an ideal $\I$,  we write $\I^+=\{A\subseteq X: A\notin\I\}$ and call it the \emph{coideal of $\I$}.
Sets belonging to $\I^+$ will be called \emph{$\I$-positive} sets.
An ideal $\I$ is \emph{tall} (also called \emph{dense}) if for every infinite $A\subseteq X$ there is an infinite $B\in\I$ such that $B\subseteq A$.

\begin{example}\  
\begin{enumerate}
    \item 
     $\fin$ is the ideal of all finite subsets of $\omega$. 

\item 
$\Fin^2= \Fin\otimes\Fin$ is the ideal on $\omega^2=\omega\times \omega$ defined by
\begin{equation*}
A\in \Fin^2 \iff \exists i_0\in\omega\, \forall i\geq i_0\, (|\{j\in\omega: (i,j)\in A\}|<\omega).
\end{equation*}

\item $\BI$ is the ideal  on $\omega^3 = \omega\times \omega \times\omega$
introduced in \cite[Definition~4.1]{MR4584767}
and defined by 
\begin{equation*}
    \begin{split}
A\in \BI 
\iff 
\exists i_0\in\omega\, \left[\right.
&
\forall i< i_0\, (\, \{(j,k)\in\omega^2 : (i,j,k)\in A\}\in \Fin^2)
\  \land 
\\&
\left.
\hspace{-3pt}\forall i\geq i_0\, (|\{(j,k)\in\omega^2 : (i,j,k)\in A\}|<\omega)
\right].
    \end{split}
\end{equation*}
The ideal $\BI$ is isomorphic to the ideal $\mathcal{WT}$ introduced in \cite{BRENDLE-CJM-2025}, where the authors showed its connection with weakly tight almost disjoint families.

\item 
$\conv$ is the ideal on $\Q\cap [0,1]$ 
consisting of all subsets of $\Q\cap [0,1]$ which can be covered by ranges of finitely many sequences in $\Q\cap [0,1]$ which are convergent in $[0,1]$.

\end{enumerate}\end{example}

\begin{definition}[\cite{FKK-Unified}]
\label{def:partition-regular}
Let $\Lambda$ and $\Omega$ be  countable infinite sets.
Let $\cF$ be a nonempty family of infinite subsets of $\Omega$ such that $F\setminus K\in \cF$ for every $F\in \cF$ and every finite set $K\subseteq \Omega$.
We say that a function $\rho\colon\cF\to [\Lambda]^\omega$ 
is \emph{partition regular} if it satisfies the following conditions:
\begin{description} 
\item[(M)]
$\forall E,F\in \cF\, \left( E\subseteq F \implies \rho(E)\subseteq \rho(F)\right)$;

\item[(R)]
$\forall F\in \cF\,\forall A,B \subseteq \Lambda \,(\rho(F) =  A\cup B  \implies  
\exists E\in \cF \,(\rho(E)\subseteq A \lor \rho(E)\subseteq B));$

\item[(S)]
$\forall E\in \cF\,\exists F\in \cF\,(F\subseteq E \land \forall a\in \rho(F)\,\exists K\in[\Omega]^{<\omega}\, (a\notin \rho(F\setminus K)))$.

\end{description}
\end{definition}

The following proposition reveals  relationships between partition
regular functions and ideals.

\begin{proposition}[{\cite[Proposition~3.3]{FKK-Unified}}]\
\label{prop:rho-versus-ideal}
\begin{enumerate}
\item If $\rho\colon\cF\to[\Lambda]^\omega$ is partition regular then 
$\I_{\rho} = \{A\subseteq \Lambda: \forall  F\in \cF\, (\rho(F)\not\subseteq A)\}$
is an ideal on $\Lambda$. 
\label{prop:rho-versus-ideal:rho-gives-ideal}

    \item 
If $\I$ is an ideal on $\Lambda$ then 
the function 
$\rho_{\I}\colon\I^+\to[\Lambda]^\omega$ given by $\rho_{\I}(A)=A$
is partition regular and  $\I=\I_{\rho_{\I}}.$ \label{prop:rho-versus-ideal:ideal-gives-rho} 
\end{enumerate}
\end{proposition}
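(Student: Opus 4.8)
The plan is to treat the two parts separately, since each reduces to checking a short list of defining conditions. For part~(1) I would verify the four ideal axioms for $\I_\rho$ one by one, and for part~(2) I would verify conditions \textbf{(M)}, \textbf{(R)} and \textbf{(S)} for $\rho_\I$ together with the identity $\I=\I_{\rho_\I}$. The whole statement is essentially a bookkeeping exercise; almost all of the work is immediate from the definitions, so my main concern would be keeping careful track of exactly which hypothesis is used where.

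For part~(1), three of the four axioms are nearly automatic. Since every value $\rho(F)$ is an infinite set, it is never contained in a finite set, so $\I_\rho$ contains $\emptyset$ and all finite subsets of $\Lambda$; and since $\cF$ is nonempty, choosing any $F\in\cF$ gives $\rho(F)\subseteq\Lambda$, so $\Lambda\notin\I_\rho$. Downward closure is pure set theory: if $A\subseteq B$ and $\rho(F)\subseteq A$, then $\rho(F)\subseteq B$. The only axiom with genuine content is closure under finite unions, and this is where condition \textbf{(R)} enters. Given $A,B\in\I_\rho$, I would argue by contradiction: if $\rho(F)\subseteq A\cup B$ for some $F\in\cF$, then setting $A'=A\cap\rho(F)$ and $B'=B\cap\rho(F)$ gives $\rho(F)=A'\cup B'$, so \textbf{(R)} produces $E\in\cF$ with $\rho(E)\subseteq A'\subseteq A$ or $\rho(E)\subseteq B'\subseteq B$, contradicting $A\in\I_\rho$ or $B\in\I_\rho$. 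I note in passing that conditions \textbf{(M)} and \textbf{(S)} play no role in this direction.

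For part~(2), I would first record that $\rho_\I$ is well defined: because $\I$ contains every finite set, each $\I$-positive set is infinite, so $\I^+\subseteq[\Lambda]^\omega$, and $\I^+$ is closed under removing finite sets (if $F\setminus K\in\I$ with $K$ finite, then $F\in\I$), so $\I^+$ qualifies as a domain family for a partition regular function. Conditions \textbf{(M)} and \textbf{(S)} are then trivial because $\rho_\I$ is the identity: \textbf{(M)} is the tautology $E\subseteq F\Rightarrow E\subseteq F$, and for \textbf{(S)} one takes $F=E$ and, for each $a\in E$, the finite set $K=\{a\}$. For \textbf{(R)}, if $F=A\cup B$ with $F\in\I^+$, then closure of $\I$ under unions forces $A\notin\I$ or $B\notin\I$, and the corresponding $\I$-positive set serves as the required $E$. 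Finally, the identity $\I=\I_{\rho_\I}$ follows from two quick inclusions: if $A\in\I$, then every $F\subseteq A$ lies in $\I$ by downward closure, so no $\I$-positive $F$ is contained in $A$ and hence $A\in\I_{\rho_\I}$; conversely, if $A\notin\I$, then $A$ is itself an $\I$-positive subset of $A$, so $A\notin\I_{\rho_\I}$.

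I do not expect a serious obstacle, as the proposition is a direct unpacking of the definitions. The one point that needs a moment's care is the closure-under-unions step in part~(1): condition \textbf{(R)} is stated with the equality hypothesis $\rho(F)=A\cup B$, whereas closure under unions naturally presents the inclusion $\rho(F)\subseteq A\cup B$. Intersecting $A$ and $B$ with $\rho(F)$ bridges this gap, and getting that reduction right is the only step I would flag as non-routine.
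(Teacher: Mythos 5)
Your proof is correct, and since the paper itself gives no proof of this proposition (it is cited from \cite[Proposition~3.3]{FKK-Unified}), your routine verification of the ideal axioms via condition \textbf{(R)} and of \textbf{(M)}, \textbf{(R)}, \textbf{(S)} for the identity function is exactly the standard argument intended. The one step you rightly flag as non-routine---passing from $\rho(F)\subseteq A\cup B$ to the equality hypothesis of \textbf{(R)} by intersecting with $\rho(F)$---is handled correctly, and your observation that \textbf{(M)} and \textbf{(S)} are not needed in part~(1) is accurate.
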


The above proposition shows that every ideal can be coded using partition regular functions, namely, for an ideal $\I$ we have a partition regular function $\rho = \rho_\I$ with $\I = \I_{\rho}$. On the other hand, not every partition regular function can be coded using ideals.
For instance,  $\FS\neq\rho_\I$ for any ideal $\I$ (\cite[Propositions~6.5 and 6.7]{FKK-Unified}), where 
the partition regular function 
$\FS: [\omega]^\omega\to [\omega]^\omega$ is given by 
\begin{equation*}
\FS(D)  = \{ d_0+\dots+d_n : n\in\omega,d_0,\dots,d_n\in D, d_i\neq d_j  \text{for } i\neq j\},
\end{equation*}
 and the ideal  
 \begin{equation*}
\Hindman = \I_{\FS} = \{A\subseteq \omega: \forall  D\in [\omega]^\omega\, (\FS(D)\not\subseteq A)\}
\end{equation*}
 is called \emph{the Hindman ideal} \cite[p.~109]{MR2471564} (see also \cite{MR1887003}).

A set $F\in \cF$ has \emph{small accretions} if 
$\rho(F)\setminus\rho(F\setminus K)\in\I_\rho$ for every finite set $K$. We say that  $\rho$  has \emph{small accretions} if for every $E\in \cF$, there is $F\in \cF$ with  $F\subseteq E$ such that  $F$ has small accretions.


\subsection{$P$-like properties}

In this section $\I$ is an ideal on~$\omega$, and 
$\rho\colon \cF\to[\Lambda]^\omega$ with $\cF\subseteq[\Omega]^\omega$ is  a  partition regular function.

We write:
\begin{itemize}
\item $\rho \in P^-(\Lambda) $ if for every decreasing sequence  $\Lambda=A_0\supseteq A_1\supseteq\dots $ such that 
$A_n\setminus A_{n+1}\in \I_\rho$ for each $n$  there exists 
$F \in \mathcal{F}$ such that for each $n\in \omega$ there exists a finite set $K$ with 
$\rho(F \setminus K) \subseteq A_n$.
\item $\rho \in P^- $ if for every decreasing sequence  $A_0\supseteq A_1\supseteq\dots $ such that $A_0\in \I_\rho^+$ and  
$A_n\setminus A_{n+1}\in \I_\rho$ for each $n$  there exists 
$F \in \mathcal{F}$ such that
$\rho(F)\subseteq A_0$ and 
for each $n\in \omega$ there exists a finite set $K$ with 
$\rho(F \setminus K) \subseteq A_n$.
\end{itemize}

If $\rho_\I\in P^-(\omega)$ ($\rho_\I\in P^-$) then we write $\I \in P^-(\omega)$ ($\I\in P^-$, respectively). Thus, equivalently, $\I \in P^-(\omega)$ if for every decreasing sequence  $\omega=A_0\supseteq A_1\supseteq\dots $ such that 
$A_n\setminus A_{n+1}\in \I_\rho$ for each $n$ 
there exists  $B\in \I^+$ such  that for each $n\in \omega$ there exists a finite set $K$ with 
$B\setminus K \subseteq A_n$. Similarly one can express $\I\in P^-$.

It is not difficult to show the following characterizations of $P^-$-like properties which we will sometimes used in the paper.

\begin{proposition}
    \ 
    \begin{enumerate}
\item 
$\I \in P^-(\omega)$ $\iff$ for every partition $\{A_n:n\in \omega\}$ of $\omega$ such that 
$A_n\in \I$ for each $n$, 
there exists  $B\in \I^+$ such that $B\cap A_n$ is finite for each $n\in \omega$.

    \item 
 $\rho \in P^-(\Lambda) $ $\iff$
for every  partition $\{A_n:n\in \omega\}$ of $\Lambda$ such that 
$A_n\in \I_\rho$ for each $n$, there exists 
$F \in \mathcal{F}$ such that for each $n\in \omega$ there exists a finite set $K$ with 
$\rho(F \setminus K) \cap A_n=\emptyset$.

\item 
$\I\in P^-$ $\iff$
for every  family  $\{A_n:n\in \omega\}$ of pairwise disjoint subsets of  $\omega$ such that $\bigcup \{A_n:n\in \omega\}\notin\I$ and 
$A_n\in \I$ for each $n$, there exists $B\subseteq \bigcup \{A_n:n\in \omega\}$  such that  
$B\in \I^+$
and 
$B \cap A_n$  is finite for each $n\in \omega$.

        \item 
$\rho\in P^-$ $\iff$
for every  family  $\{A_n:n\in \omega\}$ of pairwise disjoint subsets of  $\Lambda$ such that $\bigcup \{A_n:n\in \omega\}\notin\I_\rho$ and 
$A_n\in \I_\rho$ for each $n$, there exists 
$F \in \mathcal{F}$ such that for each $n\in \omega$ there exists a finite set $K$ with 
$\rho(F \setminus K) \cap A_n=\emptyset$.

\end{enumerate}
\end{proposition}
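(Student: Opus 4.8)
The plan is to prove all four equivalences through a single dictionary between decreasing sequences and pairwise disjoint families, and then to read off each case. Given a family $\{A_n:n\in\omega\}$ as on the right-hand side, I put $C_n=\bigcup_{m\ge n}A_m$, so that $C_0=\bigcup_{m}A_m$, the sequence $C_0\supseteq C_1\supseteq\cdots$ is decreasing, and $C_n\setminus C_{n+1}=A_n$ lies in the relevant ideal. Conversely, given a decreasing sequence $A_0\supseteq A_1\supseteq\cdots$ with $A_n\setminus A_{n+1}$ in the ideal, I put $B_n=A_n\setminus A_{n+1}$ and $B_\infty=\bigcap_n A_n$, and use $\{B_n:n\in\omega\}$ as the candidate family; the telescoping identity $\bigcup_{j<n}B_j=A_0\setminus A_n$ is what lets me pass back and forth.

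For the implications ``$\Rightarrow$'' I feed the sequence $C_n$ into the assumed $P^-$-type property. In the ideal cases (1) and (3) this yields $B\in\I^+$ with, for every $n$, a finite set $K$ such that $B\setminus K\subseteq C_{n+1}$; as $A_n\cap C_{n+1}=\emptyset$, this forces $B\cap A_n\subseteq K$ to be finite. In the $\rho$-cases (2) and (4) it yields $F\in\cF$ with $\rho(F\setminus K)\subseteq C_{n+1}$, hence $\rho(F\setminus K)\cap A_n=\emptyset$. In cases (1) and (2) the top set $C_0=\bigcup_m A_m$ is the whole space, matching the hypotheses of $P^-(\omega)$ and $P^-(\Lambda)$; in the ``$P^-$'' cases (3) and (4) it is merely positive by assumption, so the positivity requirement of $P^-$ is met and the extra conclusion $B\subseteq C_0$ (resp.\ $\rho(F)\subseteq C_0$) comes for free.

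For the converse implications I split on the tail $B_\infty$. If $B_\infty$ is positive it is itself a witness: in the ideal cases $B=B_\infty\in\I^+$ satisfies $B\subseteq A_n$ for all $n$, and in the $\rho$-cases positivity of $B_\infty$ supplies $F\in\cF$ with $\rho(F)\subseteq B_\infty\subseteq A_n$. If $B_\infty$ is small, then $\{B_n:n\in\omega\}$ (absorbing $B_\infty$ into $B_0$ in cases (1) and (2), so as to keep a partition of the whole space) is an admissible family, to which I apply the right-hand side. The crucial manoeuvre is to upgrade the pointwise conclusion ``for each $n$ there is a finite $K$'' to a uniform statement on an initial segment: by monotonicity (M) the finitely many sets $K_0,\dots,K_{n-1}$ may be merged into one $K$, so that $\rho(F\setminus K)$ is disjoint from $\bigcup_{j<n}B_j=A_0\setminus A_n$. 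In cases (1) and (2), where $A_0$ is the whole space, this already gives $\rho(F\setminus K)\subseteq A_n$ (resp.\ $B\setminus K\subseteq A_n$), finishing those cases.

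\textbf{The main obstacle} lies in the converse of the ``$P^-$'' cases (3) and (4): here $A_0$ is only positive, and the definition of $P^-$ additionally demands that the witness lie inside $A_0$. After the merging step one controls the witness only over $A_0$, so its part over $\Lambda\setminus A_0$ must be excluded. In case (3) this is exactly the purpose of the clause $B\subseteq\bigcup_m A_m$: since $\bigcup_m A_m=A_0\setminus B_\infty\subseteq A_0$, the witness automatically lands in $A_0$, and then $B\setminus A_n=B\cap\bigcup_{j<n}B_j$ is finite. For the partition-regular case (4) the analogous containment $\rho(F)\subseteq\bigcup_m A_m\subseteq A_0$ is the delicate point; I expect to obtain it by first choosing $E\in\cF$ with $\rho(E)\subseteq A_0$ (possible since $A_0\in\I_\rho^+$), applying the family condition to the traces $\{B_n\cap\rho(E)\}$ inside $\rho(E)$, and transferring the witness back by means of properties (M), (R) and the small-accretions property (S). Forcing the transferred $F$ to satisfy $\rho(F)\subseteq A_0$ is the step I anticipate to be hardest, precisely because the disjointness conclusion of the family condition does not by itself localise $\rho(F)$ within $\bigcup_m A_m$.
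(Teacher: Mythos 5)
The paper offers no proof of this proposition (it is asserted as routine), so your attempt must be judged on its own merits. Parts (1)--(3) and the forward half of (4) are correct: the dictionary $C_n=\bigcup_{m\geq n}A_m$ versus $B_n=A_n\setminus A_{n+1}$, $B_\infty=\bigcap_n A_n$, the case split on whether $B_\infty\in\I_\rho$, and the upgrading of the pointwise conclusion by merging $K_0,\dots,K_{n-1}$ via (M) are exactly what is needed; in (3) you also correctly isolate the role of the clause $B\subseteq\bigcup_m A_m$ in localising the witness inside $A_0$.

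The step you flag as hardest in the converse of (4) is, however, a genuine gap --- and it cannot be closed, because that implication is \emph{false as literally stated}: the right-hand side of (4), unlike that of (3), omits the containment $\rho(F)\subseteq\bigcup_n A_n$, and without it the witness can escape $\bigcup_n A_n$ entirely. Concretely, let $\Lambda=N\sqcup M$ with $N=\omega\times\omega$ and $M$ a disjoint copy of $\omega$, let $\I=\{A\subseteq\Lambda: A\cap N\in\Fin\otimes\Fin \ \text{and}\ A\cap M\in\Fin\}$, and take $\rho=\rho_\I$ (partition regular by Proposition~\ref{prop:rho-versus-ideal}). For \emph{every} admissible family $\{A_n\}$ the single set $F=M\in\I^+$ witnesses the right-hand side of (4), since $M\cap A_n\in\Fin$ for each $n$ by $A_n\in\I$; yet $\rho_\I\notin P^-$, as witnessed by the decreasing sequence $A_n=(\omega\setminus n)\times\omega$ with $A_0=N\in\I^+$: any $B\subseteq A_0$ almost contained in every $A_n$ has all columns finite, hence lies in $\I$ (the same computation as in Proposition~\ref{prop:P-minus-versus-P-minus}(1)). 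This also shows that your anticipated repair --- passing to the traces $\{B_n\cap\rho(E)\}$ for some $E$ with $\rho(E)\subseteq A_0$ and transferring back via (M), (R), (S) --- cannot work: the disjointness conclusions are vacuously satisfied by a witness living entirely inside $M$, whatever family they are applied to, so no choice of family forces localisation. The correct resolution is to add the clause $\rho(F)\subseteq\bigcup\{A_n:n\in\omega\}$ to the right-hand side of (4), mirroring $B\subseteq\bigcup_n A_n$ in (3); the forward direction still holds, since the definition of $P^-$ already yields $\rho(F)\subseteq C_0$, and your own machinery then finishes the converse verbatim: apply the strengthened condition to $\{B_0\cup B_\infty,B_1,B_2,\dots\}$, whose union is $A_0$, and merging via (M) gives $\rho(F\setminus K)\subseteq A_0\setminus\bigl((A_0\setminus A_n)\cup B_\infty\bigr)\subseteq A_n$ --- so the step you anticipated to be hardest simply disappears. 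The correction is harmless for the paper's later uses of (4) (e.g.\ in Proposition~\ref{prop:brand-new-rho}), where the constructed witnesses do satisfy the containment.
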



\subsection{Kat\v{e}tov order}

Let $\I_1$ and $\I_2$ be ideals on $X$ and $Y$, respectively.
We say that \emph{$\I_1$ is above $\I_2$ in the Kat\v{e}tov order} (in short: $\I_2\leq_K \I_1$)  if 
 there exists  a function $f\colon X\to Y$ 
 such that $f[A]\in \I_2^+$ for each $A\in \I_1^+$.

Let $\rho_i\colon\cF_i\to[\Lambda_i]^\omega$ 
be partition regular with $\cF_i\subseteq [\Omega_i]^\omega$ for each $i=1,2$.
We say that \emph{$\rho_1$ is above  $\rho_2$ in the Kat\v{e}tov order}
(in short: $\rho_2\leq_K \rho_1$)
if  there is a function $f\colon \Lambda_1\to\Lambda_2$
such that 
\begin{equation*}
\forall F_1\in \cF_1 \, \exists  F_2\in \cF_2 \, \forall K_1\in [\Omega_1]^{<\omega}\, \exists K_2\in [\Omega_2]^{<\omega}\,(\rho_2(F_2\setminus K_2)\subseteq f[\rho_1(F_1\setminus K_1)]).
\end{equation*}

We write $\rho_2 \approx_K \rho_1$ if both $\rho_2 \leq_K \rho_1$ and $\rho_1 \leq_K \rho_2$ hold.


\subsection{Stronger sparseness of partition regular functions}

In this section, we present a  strengthening of the property (S) from Definition~\ref{def:partition-regular} which will be required in some results of this paper.

\begin{definition}
\label{def:eS-one}
\label{def:eS-two}
Let 
$\rho\colon \cF\to[\Lambda]^\omega$ with $\cF\subseteq[\Omega]^\omega$ be a  partition regular function.
We say that 
\begin{enumerate}
    \item 
 $\rho \in (S_1) $ if 
 for every $E \in \mathcal{F}$ there exists $F \in \mathcal{F}$ with $F\subseteq E$ such that 
for every  $A \in \I_{\rho}$ there exists  $G \in \mathcal{F}$ with $\rho(G) \subseteq \rho (F) \setminus A$ such that 
\begin{equation*}
\forall K\in [\Omega]^{<\omega} \, \exists L\in [\Omega]^{<\omega} \, (\rho(G\setminus L) \subseteq \rho(F \setminus K));
\end{equation*}

    \item 
 $\rho \in (S_2) $ if 
 for every $E \in \mathcal{F}$ there exists $F \in \mathcal{F}$ with $F\subseteq E$ such that 
 for every $B \notin \I_{\rho}$ with $B \subseteq \rho(F)$ 
 there exists $G \in \mathcal{F}$ with $\rho(G) \subseteq B$
 such that 
\begin{equation*}
\forall K\in [\Omega]^{<\omega} \, \exists L \in [\Omega]^{<\omega} \, (\rho (G\setminus L) \subseteq \rho(F \setminus K)).
\end{equation*}

\end{enumerate}
\end{definition}

\begin{proposition} 
Let $\rho\colon  \mathcal{F} \to [\Lambda]^{\omega}$ with $\cF\subseteq [\Omega]^\omega$ be a partition regular function. 
\begin{enumerate}

\item 
 If $\rho \in (S_2)$ then $ \rho \in (S_1)$.

\item  $\rho_{\I} \in (S_2)$ for every ideal $\I$.
\item  If $\rho \in P^-$ and has small accretions, then $ \rho \in (S_2)$.

\end{enumerate}
\end{proposition}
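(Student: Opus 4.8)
The plan is to prove the three implications of the proposition in order, with most of the structural work falling on parts (2) and (3).

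\medskip

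\textbf{Part (1): $(S_2)\implies(S_1)$.} This should be the easiest. Given $E\in\cF$, I would take the $F\in\cF$ with $F\subseteq E$ witnessing $(S_2)$, and claim this same $F$ witnesses $(S_1)$. Indeed, fix any $A\in\I_\rho$. Since $A\in\I_\rho$, the definition of $\I_\rho$ in Proposition~\ref{prop:rho-versus-ideal}\eqref{prop:rho-versus-ideal:rho-gives-ideal} yields that $\rho(E')\not\subseteq A$ for every $E'\in\cF$; in particular $\rho(F)\not\subseteq A$, so $B:=\rho(F)\setminus A$ is a proper nonempty subset of $\rho(F)$. The key point is that $B\notin\I_\rho$: this follows because $\rho(F)\in\I_\rho^+$ (as $\rho(F)$ contains $\rho(F)$, so is itself $\rho$-positive) while $\rho(F)\cap A\subseteq A\in\I_\rho$, and $\I_\rho$ is an ideal closed under the operation $\rho(F)=(\rho(F)\setminus A)\cup(\rho(F)\cap A)$. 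Now apply $(S_2)$ to this $B$ to get $G\in\cF$ with $\rho(G)\subseteq B=\rho(F)\setminus A$ and the required tail-absorption condition, which is exactly what $(S_1)$ demands.

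\medskip

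\textbf{Part (2): $\rho_\I\in(S_2)$.} Here $\cF=\I^+$, $\Omega=\Lambda=\omega$, and $\rho_\I$ is the identity on $\I^+$. Given $E\in\I^+$, the natural choice is $F=E$ (no shrinking is needed because the identity has no ``accretion'' structure to exploit beyond finite modifications). Then for any $B\notin\I$ with $B\subseteq\rho_\I(F)=E$, simply take $G=B$: since $B\in\I^+=\cF$ and $\rho_\I(G)=B\subseteq B$, the containment is trivial. For the tail condition, given $K\in[\omega]^{<\omega}$ I would set $L=K$, so that $\rho_\I(G\setminus L)=B\setminus K\subseteq E\setminus K=\rho_\I(F\setminus K)$ because $B\subseteq E$. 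This verifies $(S_2)$ directly.

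\medskip

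\textbf{Part (3): $\rho\in P^-$ and small accretions $\implies\rho\in(S_2)$.} This is where the real argument lies and where I expect the main obstacle. Fix $E\in\cF$. Using small accretions, choose $F\subseteq E$ in $\cF$ with $\rho(F)\setminus\rho(F\setminus K)\in\I_\rho$ for every finite $K$. Now fix $B\notin\I_\rho$ with $B\subseteq\rho(F)$; I must produce $G\in\cF$ with $\rho(G)\subseteq B$ and the tail-absorption property. The idea is to split the complement $\rho(F)\setminus B$ together with the accretion increments into a countable family and feed it into the $P^-$ property. Concretely, fix an increasing sequence of finite sets $K_0\subseteq K_1\subseteq\cdots$ exhausting $\Omega$ (or at least exhausting $F$ in the relevant sense), and consider the increments $\rho(F\setminus K_n)\setminus\rho(F\setminus K_{n+1})$, each of which lies in $\I_\rho$ by small accretions. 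Together with $\rho(F)\setminus B$ (which is in $\I_\rho$ since $B$ is $\rho$-positive and $B\subseteq\rho(F)$), these form a countable family of $\I_\rho$-small pieces whose union need not cover $B$. The plan is to apply the characterization of $P^-$ from the earlier proposition to the pairwise disjoint family obtained from these increments restricted to $B$: since $B\notin\I_\rho$, the $P^-$ property yields some $H\in\cF$ with $\rho(H)$ meeting each piece only finitely, which translates — via the definition of $\I_\rho$ and property (M) — into $\rho(H)$ being almost contained in every tail $\rho(F\setminus K_n)$ and essentially contained in $B$. The delicate part will be bookkeeping: one must intersect with $B$ so that $\rho(G)\subseteq B$ holds exactly (not just modulo $\I_\rho$), and one must convert the ``finite intersection with each increment'' conclusion of $P^-$ into the uniform statement ``$\forall K\,\exists L\,(\rho(G\setminus L)\subseteq\rho(F\setminus K))$''. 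I expect to pass from $H$ to a final $G\in\cF$ with $\rho(G)\subseteq\rho(H)\cap B$ using property (R) (partition regularity) applied to the decomposition $\rho(H)=(\rho(H)\cap B)\cup(\rho(H)\setminus B)$, noting $\rho(H)\setminus B\subseteq\rho(F)\setminus B\in\I_\rho$ forces $\rho(G)\subseteq\rho(H)\cap B\subseteq B$ for a suitable $G$. The main obstacle is aligning the finite sets $L$ coming from the tail structure of $\rho$ with the finitely-many-intersections conclusion of $P^-$, so that a single $G$ simultaneously absorbs into all tails $\rho(F\setminus K)$.
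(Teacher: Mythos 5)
Parts (1) and (2) of your proposal are correct and essentially identical to the paper's own arguments (in (1) the paper likewise takes the $(S_2)$-witness $F$ and applies $(S_2)$ to $B=\rho(F)\setminus A$; in (2) it takes $F=E$, $G=B$, $L=K$). Part (3), however, rests on a false claim: you assert $\rho(F)\setminus B\in\I_\rho$ ``since $B$ is $\rho$-positive and $B\subseteq\rho(F)$''. Positivity of $B$ says nothing about its relative complement: for $\rho=\rho_{\Fin}$, $F=\omega$ and $B$ the set of even numbers, both $B$ and $\rho(F)\setminus B$ are positive. This claim is load-bearing twice in your sketch — once when you add $\rho(F)\setminus B$ to the family fed into the disjoint-family characterization of $P^-$ (which requires every piece to lie in $\I_\rho$), and once when you invoke it to force the (R)-split $\rho(H)=(\rho(H)\cap B)\cup(\rho(H)\setminus B)$ to come out on the $B$ side. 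Without it, the branch $\rho(G)\subseteq\rho(H)\setminus B$ cannot be excluded.

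Independently of that, routing the argument through the disjoint-family characterization of $P^-$ discards exactly the information you need. As stated in the paper, that characterization only produces $H\in\cF$ whose tails $\rho(H\setminus M)$ avoid each piece; it gives no containment $\rho(H)\subseteq B$, nor $\rho(H)\subseteq\rho(F)$, so ``avoiding the increments'' does not convert into $\rho(H\setminus M)\subseteq\rho(F\setminus K_n)$. Its hypothesis $\bigcup_n A_n\notin\I_\rho$ can also fail for your increment family, since small accretions does not force $B\setminus\bigcap_n\rho(F\setminus K_n)$ to be positive. And a final pass from $H$ to $G$ via (R) destroys the tail condition: (R) yields only $\rho(G)$ inside one side of a partition, with no relation between tails of $\rho(G)$ and tails of $\rho(H)$. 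All of these problems vanish if you apply the decreasing-sequence \emph{definition} of $P^-$ directly, which is what the paper does: enumerate $\Omega=\{k_i:i\in\omega\}$, set $K_n=\{k_i:i<n\}$ and $A_n=\rho(F\setminus K_n)\cap B$; then $A_0=B\notin\I_\rho$, and $A_n\setminus A_{n+1}\subseteq\rho(F)\setminus\rho(F\setminus K_{n+1})\in\I_\rho$ by small accretions, so $P^-$ hands you $G\in\cF$ with $\rho(G)\subseteq A_0=B$ \emph{and} finite sets $M_n$ with $\rho(G\setminus M_n)\subseteq A_n\subseteq\rho(F\setminus K_n)$. An arbitrary finite $K$ satisfies $K\subseteq K_i$ for some $i$, and (M) gives $\rho(F\setminus K_i)\subseteq\rho(F\setminus K)$, so $L=M_i$ witnesses the tail condition — no case distinction, no (R), and no appeal to smallness of $\rho(F)\setminus B$.
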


\begin{proof}
(1)
Take any $E \in \mathcal{F}$. 
Let  $F \in \mathcal{F}$ be a witness for $\rho\in (S_2)$.
 If $A\in \I_{\rho}$ then $B=\rho(F)\setminus A \notin \I_{\rho} $, so the set $G$ from the definition of $(S_2)$ works for $(S_1)$.

    (2) Take any $E\in \I^+$, put $F=E$ and fix any $\I$-positive subset $B$ of $E$. Then $G=B$ works, as $B \setminus K\in \I^+$ and $B \setminus K \subseteq E\setminus K$ for all finite sets $K$.

    (3) 
    Enumerate the set $\Omega=\{k_i :i \in \omega\}$ and define $ K_n=\{k_i : i<n \} $. Take any $E \in \mathcal{F}$. Since $\rho$ has small accretions, there exists  some $F\in \mathcal{F}, F \subseteq E$ such that $\rho(F)\setminus \rho(F \setminus K_n) \in \I_{\rho}$ for every $n\in \omega$. Take $B \notin \I_{\rho}$ with $B\subseteq\rho(F)$ and define  $A_n=\rho(F \setminus K_n) \cap B $ for each $n\in \omega$. Then, $(A_n)_{n\in \omega}$ is a decreasing sequence such that $A_n\setminus A_{n+1} \in \I_{\rho}$ and $A_0\notin\I_\rho$. Since $\rho \in P^-$, there exists a set $G\in \mathcal{F} $ such that $\rho(G)\subseteq A_0$ and for every $n \in \omega$ there exists a finite set $M_n$ with $\rho(G \setminus M_n) \subseteq A_n$. Then,  $\rho(G) \subseteq B$ and for every $i\in \omega$, we get that $\rho(G\setminus M_i)\subseteq \rho(F \setminus K_i)$. 
\end{proof}


\subsection{Some classes of topological spaces}

A sequentially compact space $X$ is called \emph{boring} (\cite[Definition~3.21]{MR4584767}) if there exists a finite set $F\subseteq X$ such that each one-to-one sequence in $X$ converges to some point of  $F$. In case of infinite Hausdorff spaces, $X$ is boring if and only if it is a disjoint union of finitely many one-point compactifications
of discrete spaces.

For an ideal $\I$ on $\omega$, we write $\FinBW(\I)$ to denote the class of all topological spaces~$X$ such that for every sequence $f\colon \omega\to X$ there exists $A\in \I^+$ such that $f\restriction A$ converges. 
We say that an ideal $\I$ has the \emph{$\FinBW$ property} if $[0,1]\in \FinBW(\I)$.

For a partition regular function $\rho\colon \cF\to[\Lambda]^\omega$, 
we define  $\FinBW(\rho)$ to be the class of all topological spaces $X$ such that for every sequence $f \colon \Lambda \to X$ there exists $F \in \mathcal{F}$ such that $f \restriction \rho(F)$ \emph{$\rho$-converges} i.e. there exists $x\in X$ such that 
for every neighborhood $U$ of $x$ there is a finite set $K$ with 
$f[\rho(F\setminus K)] \subseteq U$.


\section{Critical \texorpdfstring{$\rho$}{rho} for the class of all finite spaces}
\label{sec:lkasudhfaks}

\begin{theorem}
\label{thm:Finite-Spaces-via-FinSqrd}
Let $\rho : \mathcal{F} \to [\Lambda]^{\omega}$ be partition regular
with $\cF\subseteq[\Omega]^\omega$.
If  $\rho \in (S_1)$, then the following conditions are equivalent:
    \begin{enumerate}
        \item  $\FinBW(\rho)$ coincides with the class of all finite spaces,\label{thm:Finite-Spaces-via-FinSqrd:item-FinBW}

\item $\omega+1\notin \FinBW(\rho)$, where the ordinal $\omega+1$ is considered as a topological space with  the order topology,\label{thm:Finite-Spaces-via-FinSqrd:item-omega-plus-one}

\item $\rho \notin P^-(\Lambda)$,\label{thm:Finite-Spaces-via-FinSqrd:item-Pminus} 
        
        \item $\rho_{\Fin^2} \leq_K \rho$.\label{thm:Finite-Spaces-via-FinSqrd:item-rho-FinSqrd}
        \end{enumerate}
The assumption  $\rho\in (S_1)$  is only used to prove the implication $(\ref{thm:Finite-Spaces-via-FinSqrd:item-Pminus}) \implies (\ref{thm:Finite-Spaces-via-FinSqrd:item-rho-FinSqrd})$.  
\end{theorem}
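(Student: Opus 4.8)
The plan is to establish the cycle $(1)\Rightarrow(2)\Rightarrow(3)\Rightarrow(4)\Rightarrow(1)$ among the four conditions. The implication $(1)\Rightarrow(2)$ is immediate, since $\omega+1$ is an infinite space and so cannot belong to $\FinBW(\rho)$ once that class consists of finite spaces only. For $(4)\Rightarrow(1)$ I would first record a monotonicity principle: if $\rho_2\leq_K\rho_1$ then $\FinBW(\rho_1)\subseteq\FinBW(\rho_2)$. Indeed, given a witnessing $f\colon\Lambda_1\to\Lambda_2$, a space $X\in\FinBW(\rho_1)$, and a sequence $g\colon\Lambda_2\to X$, one applies $\rho_1$-convergence to $g\circ f$ to get $F_1$ and a limit $x$, and then transports $x$ through the Katětov inequality to obtain $F_2$ with $g\restriction\rho_2(F_2)$ $\rho_2$-convergent to $x$. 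Applying this with $\rho_2=\rho_{\Fin^2}$ yields $\FinBW(\rho)\subseteq\FinBW(\rho_{\Fin^2})=\FinBW(\Fin^2)$ (as $\rho_{\Fin^2}$-convergence is ordinary convergence along $(\Fin^2)^+$-sets), which is the class of finite spaces by the first item of Theorem~\ref{thm:laisurhfkasd}. Since every finite, hence discrete, space lies in $\FinBW(\rho)$ by iterating property (R) on the fibres of a sequence, equality follows.

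For $(2)\Rightarrow(3)$ I argue contrapositively, proving $\rho\in P^-(\Lambda)\Rightarrow\omega+1\in\FinBW(\rho)$. Fix $g\colon\Lambda\to\omega+1$. If some fibre $g^{-1}(k)$ is $\I_\rho$-positive, then by definition of $\I_\rho$ there is $F\in\mathcal F$ with $\rho(F)\subseteq g^{-1}(k)$, on which $g$ is constant and hence $\rho$-convergent. Otherwise every fibre lies in $\I_\rho$, so the sets $A_n=g^{-1}(\{m:m\ge n\}\cup\{\omega\})$ form a decreasing chain with $A_0=\Lambda$ and $A_n\setminus A_{n+1}=g^{-1}(\{n\})\in\I_\rho$. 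Applying $P^-(\Lambda)$ produces $F\in\mathcal F$ whose tails $\rho(F\setminus K)$ are eventually contained in each $A_n$; since the $A_n$ are exactly the preimages of a neighbourhood base of $\omega$, this is precisely $\rho$-convergence of $g\restriction\rho(F)$ to $\omega$.

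The main obstacle is $(3)\Rightarrow(4)$, the only place where $\rho\in(S_1)$ is used. From $\rho\notin P^-(\Lambda)$ I fix, via the partition characterization, a partition $\{A_n\}$ of $\Lambda$ into $\I_\rho$-sets such that every $F\in\mathcal F$ has a \emph{bad column}: an index $n$ with $\rho(F\setminus K)\cap A_n\neq\emptyset$ for all finite $K$. I fix an injection $f\colon\Lambda\to\omega^2$ sending each $A_n$ into the column $\{n\}\times\omega$, and claim $f$ witnesses $\rho_{\Fin^2}\leq_K\rho$. Fixing $F\in\mathcal F$, I replace it by a sparse subset supplied by (S) and then by the set $F^*$ given by $(S_1)$ (sparseness passing to subsets via (M)), so that $F^*$ is sparse and can avoid any $\I_\rho$-set while keeping its tails inside those of $F^*$. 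Iterating $(S_1)$ against the growing $\I_\rho$-sets $A_{m_0}\cup\dots\cup A_{m_{i-1}}$ and re-sparsifying with (S), I obtain sparse $G_i\in\mathcal F$ with $\rho(G_i)\subseteq\rho(F^*)\setminus(A_{m_0}\cup\dots\cup A_{m_{i-1}})$, tails inside those of $F^*$, and a bad column $m_i$ that is necessarily new (it avoids the removed columns) and, by sparseness, necessarily \emph{infinite}, since a finite bad column would force a point lying in every tail of $G_i$, contradicting sparseness.

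It remains to assemble a single $F_2\in(\Fin^2)^+$. Enumerating $\Omega=\{k_0,k_1,\dots\}$ with tails $K_l=\{k_0,\dots,k_{l-1}\}$ and fixing a bijection $s\mapsto(i_s,j_s)$ of $\omega$ onto $\omega\times\omega$, I choose at step $s$ a point $\lambda_{i_s,j_s}\in A_{m_{i_s}}\cap\rho(F^*\setminus K_s)$ distinct from all earlier choices; this is possible because, after pulling the tail $K_s$ back through $G_{i_s}$, the infinite bad column $m_{i_s}$ still meets $\rho(F^*\setminus K_s)$ in an infinite set, by the sparseness extraction. Setting $S=\{\lambda_{i,j}\}$ and $F_2=f[S]$, every column $m_i$ of $F_2$ is infinite for infinitely many distinct $m_i$, so $F_2\in(\Fin^2)^+$; moreover $\lambda_{i_s,j_s}\in\rho(F^*\setminus K_s)\subseteq\rho(F^*\setminus K_l)$ whenever $s\ge l$, so $S$ is almost contained in every tail, giving $F_2\setminus K_2\subseteq f[\rho(F^*\setminus K)]\subseteq f[\rho(F\setminus K)]$ with $K_2=f[S\setminus\rho(F^*\setminus K)]$ finite. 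This is exactly the required Katětov witness, closing the cycle. The delicate point is the simultaneous bookkeeping of three demands on $S$ — infinitely many columns, infinitely many points per column, and eventual containment in every tail — which is precisely what forces $(S_1)$ rather than the weaker property (S).
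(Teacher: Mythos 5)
Your proposal is correct and takes essentially the same route as the paper's proof: the cycle $(1)\Rightarrow(2)\Rightarrow(3)\Rightarrow(4)\Rightarrow(1)$, with $(2)\Rightarrow(3)$ handled via the fibre partition and convergence to $\omega$, $(4)\Rightarrow(1)$ via Kat\v{e}tov monotonicity of $\FinBW$, and the key implication $(3)\Rightarrow(4)$ carried out exactly as in the paper — a partition witnessing $\rho\notin P^-(\Lambda)$, an injection sending $A_n$ into the column $\{n\}\times\omega$, and an $(S_1)$-driven induction producing sparse sets that avoid previously used columns, whose fresh infinite bad columns assemble into a $\Fin^2$-positive set almost contained in every tail image. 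Your diagonal enumeration $s\mapsto(i_s,j_s)$ with one point chosen in $\rho(F^*\setminus K_s)$ per step is merely a bookkeeping variant of the paper's choice of the points $a^l_n\in\rho(G_l\setminus L_{K_{l+n},\bigcup_{i<l}A_{n_i}})$.
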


\begin{proof} 
$(\ref{thm:Finite-Spaces-via-FinSqrd:item-FinBW})\iff(\ref{thm:Finite-Spaces-via-FinSqrd:item-Pminus})$ This is proved in \cite[Theorem~10.5(3)]{FKK-Unified}.


$(\ref{thm:Finite-Spaces-via-FinSqrd:item-FinBW})\implies(\ref{thm:Finite-Spaces-via-FinSqrd:item-omega-plus-one})$ This is obvious.


$(\ref{thm:Finite-Spaces-via-FinSqrd:item-omega-plus-one})\implies(\ref{thm:Finite-Spaces-via-FinSqrd:item-Pminus})$ 
Let $f:\Lambda\to\omega+1$ be a witness for $\omega+1\notin \FinBW(\rho)$.
We define $A_\alpha=f^{-1}(\alpha)$ for each $\alpha\in \omega+1$.
Once, we show that  the family $\{A_\alpha:\alpha\in \omega+1\}$ is a witness for $\rho\notin\ P^-(\Lambda)$, the proof will be finished. Observe that $A_\alpha\in\I_\rho$ for all $\alpha\in \omega+1$ as otherwise we could find $F\in\cF$ and $\alpha\in \omega+1$ with $\rho(F)\subseteq A_\alpha$ such that $f\restriction \rho(F)$ would be constant, so $\rho$-convergent. Suppose for the sake of contradiction that there is $F\in \cF$ such that for each $\alpha$ there is a finite set $K_\alpha$ with $\rho(F\setminus K_\alpha)\cap A_\alpha=\emptyset$.
We claim that $f\restriction \rho(F)$ is $\rho$-convergent to the point $\omega$ (and that will finish the proof). 
Let $U$ be a neighborhood of the point $\omega$. Then, there is $n\in \omega$ with $(n,\omega]\subseteq U$.
Then,  $K = \bigcup_{\alpha\leq n}K_\alpha$ is finite and $\rho(F\setminus K)\cap A_\alpha=\emptyset$ for each $\alpha\leq n$, so $f[\rho(F\setminus K)]\subseteq U$.


    $(\ref{thm:Finite-Spaces-via-FinSqrd:item-Pminus}) \implies (\ref{thm:Finite-Spaces-via-FinSqrd:item-rho-FinSqrd})$ 
Since $\rho \notin P^-(\Lambda)$,  there exists a partition $\{A_n:n\in \omega\}$ of $\Lambda$ such that $A_n \in \I_{\rho}$ for each $n \in \omega $ and 
for every $F\in\mathcal{F}$ there exists $n \in \omega$
such that $\rho(F \setminus L) \cap A_n \neq \emptyset$
for any finite set $L$.
Let $f: \Lambda \to \omega^2$ be an injective function such that $f[A_n] \subseteq \{n\}\times\omega$ for each $n$. 
Once we show that  $f$ is a  witness for 
$\rho_{\Fin^2} \leq_K \rho$, the proof will be finished.

   Take any $F \in \mathcal{F}$. 
   To finish the proof, we need to find $B\notin \Fin^2$ such that for each finite $K$ there is a finite $L$ with 
   $B\setminus L\subseteq f[\rho(F\setminus K)]$.

   Since $\rho \in (S_1)$,  there exists $E\in \mathcal{F}$ with $E\subseteq F$ 
   such that 
for every $A\in \I_{\rho}$ there exists  $H_A \in \mathcal{F}$ with $\rho(H_A) \subseteq \rho(E) \setminus A$ such that 
\begin{equation*}
\forall K\in [\Omega]^{<\omega} \ \exists L_{K,A}\in [\Omega]^{<\omega} \, (\rho(H_A\setminus L_{K,A}) \subseteq \rho (E\setminus K)).
\end{equation*}

Let $\Omega=\{k_i :i \in \omega\}$ and  $ K_n=\{k_i : i<n \} $ for each $n\in \omega$. 
We will construct $G_l \in \mathcal{F}$, $\{a_n^l:n\in \omega\}\subseteq  \Lambda$ and $ n_l \in \omega $ by the induction on $l \in \omega$ in such a way that the following conditions are satisfied:
\begin{enumerate}
\item  $G_l \subseteq H_{\bigcup_{i<l}A_{n_i}}$ for each $l$, 
\item  $a_n^l \in \rho(G_l \setminus L_{K_{l+n},\bigcup_{i<l}A_{n_i}} ) \cap A_{n_l}$ for each $l$ and $n$, 
\item  $a_n^l \neq a_k^l$ for each $l$ and distinct $n,k$,
\item  $n_l \neq n_k$ for distinct $l,k$.  
\end{enumerate}

Assume $G_j \in \mathcal{F}$, $\{a_n^j:n\in \omega\}\subseteq  \Lambda$ and $n_j \in \omega $ have already been constructed for $j<l$.
Since  $H_{\bigcup_{i<l}A_{n_i}} \in \mathcal{F}$, we can find $G_l \in \mathcal{F}$
such that 
$G_l \subseteq H_{\bigcup_{i<l}A_{n_i}}$
and for every $a\in \rho(G_l)$ there is a finite set $K$ with $a\notin \rho(G_l\setminus K)$.
Let $n_l$ be such that  $\rho(G_l\setminus L) \cap A_{n_l} \neq \emptyset$ for every finite $L$. Note that $n_l\neq n_i$ for all $i<l$ as $\rho(G_l\setminus L)\subseteq\rho(G_l)\subseteq\rho(H_{\bigcup_{i<l}A_{n_i}})\subseteq \rho(E)\setminus \bigcup_{i<l}A_{n_i}$.
Now, we will inductively pick elements $a^l_n$ for $n\in \omega$.
Suppose we have already chosen $a_i^l$ for $i<n$. 
 For each $i<n$, we can find a finite set  $M_i^l$ such that 
$a_i^l \notin \rho(G_l \setminus (L_{K_{l+n}, \bigcup_{i<l}A_{n_i}} \cup M_i^l)$. 
Now we pick  any 
\begin{equation*}
a_n^l \in \rho\left(G_l \setminus \left( L_{K_{l+n}, \bigcup_{i<l}A_{n_i}} \cup \bigcup_{i<n} M_i^l\right)\right) \cap A_{n_l}.
\end{equation*}
 That ends the inductive construction. 

Let $B=\{f(a_{n}^l) :l,n \in \omega \}$. 
Since $f$ is injective and $f(a^l_n)\in\{n_l\}\times \omega$ for each $l,n\in \omega$ (by $a^l_n\in A_{n_l}$), we obtain that 
$B\cap (\{n_l\} \times \omega)$ is infinite for  each $l \in \omega$.  Thus, $B\notin \Fin^2$.
Let's take any finite set $K$.  
Let $j_0$ be such that $K\subseteq K_{j_0}$.
Let $L=\{f(a_n^j) : n,j<j_0 \}$.
The set  $L$ is finite, so the proof will be finished once we show that  $B\setminus L\subseteq f[\rho(F\setminus K)]$.
Take any $f(a_n^l)\in B\setminus L$.
Then, $l\geq j_0$ or $n\geq j_0$, so $l+n\geq j_0$, and consequently we have 
\begin{equation*}
\begin{split}   
a^l_n 
&\in 
\rho(G_l \setminus L_{K_{l+n},\bigcup_{i<l}A_{n_i}}) 
\subseteq  
\rho(H_{\bigcup_{i<l}A_{n_i}} \setminus L_{K_{l+n},\bigcup_{i<l}A_{n_i}}) 
\\&\subseteq 
\rho(E\setminus K_{l+n} ) 
\subseteq
\rho(E\setminus K_{j_0} ) 
\subseteq 
\rho(F\setminus K), 
\end{split}    
\end{equation*}
so $f(a_n^l)\in f[ \rho(F\setminus K)]$.


$(\ref{thm:Finite-Spaces-via-FinSqrd:item-rho-FinSqrd})\implies (\ref{thm:Finite-Spaces-via-FinSqrd:item-FinBW})$ 
It is easy to see that each finite space is $\FinBW(\rho)$ for every partition regular function $\rho$. If $\rho_{\Fin^2} \leq_K \rho $, then $\FinBW(\rho) \subseteq \FinBW(\rho_{\Fin^2})  =\FinBW(\Fin^2)$
by \cite[Theorem~11.1 and Proposition~10.2]{FKK-Unified}, and $\FinBW(\Fin^2)$  coincides with the class of all finite spaces by \cite[Theorem~6.5]{MR4584767}. 
\end{proof}

\begin{corollary}[{\cite[Theorem~6.5 and Proposition~2.5]{MR4584767}}]
\label{cor:ksahdflkhsadflksad}
    Let  $\I$ be an ideal on $\omega$. The following conditions are equivalent.
    \begin{enumerate}
        \item  $\FinBW(\I)$  coincides with the class of all finite spaces.
        \item $\I \not\in P^-(\omega).$
        \item $\Fin^2 \leq_K \I.$
        \end{enumerate}
\end{corollary}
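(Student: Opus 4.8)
The plan is to obtain Corollary~\ref{cor:ksahdflkhsadflksad} as the special case $\rho=\rho_{\I}$ of Theorem~\ref{thm:Finite-Spaces-via-FinSqrd}. First I would check that the hypotheses of the theorem are met. By Proposition~\ref{prop:rho-versus-ideal}(\ref{prop:rho-versus-ideal:ideal-gives-rho}), the function $\rho_{\I}\colon\I^+\to[\omega]^\omega$ given by $\rho_{\I}(A)=A$ is partition regular with $\Lambda=\Omega=\omega$, $\cF=\I^+$ and $\I_{\rho_{\I}}=\I$; and by the proposition relating $(S_1)$ and $(S_2)$, we have $\rho_{\I}\in(S_2)\subseteq(S_1)$, so the standing assumption $\rho\in(S_1)$ is automatic for functions coming from ideals. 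Thus conditions (\ref{thm:Finite-Spaces-via-FinSqrd:item-FinBW}), (\ref{thm:Finite-Spaces-via-FinSqrd:item-Pminus}) and (\ref{thm:Finite-Spaces-via-FinSqrd:item-rho-FinSqrd}) of Theorem~\ref{thm:Finite-Spaces-via-FinSqrd} are equivalent for $\rho=\rho_{\I}$ (condition (\ref{thm:Finite-Spaces-via-FinSqrd:item-omega-plus-one}) is simply dropped), and it remains to match these three with the three conditions of the corollary.

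Two of the translations are immediate from the definitions. Condition (\ref{thm:Finite-Spaces-via-FinSqrd:item-Pminus}) reads $\I\notin P^-(\omega)$, since $\rho_{\I}\in P^-(\omega)$ was \emph{defined} to mean $\I\in P^-(\omega)$. For condition (\ref{thm:Finite-Spaces-via-FinSqrd:item-FinBW}) I would check that $\FinBW(\rho_{\I})=\FinBW(\I)$: because $\rho_{\I}(A)=A$, the sequence $f\restriction A$ is $\rho_{\I}$-convergent exactly when it is convergent in the ordinary sense (for each neighbourhood $U$ of the limit, $f[A\setminus K]\subseteq U$ for some finite $K$), so the two classes of spaces literally coincide; this is also recorded in \cite{FKK-Unified}. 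Hence (\ref{thm:Finite-Spaces-via-FinSqrd:item-FinBW}) becomes ``$\FinBW(\I)$ coincides with the class of all finite spaces''.

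The only point requiring a short argument is the equivalence of the Kat\v{e}tov conditions, i.e. that $\rho_{\Fin^2}\leq_K\rho_{\I}$ if and only if $\Fin^2\leq_K\I$, and I expect this to be the main (though still routine) obstacle, since the Kat\v{e}tov order for partition regular functions is formally more elaborate than for ideals. For the forward implication, given $f\colon\omega\to\omega^2$ witnessing $\Fin^2\leq_K\I$, I would put $F_2=f[F_1]\in(\Fin^2)^+$ for each $F_1\in\I^+$, and given a finite $K_1$ take $K_2=f[K_1]$; then $F_2\setminus K_2=f[F_1]\setminus f[K_1]\subseteq f[F_1\setminus K_1]$, which is precisely $\rho_{\Fin^2}(F_2\setminus K_2)\subseteq f[\rho_{\I}(F_1\setminus K_1)]$, and $F_2\setminus K_2\in(\Fin^2)^+$ because $K_2$ is finite. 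For the converse, given a witness $f$ for $\rho_{\Fin^2}\leq_K\rho_{\I}$, I would fix $A\in\I^+$ and apply the defining condition with $F_1=A$ and $K_1=\emptyset$ to get $F_2\in(\Fin^2)^+$ and a finite $K_2$ with $F_2\setminus K_2\subseteq f[A]$; since $F_2\setminus K_2\in(\Fin^2)^+$, this forces $f[A]\in(\Fin^2)^+$, so the same $f$ witnesses $\Fin^2\leq_K\I$.

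With these three identifications in place, the corollary follows verbatim from Theorem~\ref{thm:Finite-Spaces-via-FinSqrd}; alternatively, all three facts (the identity $\FinBW(\rho_{\I})=\FinBW(\I)$, the definitional equivalence for $P^-(\omega)$, and the agreement of the Kat\v{e}tov orders on ideals and their induced partition regular functions) may simply be cited, recovering the statement originally due to \cite[Theorem~6.5 and Proposition~2.5]{MR4584767}.
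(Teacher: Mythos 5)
Your proof is correct and takes essentially the same route as the paper: the paper's proof is exactly the one-line specialization $\rho=\rho_{\I}$ of Theorem~\ref{thm:Finite-Spaces-via-FinSqrd}, citing \cite[Propositions~6.5(2), 7.5(2b) and 10.2(4)]{FKK-Unified} for the three translation facts ($\FinBW(\rho_\I)=\FinBW(\I)$, the $P^-$ equivalence, and $\rho_{\Fin^2}\leq_K\rho_\I\iff\Fin^2\leq_K\I$) that you instead verify directly. Your inline verifications, including the quantifier bookkeeping in the Kat\v{e}tov equivalence (taking $F_2=f[F_1]$, $K_2=f[K_1]$ one way, and $K_1=\emptyset$ the other), are sound.
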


\begin{proof}
Take $\rho=\rho_\I$ and apply  Theorem~\ref{thm:Finite-Spaces-via-FinSqrd}
and \cite[Propositions~6.5(2), 7.5(2b) and 10.2(4)]{FKK-Unified}.
\end{proof}

\begin{question}
Is there a partition regular function 
$\eta$ such that the following conditions are equivalent 
\begin{enumerate}
    \item 
$\FinBW(\rho)$ coincides with the class of all finite spaces,
\item  $\eta \leq_K \rho$
\end{enumerate}
for every partition regular function $\rho$?
\end{question}


\section{Critical \texorpdfstring{$\rho$}{rho} for the class of all boring spaces}
\label{sec:akjsfoias}

First, we introduce  a  strengthening of the property $P^-(\Lambda)$ which will be used  to characterize the class of all boring spaces.

\begin{definition}
Let $\rho : \mathcal{F} \to [\Lambda]^{\omega}$ be partition regular with $\cF\subseteq[\Omega]^\omega$.
We say that  
\begin{enumerate}
    \item 
$\rho \in P_1^-(\Lambda) $ if $\rho\in P^-(\Lambda)$;

\item 
$\rho\in P_2^-(\Lambda)$ if 
for every partition $\{A_{i,j}: i,j\in \omega\}$ of $\Lambda$ such that $A_{i,j} \in \I_{\rho}$ for each $i,j\in \omega$ there exists  $F \in \mathcal{F} $ such that
    \begin{enumerate}
    \item $ \forall i,j \, \exists K\in [\Omega]^{<\omega} \, (\rho(F\setminus K)\cap A_{i,j}=\emptyset) $,
    \item $\exists i_0\,\forall i>i_0\, \exists K\in [\Omega]^{<\omega} \, (\rho(F\setminus K)\cap \bigcup_{j<\omega} A_{i,j}=\emptyset) $.
    \end{enumerate}
\end{enumerate}
It is not difficult to notice that $\rho \in P_2^-(\Lambda)$ implies $\rho \in P_1^-(\Lambda)$.
\end{definition}

\begin{theorem}
\label{thm:rho-BI-vs-P2minus-vs-FinBW}
Let $\rho : \mathcal{F} \to [\Lambda]^{\omega}$ be partition regular with $\mathcal{F} \subseteq [\Omega]^{\omega}$. 
\begin{enumerate}
    \item 
    If $\rho \in (S_1)$ then 
\begin{equation*}
\rho_{\BI}\leq_K \rho \iff \rho \not\in P^-_2(\Lambda).
\end{equation*}
The assumption  $\rho\in (S_1)$  is only used to prove the implication ``$\impliedby$''.\label{thm:rho-BI-vs-P2minus-vs-FinBW:item-rho-BI-vs-P2minus}

\item 
If $\rho \in (S_2)$, then 
\begin{equation*}
\rho \in P_2^-(\Lambda) \iff \omega^2 +1 \in \FinBW(\rho),
\end{equation*}
     where the ordinal $\omega^2+1$ is considered as a topological space with  the order topology.
The assumption  $\rho\in (S_2)$  is only used to prove the implication ``$\implies$''.\label{thm:rho-BI-vs-P2minus-vs-FinBW:item-P2minus-vs-FinBW} 
\end{enumerate}
\end{theorem}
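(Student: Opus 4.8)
The plan is to prove each of the two equivalences by separating the implication that needs no sparseness hypothesis from the one that does. Throughout I would lean on one elementary observation about $\BI$: writing $B_i=\{(j,k)\in\omega^2:(i,j,k)\in B\}$ for the $i$-th slice of $B\subseteq\omega^3$, the definition of $\BI$ gives at once that $B\in\BI^+$ \emph{iff} some slice $B_i\notin\Fin^2$, or else infinitely many slices $B_i$ are infinite. For the direction $\rho_{\BI}\le_K\rho\Rightarrow\rho\notin P_2^-(\Lambda)$ of part~(\ref{thm:rho-BI-vs-P2minus-vs-FinBW:item-rho-BI-vs-P2minus}) (no $(S_1)$ needed) I would take a witness $f\colon\Lambda\to\omega^3$ and pull back the canonical partition $C_{i,j}=\{i\}\times\{j\}\times\omega$ of $\omega^3$, each $C_{i,j}\in\BI$, by $A_{i,j}=f^{-1}[C_{i,j}]$. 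First $A_{i,j}\in\I_\rho$: otherwise $\rho(E)\subseteq A_{i,j}$ for some $E$, and the Kat\v{e}tov inequality would supply $B\in\BI^+$ with $B\setminus K_2\subseteq f[\rho(E\setminus K)]\subseteq C_{i,j}$, forcing $C_{i,j}\in\BI^+$. Then no $F$ can satisfy both (a) and (b): attaching to $F$ its Kat\v{e}tov set $B\in\BI^+$, clause (a) forces every column of every slice $B_i$ to be finite (so $B_i\in\Fin^2$ for all $i$), while clause (b) forces $B_i$ finite for all $i>i_0$; together these give $B\in\BI$, a contradiction.

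The reverse implication $\rho\notin P_2^-(\Lambda)\Rightarrow\rho_{\BI}\le_K\rho$ is the main obstacle of the theorem, and the only place $(S_1)$ is used. Fixing the witnessing partition $\{A_{i,j}\}$ and an injection $f$ with $f[A_{i,j}]\subseteq\{i\}\times\{j\}\times\omega$, for each $F$ I must build $B\in\BI^+$ lying cofinally inside $f[\rho(F\setminus\cdot)]$; by the observation it suffices to realise either infinitely many persistently hit layers (i.e.\ layers met by $\rho(F\setminus K)$ for every finite $K$) or a single layer containing infinitely many persistently hit columns. The construction is a two-level elaboration of the induction proving $(\ref{thm:Finite-Spaces-via-FinSqrd:item-Pminus})\Rightarrow(\ref{thm:Finite-Spaces-via-FinSqrd:item-rho-FinSqrd})$ in Theorem~\ref{thm:Finite-Spaces-via-FinSqrd}: I repeatedly invoke the $(S_1)$-witness to pass to a sub-$G$ that avoids the finitely many layers already chosen yet stays tail-cofinal in $F$, and read off a fresh persistently hit layer from the failure of clause (b). While failure keeps occurring through clause (b) I keep adding layers and secure the ``infinitely many infinite slices'' alternative; if at some refinement clause (b) holds, failure is forced through clause (a), and I localise (again via $(S_1)$) to a single $\I_\rho$-positive layer, where $\BI$ restricts to a copy of $\Fin^2$ and the failure of $P_2^-(\Lambda)$ makes the restricted function fail $P^-$, so Theorem~\ref{thm:Finite-Spaces-via-FinSqrd} produces a slice outside $\Fin^2$. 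The delicate point, exactly as in the model proof, is to pick the harvested elements so that after deleting any finite $L$ they still lie in $\rho(F\setminus K)$; this is precisely what the $\forall K\,\exists L$ clause of $(S_1)$ supplies.

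For part~(\ref{thm:rho-BI-vs-P2minus-vs-FinBW:item-P2minus-vs-FinBW}), the implication $\omega^2+1\in\FinBW(\rho)\Rightarrow\rho\in P_2^-(\Lambda)$ requires no $(S_2)$. Given a partition $\{A_{i,j}\}$ into $\I_\rho$-cells, I embed it by $f(x)=\omega\cdot i+(j+1)$ for $x\in A_{i,j}$ and apply $\FinBW$ to get $F$ with $f\restriction\rho(F)$ $\rho$-convergent to some $x$. Convergence to an isolated (successor) point is impossible since the cells lie in $\I_\rho$. If $x=\omega\cdot m$ is a limit point, convergence concentrates $\rho(F\setminus\cdot)$ on the tail columns of the single layer $m-1$, yielding both (a) and (b); if $x=\omega^2$, then for each $i$ the neighbourhood $(\omega\cdot(i+1),\omega^2]$ pushes $\rho(F\setminus K)$ off all of layer $i$, so \emph{every} layer is eventually avoided, again giving (a) and (b). Either way $F$ witnesses $\rho\in P_2^-(\Lambda)$.

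Finally $\rho\in P_2^-(\Lambda)\Rightarrow\omega^2+1\in\FinBW(\rho)$ is where $(S_2)$ enters. Given $g\colon\Lambda\to\omega^2+1$, I first dispose of the trivial cases where some cell $g^{-1}(\omega\cdot i+k)$ or the fibre $g^{-1}(\omega^2)$ is $\I_\rho$-positive, which make $g$ constant on a $\rho$-image. Otherwise I absorb $g^{-1}(\omega^2)$ into one $\I_\rho$-cell, apply $P_2^-(\Lambda)$ to the resulting partition $\{A_{i,k}\}$, and obtain $F$ satisfying (a),(b) with a threshold $i_0$. Since $\rho(F)$ splits over the finitely many low layers $L_0,\dots,L_{i_0}$ and the tail $Q=\bigcup_{i>i_0}L_i$, one of these $i_0+2$ pieces has $\I_\rho$-positive intersection with $\rho(F)$. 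The crucial point is that property (R) cannot be used to isolate that piece, because the element it returns need not lie below $F$ and so would not inherit (a),(b); instead I pass to an $(S_2)$-witness $\tilde F\subseteq F$ and use its defining property to extract $G$ with $\rho(G)$ inside the chosen positive piece and tail-cofinal in $\tilde F$, so that (a),(b) persist for $G$. If that piece is a layer $L_{i^*}$, clause (a) makes $g\restriction\rho(G)$ $\rho$-converge to $\omega\cdot(i^*+1)$; if it is $Q$, then since below any $\gamma<\omega^2$ there are only finitely many layers, clause (b) lets me avoid all of them simultaneously, so $g\restriction\rho(G)$ $\rho$-converges to $\omega^2$. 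In either case a $\rho$-convergent subsequence is produced, which is what was required.
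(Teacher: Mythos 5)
Most of your proposal tracks the paper's own argument: your contrapositive for the forward direction of part~(\ref{thm:rho-BI-vs-P2minus-vs-FinBW:item-rho-BI-vs-P2minus}) (clause (a) plus the Kat\v{e}tov set forces every slice into $\Fin^2$, clause (b) makes all but finitely many slices finite, so the witness would lie in $\BI$), both directions of part~(\ref{thm:rho-BI-vs-P2minus-vs-FinBW:item-P2minus-vs-FinBW}) (your shifted embedding $x\mapsto\omega\cdot i+(j+1)$ is a pleasant variant that kills the successor-point case uniformly, and your $(S_2)$-based case split over the $i_0+2$ pieces is the paper's proof, modulo the small ordering point that the $(S_2)$-witness $\tilde F$ must be fixed \emph{before} the positive piece is selected, inside $\rho(\tilde F)$), and Case~1 of the hard implication are all sound. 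The genuine gap is your Case~2 of $\rho\notin P_2^-(\Lambda)\implies\rho_{\BI}\leq_K\rho$, where you localise to an $\I_\rho$-positive layer $\Lambda^*=\bigcup_{j}A_{i^*,j}$ and apply Theorem~\ref{thm:Finite-Spaces-via-FinSqrd} to the restricted function on $\cF^*=\{F\in\cF:\rho(F)\subseteq\Lambda^*\}$. The preparatory facts are actually salvageable: some low layer must be positive (else $(S_1)$-removal of the union of the small low layers, combined with the inherited clause (b), yields a $G$ satisfying both (a) and (b), contradicting the failure of $P_2^-$), and every $F'\in\cF^*$ persistently hits some cell $A_{i^*,j}$, so the restriction indeed fails $P^-(\Lambda^*)$. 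What breaks is the transfer back to the \emph{given} $F\in\cF$: the Kat\v{e}tov condition quantifies over all $F\in\cF$, so you need some $F'\in\cF^*$ tail-cofinally below $F$, i.e.\ a refinement with $\rho(F')\subseteq\Lambda^*$ and $\forall K\,\exists L\,\left(\rho(F'\setminus L)\subseteq\rho(F\setminus K)\right)$. Producing such an $F'$ means shrinking $\rho(F)$ into a \emph{positive} set while preserving tail-cofinality --- which is exactly the defining clause of $(S_2)$, whereas part~(\ref{thm:rho-BI-vs-P2minus-vs-FinBW:item-rho-BI-vs-P2minus}) assumes only $(S_1)$, which removes $\I_\rho$-small sets. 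Since $\Lambda\setminus\Lambda^*$ may contain other positive layers, $(S_1)$ cannot reach inside $\Lambda^*$, and (R) is useless here for precisely the reason you yourself articulate in part~(\ref{thm:rho-BI-vs-P2minus-vs-FinBW:item-P2minus-vs-FinBW}). As written, your Case~2 proves part~(\ref{thm:rho-BI-vs-P2minus-vs-FinBW:item-rho-BI-vs-P2minus}) only under the strictly stronger hypothesis $(S_2)$.

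The paper circumvents this by never producing any element of $\cF^*$: its inductive claim removes only the finite unions $\bigcup_{l\leq n}A_{i_l,j_l}$ of \emph{single cells}, which do lie in $\I_\rho$, so $(S_1)$ suffices; clause (b), inherited along tail-cofinal refinements $P_n$, forces the failure of $P_2^-$ to occur through clause (a) at each stage, yielding a fresh persistently hit cell, and hence infinitely many cells $A_{i_n,j_n}$ persistently hit by the tails of $E$ itself. Points are then harvested directly from $\rho(E\setminus\cdot)$ using the (S)-sets $K_a$, giving $B\in\BI^+$ whether the $i_n$ are bounded (one slice outside $\Fin^2$) or unbounded (infinitely many infinite slices). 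A secondary flaw in your reduction, even granting $(S_2)$: Theorem~\ref{thm:Finite-Spaces-via-FinSqrd} as a black box supplies \emph{some} witness function for the restricted function, not the restriction of your globally fixed injection $f$; you would have to re-run its proof observing that any injection respecting the cell partition works. To repair your architecture under $(S_1)$ alone, you must in effect reinstate the paper's claim; there is no shortcut through the restricted-function reduction.
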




\begin{proof}
    (\ref{thm:rho-BI-vs-P2minus-vs-FinBW:item-rho-BI-vs-P2minus}, $\implies$)  Let $f:\Lambda \to \omega^3$
    be a witness for $\rho_{BI}\leq_K \rho$.    
We define  $A_{i,j}=f^{-1}[\{i\} \times \{j\} \times \omega]$ for each $i,j\in \omega$.
Once we show that $\{A_{i,j}:i,j\in \omega\}$ is a witness for  $\rho \notin P^-_2(\Lambda)$, the proof will be finished.

First, we observe that $\{A_{i,j}: i,j\in \omega\}$ is a partition of $\Lambda$ and  $A_{i,j} \in \I_{\rho}$ because the function $f$ is a witness for $\rho_{BI}\leq_K \rho $. 
Now, we take any $ F \in \mathcal{F}$.
Since $f$ is a witness for $\rho_{BI}\leq_K \rho$, there  exists $G\notin \BI$ 
such that 
for every finite $K$ there is a finite $L$ with $G\setminus L \subseteq f[\rho(F\setminus K)]$.
Since $G\notin \BI$, we have two cases.

\emph{Case (1).} 
$\{(j,k) : (i_0,j,k)\in G\}\notin \Fin^2$ for some $i_0\in \omega$.

Then there exists  $j_0$ such that $D = \{k : (i_0,j_0,k)\in G\}\notin \Fin$.
Once we show that $\rho(F\setminus K)\cap A_{i_0,j_0}\neq\emptyset$
for any finite $K$, this case will be finished.
Take any finite $K$. Then, we can find a finite $L$ with 
$G\setminus L \subseteq f[\rho(F\setminus K)]$.
Since $D$ is infinite, we can find  $k_0$ such that $(i_0,j_0,k_0)\in G\setminus L$.
Then, 
$(i_0,j_0,k_0)\in f[\rho(F\setminus K)]$, so 
$
A_{i_0,j_0}\cap \rho(F\setminus K)
\supseteq 
f^{-1}[\{(i_0,j_0,k_0)\}]\cap \rho(F\setminus K) 
\neq \emptyset$.

\emph{Case (2).}  
$|\{(j,k) : (i,j,k)\in G\}|=\omega$ for infinitely many $i\in \omega$. 

Once we show that for every $i$ there is $i_0>i$ such that 
$\rho(F\setminus K)\cap \bigcup_{j<\omega} A_{i_0,j}\neq \emptyset $ for every finite $K$, this case will be finished. 
Take any $i\in \omega$.
Then, there is $i_0>i$ such that 
$D= \{(j,k) : (i_0,j,k)\in G\}$ is infinite.
Take any finite $K$. 
Then, we can find a finite $L$ with 
$G\setminus L \subseteq f[\rho(F\setminus K)]$.
Since  $\{i_0\}\times D$ is infinite, we can find $(j_0,k_0)\in D$ with 
$(i_0,j_0,k_0)\in G\setminus L$.
Then, 
$(i_0,j_0,k_0)\in f[\rho(F\setminus K)]$, so 
$
\bigcup_{j<\omega} A_{i_0,j}\cap \rho(F\setminus K) 
\supseteq 
f^{-1}[\{(i_0,j_0,k_0)\}]\cap \rho(F\setminus K) 
\neq \emptyset$.


       (\ref{thm:rho-BI-vs-P2minus-vs-FinBW:item-rho-BI-vs-P2minus}, $\impliedby$) 
    Let $\{A_{i,j}: i,j\in \omega\}$ be a witness for $\rho \notin P^-_2(\Lambda)$.     
Let $f:\Lambda \to \omega^3$ be an injective function such that $f[A_{i,j}] \subseteq \{(i,j)\} \times \omega$. Once we show that $f$ is a  witness for $\rho_{\BI}\leq_K \rho$, the proof will be finished.
Take any $F \in \mathcal{F}$.
We will find $G\notin\BI$ such that for every finite set $K$ there exists a finite set $L$ with $G\setminus L\subseteq f[\rho(F\setminus K)]$. 
Let  $E\in \cF$ be such that  $E\subseteq F$ and  for every $a\in \rho(E)$
there exists a finite set $K_a$ with $a\notin \rho(E\setminus K_a)$.
Since  $\{A_{i,j}: i,j\in \omega\}$ is  a witness for $\rho \notin P^-_2(\Lambda)$, we have the following cases.     

\emph{Case (1).} 
For every $i\in \omega$ there is $i_0>i$ such that 
$\rho(E\setminus K)\cap \bigcup_{j<\omega} A_{i_0,j}\neq \emptyset$ for every finite $K$. 

In this case, we can find  a strictly increasing sequence  $(i_n)$  such that for every $n\in \omega$ and every finite $K$
 we have 
\begin{equation*}
\rho(E\setminus K)\cap \bigcup_{j<\omega} A_{i_n,j}\neq \emptyset.
\end{equation*}
Let $g:\omega\to\omega$ be any function such that $g^{-1}(\{n\})$ is infinite for each $n\in \omega$.
Let $\Omega=\{m_i :i \in \omega\}$ and  $ M_n=\{m_i : i<n \} $ for each $n\in \omega$. 
Now, for each $n\in \omega$ we pick any 
\begin{equation*}
a_n \in \rho\left(E\setminus \left(\bigcup_{k<n} K_{a_k} \cup M_n\right)\right)\cap \bigcup_{j<\omega} A_{i_{g(n)},j}.
\end{equation*}
Let 
$G = \{f(a_n):n\in \omega\}$. 
Since $a_n$ are pairwise distinct and $f$ is injective,
we obtain that 
$G\cap (\{i_{n}\}\times\omega\times \omega)$ is infinite for each $n\in \omega$. Consequently, $G\notin \BI$.
Take any finite set $K$ and let $i$ be such that $K\subseteq M_i$.
Let $L=\{f(a_n):n<i\}$.
Then, $L$ is finite and 
for every $f(a_n)\in G\setminus L$ we have
\begin{equation*}
f(a_n)\in  f[\rho(E\setminus M_i)] \subseteq f[\rho(F\setminus K)],
\end{equation*}
so $G\setminus L \subseteq f[\rho(F\setminus K)]$.


\emph{Case (2).}
Case (1) does not hold, and consequently there exist $i,j$ such that $\rho(E\setminus K)\cap A_{i,j}\neq\emptyset$
 for every finite set $K$.

\begin{claim}
There exists an infinite set $\{(i_n,j_n):n\in \omega\}$ of pairs of integers such that 
$\rho(E\setminus K)\cap A_{i_n,j_n}\neq\emptyset$
for every $n$ and every finite set $K$.    
\end{claim}

\begin{proof}[Proof of claim]
    Since $\rho\in (S_1)$, 
there    exists $H \in \mathcal{F}$ with $H\subseteq E$ such that 
for every  $A \in \I_{\rho}$ there exists  $P \in \mathcal{F}$ with $\rho(P) \subseteq \rho(H) \setminus A$ such that
for every finite set $K$ there exists a finite set $L$ with $\rho(P\setminus L)\subseteq \rho(H\setminus K)$.
Since $H\subseteq E$, it is enough to show the claim for the set $H$ instead of $E$.

Now, we construct $i_n,j_n\in \omega$ and $P_n\in \cF$ such that for every $n$
\begin{enumerate}
    \item[(i)] $\rho(H\setminus K)\cap A_{i_0,j_0}\neq \emptyset$ for every finite set $K$,
    
    \item[(ii)] $\rho(P_n)\subseteq\rho(H)\setminus \bigcup\{A_{i_l,j_l}:l\leq n\}$,

    \item[(iii)] for every finite set $K$ there exists a finite set $L$ with $\rho(P_n\setminus L)\subseteq \rho(H\setminus K)$,

    \item[(iv)] $\rho(P_n\setminus K)\cap A_{i_{n+1},j_{n+1}}\neq \emptyset$ for every finite set $K$.

\end{enumerate}

Since $H\subseteq E$ and Case (1) does not hold for $E$, we find $i_0,j_0$ such that 
  $\rho(H\setminus K)\cap A_{i_0,j_0}\neq \emptyset$ for every finite set $K$.
  Since $A_{i_0,j_0}\in \I_\rho$, there is $P_0\in \cF$ which satisfies items (ii) and (iii).

Now, suppose that $i_l,j_l$ and $P_l$ have been constructed for $l\leq n$.
Since 
 $H\subseteq E$ and Case (1) does not hold for $E$, there is $k_0$ such that for every $k>k_0$ there is a finite set $K$ with $\rho(H\setminus K)\cap \bigcup\{A_{k,j}:j\in \omega\}=\emptyset$, so, by item (iii),  there is a finite set $L$ with 
 $\rho(P_n\setminus L)\cap \bigcup\{A_{k,j}:j\in \omega\}=\emptyset$.
Since  $\{A_{i,j}: i,j\in \omega\}$ is  a witness for $\rho \notin P^-_2(\Lambda)$, there are  $i_{n+1},j_{n+1}$ such that $\rho(P_n\setminus K)\cap A_{i_{n+1},j_{n+1}}\neq \emptyset$ for every finite set $K$. 
  Since $\bigcup\{A_{i_l,j_l}:l\leq n\}\in \I_\rho$, there is $P_{n+1}\in \cF$ which satisfies items (ii) and (iii).
This finishes the construction of $i_n,j_n$ and $P_n$.

Now, we notice that $\{(i_n,j_n):n\in \omega\}$ is infinite by items (ii) and (iv).
To see  that the sequence $(i_n,j_n)$ is as required in the claim, suppose for the sake of contradiction that there is $n$ and a finite set $K$ with $\rho(H\setminus K)\cap A_{i_n,j_n}=\emptyset$.
By item (i), we know that $n\geq 1$, By item (iii),  there is a finite set $L$ with $\rho(P_{n-1}\setminus L)\cap A_{i_n,j_n}=\emptyset$ which is a contradiction with item (iv).
\end{proof}

Now, the proof proceeds similarly to the proof of the previous case. However, for the sake of completeness, we present it here with minor modifications.
Let $g:\omega\to\omega$ be any function such that $g^{-1}(\{n\})$ is infinite for each $n\in \omega$.
Let $\Omega=\{m_i :i \in \omega\}$ and  $ M_n=\{m_i : i<n \} $ for each $n\in \omega$. 
Now, for each $n\in \omega$ we pick any 
\begin{equation*}
a_n \in \rho\left(E\setminus \left(\bigcup_{k<n} K_{a_k} \cup M_n\right)\right)\cap  A_{i_{g(n)},j_{g(n)}}.
\end{equation*}
Let 
$G = \{f(a_n):n\in \omega\}$. 
Since $a_n$ are pairwise distinct and $f$ is injective,
we obtain that 
$G\cap (\{(i_{n},j_{n})\}\times \omega)$ is infinite for each $n\in \omega$. 
We have two cases.

\emph{Case (a)}.
If $\{i_{n}:n\in \omega\}$ is finite, then there is $i$ such that 
$C=\{n:i_n=i\}$ is infinite.
Then, $\{j_n:n\in C\}$ is infinite, so 
$G\cap (\{(i,j_{n})\}\times \omega)$ is infinite for every $n\in C$. Consequently, $G\notin \BI$.

\emph{Case (b)}.
If $\{i_{n}:n\in \omega\}$ is infinite, then 
$G\cap (\{i_{n}\}\times\omega\times \omega)$ is infinite for each $n\in \omega$. Consequently, $G\notin \BI$.

Take any finite set $K$ and let $i$ be such that $K\subseteq M_i$.
Let $L=\{f(a_n):n<i\}$.
Then, $L$ is finite and 
for every $f(a_n)\in G\setminus L$ we have
\begin{equation*}
f(a_n)\in  f[\rho(E\setminus M_i)] \subseteq f[\rho(F\setminus K)],
\end{equation*}
so $G\setminus L \subseteq f[\rho(F\setminus K)]$.


$(\ref{thm:rho-BI-vs-P2minus-vs-FinBW:item-P2minus-vs-FinBW}, \implies)$ 
     Let $f: \Lambda \to \omega^2 +1$. We need to find some $F \in \mathcal{F}$  such that $f \restriction \rho(F)$ is $\rho$-convergent.
If there exists some $\alpha \in \omega^2 +1$ with  $f^{-1}(\alpha) \notin \I_{\rho}$, then there exists $F \in \mathcal{F}$ such that $\rho(F) \subseteq f^{-1}(\alpha) $, so $f \restriction \rho(F)$ is constant therefore $\rho$-convergent.
Now, we assume that $f^{-1}(\alpha) \in \I_{\rho}$ for each $\alpha \in \omega^2 +1$.
   Observe that if an ordinal $\alpha \in \omega^2 $ then there exists some  $i,k \in \omega $ for which $\alpha=\omega\cdot i +k$. We define a partition $(A_{i,k})_{i,k\in \omega}$ of $\Lambda$ by $A_{0,0}=f^{-1}(0)\cup  f^{-1}(\omega^2)$ and $A_{i,k}=f^{-1}(\omega\cdot i +k)$ if either $k \neq 0 $ or $ l\neq 0$. Then, $A_{i,k} \in \I_{\rho}$ for each $i,k$. 
   Since  $\rho \in P_2^-(\Lambda)$, there exists $E \in \mathcal{F}$ such that
    \begin{enumerate}
    \item $ \forall i,k \, \exists K\in [\Omega]^{<\omega} \, (\rho(E\setminus K)\cap A_{i,k}=\emptyset) $,
    \item $\exists i_0\,\forall i>i_0\, \exists K\in [\Omega]^{<\omega} \, (\rho(E\setminus K)\cap \bigcup_{k<\omega} A_{i,k}=\emptyset) $.
    \end{enumerate}
Since $\rho\in (S_2)$,  there exists $ F \in \mathcal{F}, F\subseteq E$ 
such that 
 for every $B \notin \I_{\rho}$ with $B \subseteq \rho(F)$ 
 there exists $G \in \mathcal{F}$ with $\rho(G) \subseteq B$
 such that 
\begin{equation*}
\forall K\in [\Omega]^{<\omega} \, \exists L \in [\Omega]^{<\omega} \, (\rho (G\setminus L) \subseteq \rho(F \setminus K)).
\end{equation*}
   
\emph{Case (1).}  $B = \rho(F)\cap \bigcup_{k<\omega} A_{i_0,k} \notin \I_{\rho}$
for some  $i_0\in \omega$. 
   
 Let  $ G \in \mathcal{F}$ be such that  $\rho(G) \subseteq B$ 
and 
\begin{equation*}
\forall K\in [\Omega]^{<\omega} \, \exists L \in [\Omega]^{<\omega} \, (\rho (G\setminus L) \subseteq \rho(F \setminus K)).
\end{equation*}
We will show that $f \restriction \rho(G)$ is $\rho$-convergent to $\omega\cdot (i_0+1)$. 
    Let $U$ be a neighborhood of $\omega\cdot (i_0+1)$. Then, there exists $k_0$ such that $[\omega\cdot i_0+k_0, \omega\cdot(i_0+1)]\subseteq U$. 
    For each $ k<k_0$, we take a finite set $K_k$ such that 
    $\rho(E\setminus K_k)\cap A_{i_0,k}=\emptyset$. 
Since $K =\bigcup_{k<k_0}K_k$ is finite, there exists a finite set $L$ such that 
$\rho(G\setminus L)\subseteq \rho(F\setminus K)$.
Then, 
\begin{equation*}
    \begin{split}        
\rho(G\setminus L) \cap \bigcup_{k<k_0} A_{i_0,k} 
&\subseteq
\rho\left(F\setminus K\right) \cap \bigcup_{k<k_0} A_{i_0,k} 
\subseteq
\rho\left(E\setminus K\right) \cap \bigcup_{k<k_0} A_{i_0,k} 
=\emptyset,
    \end{split}
\end{equation*}
so 
$f[\rho(G\setminus L)] \subseteq [\omega\cdot i_0+k_0,\omega\cdot (i_0+1)]\subseteq U$.
    
\emph{Case (2).} $\rho(F)\cap \bigcup_{k<\omega} A_{i,k} \in \I_{\rho}$ for every $i\in \omega$. 

For every $i>i_0$ there is a finite set $K_i$ such that 
$\rho(E\setminus K_i)\cap \bigcup_{j<\omega} A_{i,j}=\emptyset$.

Let 
\begin{equation*}
B = \bigcup_{i>i_0} \left(\rho(F)\cap \bigcup_{k<\omega} A_{i,k}\right).
\end{equation*}
Since $B\notin \I_\rho$ (as $B=\rho(F)\setminus\bigcup_{i\leq i_0}(\rho(F)\cap\bigcup_{k<\omega}A_{i,k})$) and $B\subseteq \rho(F)$, there is $G\in \cF$ such that  
$\rho(G) \subseteq B$
and 
\begin{equation*}
\forall K\in [\Omega]^{<\omega} \, \exists L \in [\Omega]^{<\omega} \, (\rho (G\setminus L) \subseteq \rho(F \setminus K)).
\end{equation*}
We will show that $f \restriction \rho(G)$ is $\rho$-convergent to $\omega^2$. 
Let $U$ be the neighborhood of $\omega^2$. Then, there exists $j_0$ such that $[\omega\cdot j_0,\omega^2]\subseteq U$. 
If $j_0<i_0$, then $\rho(G)\subseteq \bigcup_{i>i_0}  \bigcup_{k<\omega} A_{i,k}$,
so
$f[\rho(G)] \subseteq [\omega\cdot i_0,\omega^2]\subseteq [\omega\cdot j_0,\omega^2]\subseteq U$.
Now, assume that  $j_0\geq i_0$. 
Since  
$K=\bigcup_{i_0<i<j_0} K_i$
 is finite,  there exists a finite set $L$ such that 
$\rho(G\setminus L)  \subseteq \rho(F\setminus K)$.
Then, 
\begin{equation*}
    \begin{split}        
f[\rho(G\setminus L)]
&\subseteq 
f[\rho(F\setminus K)]
\subseteq
f[\rho(E\setminus K)]
\\&
\subseteq
f\left[\rho(E\setminus K)\cap \bigcup_{i_0<i<j_0} \bigcup_{k<\omega} A_{i,k}\right]
\cup
f\left[\rho(E\setminus K)\cap \bigcup_{i\geq j_0} \bigcup_{k<\omega} A_{i,k}\right]
\\&=
\emptyset\cup
f\left[\rho(E\setminus K)\cap \bigcup_{i\geq j_0} \bigcup_{k<\omega} A_{i,k}\right]
\subseteq 
[\omega\cdot j_0,\omega^2]\subseteq U.
    \end{split}
\end{equation*}


$(\ref{thm:rho-BI-vs-P2minus-vs-FinBW:item-P2minus-vs-FinBW}, \impliedby)$ 
    Take any partition $\{A_{i,j}:i,j\in \omega\}$ of $\Lambda$ such that $A_{i,j}\in \I_\rho$ for each $i,j$.
    Let $f:\Lambda\to \omega^2+1$ be a function such that  
    $f[A_{i,j}]\subseteq \{\omega\cdot i+j\}$ for every $i,j$.
Since $\omega^2+1\in \FinBW(\rho)$, there exists $F\in \cF$ such that $f\restriction\rho(F)$ is $\rho$-convergent to $p\in \omega^2+1$.
We have three cases.

\emph{Case (1).}
$p=\omega^2$.

Take any $i\in \omega$.
Then,  $U=(\omega\cdot (i+1),\omega^2]$ is an open  neighborhood of $p$, so there is a finite $K$ with $f[\rho(F\setminus K)] \subseteq U$.
Thus, $\rho(F\setminus K)\cap \bigcup_{j<\omega}A_{i,j}=\emptyset$.

\emph{Case (2).}
$p=\omega\cdot i_0$ for some $i_0\in \omega\setminus\{0\}$.

Take any $i\neq i_0-1$ and $j\in \omega$.
Then, $U=(\omega\cdot (i_0-1)+j,\omega\cdot i_0]$ is an open neighborhood of $p$, so there is a finite $K$ with $f[\rho(F\setminus K)] \subseteq U$.
Thus, $\rho(F\setminus K)\cap \bigcup_{j<\omega}A_{i,j}=\emptyset$
and 
$\rho(F\setminus K)\cap A_{i_0-1,j}=\emptyset$.

\emph{Case (3).}
$p=0$ or $p=\omega\cdot i_0+j_0$ for some $i_0\in \omega$ and $j_0\in \omega\setminus\{0\}$.

In this case, $U=\{p\}$ is an open neighborhood of $p$, so there is a finite $K$ with $f[\rho(F\setminus K)] \subseteq U$. If $p=0$ we obtain $\rho(F\setminus K) \subseteq A_{0,0}$, so $A_{0,0}\notin \I_\rho$, a contradiction. Similarly, if $p=\omega\cdot i_0+j_0$ for some $i_0\in \omega$ and $j_0\in \omega\setminus\{0\}$, we obtain $\rho(F\setminus K) \subseteq A_{i_0,j_0}$, so $A_{i_0,j_0}\notin \I_\rho$, again a contradiction.
\end{proof}

Recall that a sequentially compact space $X$ is called \emph{boring} (\cite[Definition~3.21]{MR4584767}) if there exists a finite set $F\subseteq X$ such that each one-to-one sequence in $X$ converges to some point of  $F$.

\begin{theorem}
\label{thm:Boring-Spaces-via-BI}
Let $\rho : \mathcal{F} \to [\Lambda]^{\omega}$ be partition regular.
If  $\rho \in (S_2)$, then the following conditions are equivalent.
    \begin{enumerate}
        \item $\rho_{\BI}\leq_K\rho$ and  $\rho_{\Fin^2} \not\leq_K \rho$.\label{thm:Boring-Spaces-via-BI:item-rho-BI}
        
        \item $\rho\in P_1^-(\Lambda)$ and $\rho \notin P_2^-(\Lambda)$.\label{thm:Boring-Spaces-via-BI:item-Pminus} 

        \item  $\FinBW(\rho)$ coincides with the class of all boring spaces.\label{thm:Boring-Spaces-via-BI:item-FinBW}
        \end{enumerate}
\end{theorem}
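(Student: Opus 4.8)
The plan is to run the cycle $(\ref{thm:Boring-Spaces-via-BI:item-rho-BI})\iff(\ref{thm:Boring-Spaces-via-BI:item-Pminus})$, $(\ref{thm:Boring-Spaces-via-BI:item-FinBW})\implies(\ref{thm:Boring-Spaces-via-BI:item-Pminus})$ and $(\ref{thm:Boring-Spaces-via-BI:item-rho-BI})\,\&\,(\ref{thm:Boring-Spaces-via-BI:item-Pminus})\implies(\ref{thm:Boring-Spaces-via-BI:item-FinBW})$, reducing as much as possible to the two preceding theorems; note that $(S_2)\implies(S_1)$, so both Theorem~\ref{thm:Finite-Spaces-via-FinSqrd} and Theorem~\ref{thm:rho-BI-vs-P2minus-vs-FinBW} are at our disposal. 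The equivalence $(\ref{thm:Boring-Spaces-via-BI:item-rho-BI})\iff(\ref{thm:Boring-Spaces-via-BI:item-Pminus})$ is then purely formal: by Theorem~\ref{thm:Finite-Spaces-via-FinSqrd} (equivalence of its items $(\ref{thm:Finite-Spaces-via-FinSqrd:item-Pminus})$ and $(\ref{thm:Finite-Spaces-via-FinSqrd:item-rho-FinSqrd})$) one has $\rho_{\Fin^2}\not\leq_K\rho\iff\rho\in P^-(\Lambda)=P_1^-(\Lambda)$, and by Theorem~\ref{thm:rho-BI-vs-P2minus-vs-FinBW}(\ref{thm:rho-BI-vs-P2minus-vs-FinBW:item-rho-BI-vs-P2minus}) one has $\rho_{\BI}\leq_K\rho\iff\rho\notin P_2^-(\Lambda)$; conjoining these is exactly (2).

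For $(\ref{thm:Boring-Spaces-via-BI:item-FinBW})\implies(\ref{thm:Boring-Spaces-via-BI:item-Pminus})$ I would use only two facts about ordinal spaces: $\omega+1$, the one-point compactification of a countable discrete space, is boring, whereas $\omega^2+1$ is \emph{not} boring, because the points $\omega\cdot k$ for $k\geq 1$ together with $\omega^2$ are infinitely many limits of one-to-one sequences. Assuming (3), the first fact gives $\omega+1\in\FinBW(\rho)$, hence $\rho\in P^-(\Lambda)=P_1^-(\Lambda)$ by Theorem~\ref{thm:Finite-Spaces-via-FinSqrd}; the second gives $\omega^2+1\notin\FinBW(\rho)$, hence $\rho\notin P_2^-(\Lambda)$ by the contrapositive of Theorem~\ref{thm:rho-BI-vs-P2minus-vs-FinBW}(\ref{thm:rho-BI-vs-P2minus-vs-FinBW:item-P2minus-vs-FinBW}) (this is where $(S_2)$ is needed). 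Together these are precisely (2).

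The remaining implication $(\ref{thm:Boring-Spaces-via-BI:item-rho-BI})\,\&\,(\ref{thm:Boring-Spaces-via-BI:item-Pminus})\implies(\ref{thm:Boring-Spaces-via-BI:item-FinBW})$ I would split into two inclusions. The inclusion $\FinBW(\rho)\subseteq\{\text{boring spaces}\}$ follows from Kat\v{e}tov monotonicity: $\rho_{\BI}\leq_K\rho$ gives $\FinBW(\rho)\subseteq\FinBW(\rho_{\BI})$ by \cite[Theorem~11.1]{FKK-Unified}, while $\FinBW(\rho_{\BI})=\FinBW(\BI)$ by \cite[Proposition~10.2]{FKK-Unified}, and $\FinBW(\BI)$ is exactly the class of boring spaces by \cite{MR4584767} (Theorem~\ref{thm:laisurhfkasd} applied with $\I=\BI$, legitimate since $\BI\in P^-(\omega)$ forces $\Fin^2\not\leq_K\BI$ through Corollary~\ref{cor:ksahdflkhsadflksad}); this uses only $\rho_{\BI}\leq_K\rho$. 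For the converse inclusion $\{\text{boring spaces}\}\subseteq\FinBW(\rho)$ the usable hypothesis is $\rho\in P^-(\Lambda)$, i.e.\ $\omega+1\in\FinBW(\rho)$. Given a boring space $X=\bigsqcup_{j\leq m}Y_j$ with each $Y_j=D_j\cup\{q_j\}$ a one-point compactification of a discrete $D_j$, and a sequence $f\colon\Lambda\to X$, I would first assume every fibre $f^{-1}(x)$ lies in $\I_\rho$ (otherwise a constant restriction $\rho$-converges), then apply property (R) to the finite partition $\{f^{-1}(Y_j):j\leq m\}$ of $\Lambda\in\I_\rho^+$ to obtain $F_1\in\cF$ with $\rho(F_1)\subseteq f^{-1}(Y_{j_0})$. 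Writing $f[\rho(F_1)]\cap D_{j_0}=\{e_n:n\in\omega\}$, the topology of the one-point compactification reduces ``$f\restriction\rho(F)$ $\rho$-converges to $q_{j_0}$'' to the single requirement that for each $n$ some finite $K$ gives $\rho(F\setminus K)\cap f^{-1}(e_n)=\emptyset$, provided that $\rho(F)\subseteq\rho(F_1)$.

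The main obstacle is exactly this last proviso. A direct appeal to $P^-(\Lambda)$ along a partition of $\Lambda$ refining the fibres $f^{-1}(e_n)$ does not confine $\rho(F)$ to $f^{-1}(Y_{j_0})$, so the restriction may oscillate across the pieces $Y_j$ and converge to no single $q_j$. To handle it I would pass to the decreasing sequence $C_k=\rho(F_1)\setminus\bigcup_{n<k}f^{-1}(e_n)$, noting $C_0=\rho(F_1)\in\I_\rho^+$ and $C_k\setminus C_{k+1}\in\I_\rho$, and extract $F\in\cF$ with $\rho(F)\subseteq C_0=\rho(F_1)$ and $\rho(F\setminus K)\subseteq C_k$ for suitable finite $K$; the inclusion $\rho(F)\subseteq\rho(F_1)$ is what keeps the limit inside $Y_{j_0}$. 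Carrying out this \emph{localization} — transferring the global property $P^-(\Lambda)$ to the cone below $F_1$ while preserving $\rho$-convergence — is the step that consumes the sparseness hypothesis $\rho\in(S_2)$, whose compatibility clause $\forall K\,\exists L\,(\rho(G\setminus L)\subseteq\rho(F_1\setminus K))$ realizes a positive subset of $\rho(F_1)$ by some $G\in\cF$ whose tails are subordinate to those of $F_1$. I expect verifying that this transfer genuinely yields a $\rho$-convergent restriction, rather than merely a pointwise-avoiding one, to be the delicate part of the argument.
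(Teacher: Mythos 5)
Your proposal is correct in substance, and its skeleton --- the equivalence $(\ref{thm:Boring-Spaces-via-BI:item-rho-BI})\iff(\ref{thm:Boring-Spaces-via-BI:item-Pminus})$ obtained purely formally from Theorems~\ref{thm:Finite-Spaces-via-FinSqrd} and \ref{thm:rho-BI-vs-P2minus-vs-FinBW}, and $(\ref{thm:Boring-Spaces-via-BI:item-FinBW})\implies(\ref{thm:Boring-Spaces-via-BI:item-Pminus})$ via $\omega+1$ being boring while $\omega^2+1$ is not --- coincides with the paper's. The genuine divergence is in $(\ref{thm:Boring-Spaces-via-BI:item-Pminus})\implies(\ref{thm:Boring-Spaces-via-BI:item-FinBW})$. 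For the inclusion $\FinBW(\rho)\subseteq\{\text{boring spaces}\}$ you use Kat\v{e}tov monotonicity ($\rho_{\BI}\leq_K\rho$ gives $\FinBW(\rho)\subseteq\FinBW(\rho_{\BI})=\FinBW(\BI)$, which is the class of boring spaces), whereas the paper argues directly inside an arbitrary unboring space: it picks distinct limit points $x_i$ with injective sequences $(x_{i,j})_j$ converging to them, maps a witness partition for $\rho\notin P_2^-(\Lambda)$ onto these, and uses an $(S_1)$-based claim to produce two ``persistent'' cells, contradicting $\rho$-convergence. Your route is shorter and reduces cleanly to the ideal case, at the price of importing from \cite{MR4584767} the facts that $\FinBW(\BI)$ is exactly the class of boring spaces and that $\Fin^2\not\leq_K\BI$ (equivalently, $\BI\in P^-(\omega)$ via Corollary~\ref{cor:ksahdflkhsadflksad}); these are indeed available, so this step is legitimate even though you assert them without proof.

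For the inclusion $\{\text{boring spaces}\}\subseteq\FinBW(\rho)$ your diagnosis of the localization problem is exactly right --- and it is worth noting that the paper's own text asserts, from $P_1^-(\Lambda)$ alone, an $E$ with \emph{both} $\rho(E)\subseteq f^{-1}[X_{i_0}]$ and tails avoiding every fibre, which is formally a localized $P^-$-type conclusion rather than an instance of $P^-(\Lambda)$; something must supply the localization, and $(S_2)$ (available from the theorem's hypothesis) is it. One assembly correction, however: as written you fix $F_1$ with $\rho(F_1)\subseteq f^{-1}[Y_{j_0}]$ first and then want $G$ with $\rho(G)$ a positive subset of $\rho(F_1)$ and tails subordinate to those of $F_1$; but the tails of $F_1$ itself need not avoid any fibre, and $\cF$ carries no finite-intersection structure letting you combine $F_1$ with a separately produced $P^-(\Lambda)$-witness. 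The working order is the reverse: first apply $P^-(\Lambda)$ to the countable fibre partition $\{f^{-1}(x):x\in f[\Lambda]\}$ of $\Lambda$ to get $F$ whose tails avoid every fibre; then take $F'\subseteq F$ as in $(S_2)$ (by (M), $\rho(F'\setminus K)\subseteq\rho(F\setminus K)$, so the tails of $F'$ still avoid the fibres); then use (R) on the finite decomposition $\rho(F')=\bigcup_j\bigl(\rho(F')\cap f^{-1}[Y_j]\bigr)$ to pick $j_0$ with $B=\rho(F')\cap f^{-1}[Y_{j_0}]\notin\I_\rho$; finally $(S_2)$ yields $G$ with $\rho(G)\subseteq B$ and $\forall K\,\exists L\,(\rho(G\setminus L)\subseteq\rho(F'\setminus K))$, which gives localization and fibre-avoidance simultaneously, after which $\rho$-convergence of $f\restriction\rho(G)$ to $q_{j_0}$ follows as you describe, since $Y_{j_0}\setminus U$ is finite for every neighbourhood $U$ of $q_{j_0}$. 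With this reordering your argument closes completely.
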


\begin{proof}
     $(\ref{thm:Boring-Spaces-via-BI:item-rho-BI}) \iff (\ref{thm:Boring-Spaces-via-BI:item-Pminus})$
It follows from 
Theorems~\ref{thm:Finite-Spaces-via-FinSqrd}
and \ref{thm:rho-BI-vs-P2minus-vs-FinBW}.


$(\ref{thm:Boring-Spaces-via-BI:item-Pminus}) \implies (\ref{thm:Boring-Spaces-via-BI:item-FinBW})$
First, we show that no unboring space is in $\FinBW(\rho)$.
Let $X$ be an unboring space.
Then, using \cite[Proposition~3.2]{MR4584767}, we can pick distinct points  $x_i \in X $ for $i\in \Lambda$ and injective sequences $(x_{i,j})_{j \in \Lambda}$  that are convergent to $x_i$, respectively. Since $\rho \notin P_2^-(\Lambda)$,  there exists a  partition $(A_{i,j})_{i,j \in \omega}$ of $\Lambda$ witnessing the lack of $P^-_2(\Lambda)$.
 Define a function $f:\Lambda \to \omega^2$ by $f(\lambda)=(i,j)$ for each  $\lambda \in A_{i,j}$ and a function $g:\Lambda \to X$ by $g(\lambda)=x_{f(\lambda)}$. Suppose for the sake of contradiction that there exists some $F\in \mathcal{F} $ such that $g \restriction \rho(F)$ is $\rho$-convergent to some $p \in X$. 
Since $\rho \notin P_2^-(\Lambda)$, 
we have two cases.

\emph{Case (1)}.
For every $l$ there is some $i_l\geq l$ such that for all finite $K$,  
\begin{equation*}
\rho(F\setminus K) \cap \bigcup_{j\in \omega} A_{i_l,j} \neq \emptyset
\end{equation*}

(in particular, $f[\rho(F\setminus K)] \cap (\{i_l\} \times \omega) \neq \emptyset$ as $\bigcup_{j\in\omega}A_{i_l,j}=f^{-1}[\{i_l\}\times\omega]$). Then we can pick $l_1,l_2$  such that $i_{l_1}\neq i_{l_2}$. Hence, $x_{i_{l_1}}\neq x_{i_{l_2}}$, so one of them has to be distinct from $p$. Without loss of generality let us assume $x_{i_{l_1}} \neq p$.  Since $X$ is Hausdorff, we can take neighborhoods $U$ of $x_{i_{l_1}}$ and $V$ of $p$ that are disjoint and assume without loss of generality that $V \cap \{x_{i_{l_1},j} : j \in \omega \} = \emptyset$. From $\rho$-convergence of $g\restriction\rho(F)$ to $p$ there is some finite $K$ such that $g[\rho(F\setminus K)] \subseteq V$. However, $f[\rho(F\setminus K)] \cap (\{i_{l_1}\} \times \omega)) \neq \emptyset$, so there is some  $\lambda_0 \in \rho(F \setminus K) $ with $ f(\lambda_0) \in \{i_{l_1}\} \times \omega$. Let  $ j \in \omega $ be such that $ f(\lambda_0)=(i_{l_1},j)$. Then, $g(\lambda_0)=x_{f(\lambda_0)}=x_{i_{l_1},j} \notin V$, but on the other hand, $g(\lambda_0) \in g[\rho(F\setminus K)] \subseteq V$, a contradiction.

\emph{Case (2)}.
Case (1) does not hold, and consequently there are $i_0,j_0$ such that 
$\rho(F\setminus K)\cap A_{i_0,j_0}\neq\emptyset$
for every finite set $K$.

\begin{claim}
    There are $i_1,j_1$ such that $(i_0,j_0)\neq (i_1,j_1)$ and 
$\rho(F\setminus K)\cap A_{i_1,j_1}\neq\emptyset$
for every finite set $K$.
\end{claim}

\begin{proof}[Proof of claim]
        Since $\rho\in (S_1)$, 
there    exist $H \in \mathcal{F}$ with $H\subseteq F$ and  
$P \in \mathcal{F}$ with $\rho(P) \subseteq \rho(H) \setminus A_{i_0,j_0}$ such that
for every finite set $K$ there exists a finite set $L$ with $\rho(P\setminus L)\subseteq \rho(H\setminus K)$.
Since 
 $H\subseteq F$ and Case (1) does not hold,  there is $k_0$ such that for every $k>k_0$ there is a finite set $K$ with $\rho(H\setminus K)\cap \bigcup\{A_{k,j}:j\in \omega\}=\emptyset$, so there is a finite set $L$ with 
 $\rho(P\setminus L)\cap \bigcup\{A_{k,j}:j\in \omega\}=\emptyset$.
Since  $\{A_{i,j}: i,j\in \omega\}$ is  a witness for $\rho \notin P^-_2(\Lambda)$, there are  $i_{1},j_{1}$ such that $\rho(P\setminus K)\cap A_{i_{1},j_{1}}\neq \emptyset$ for every finite set $K$. 
Since
$\rho(P) \subseteq \rho(H) \setminus A_{i_0,j_0}$, we obtain $(i_0,j_0)\neq (i_1,j_1)$.  
Moreover, 
$\rho(F\setminus K)\cap A_{i_1,j_1}\neq\emptyset$
for every finite set $K$. Indeed, suppose there is a finite set $K$ with 
$\rho(F\setminus K)\cap A_{i_1,j_1}=\emptyset$.
Then 
there exists a finite set $L$ with $\rho(P\setminus L)\subseteq \rho(H\setminus K)\subseteq\rho(F\setminus K)$, so 
$\rho(P\setminus K)\cap A_{i_1,j_1}=\emptyset$, a contradiction.
\end{proof}

Now, the proof proceeds similarly to the proof of the previous case. However, for the sake of completeness, we present it here with minor modifications.
Since  $x_{i_{0},j_0}\neq x_{i_{1},j_1}$,  one of them has to be distinct from $p$. Without loss of generality let us assume $x_{i_{0},j_0} \neq p$.  Take a neighborhoods $U$ of $p$ with $x_{i_0,j_0}\notin U$.
From $\rho$-convergence of $g\restriction\rho(F)$ to $p$ there is some finite $K$ such that $g[\rho(F\setminus K)] \subseteq U$. However, $\rho(F\setminus K) \cap A_{i_0,j_0} \neq \emptyset$, so there is some  $\lambda_0 \in \rho(F \setminus K) $ with $ f(\lambda_0) = (i_0,j_0)$. Then, $g(\lambda_0)=x_{f(\lambda_0)}=x_{i_0,j_0} \notin U$, but on the other hand, $g(\lambda_0) \in g[\rho(F\setminus K)] \subseteq U$, a contradiction. 

Now, we show that every boring space is in $\FinBW(\rho)$ (in the proof we will only use the property $ P^-_1(\Lambda)$). 
Let $X$ be a boring space. 
By \cite[Proposition~3.2]{MR4584767},   there are $X_i\subseteq X$ for $i\leq n$ such that $X=\bigcup_{i \leq n}X_i $, $X_i \cap X_j=\emptyset $ whenever $i \neq j$, and for each $i$ there is an $x_i\in X$ which is a limit point of each injective convergent sequence in $X_i$. Take any $f: \Lambda \to X$.

If there exists some $ x \in X $ such that $ f^{-1}[\{x\}] \notin \I_{\rho}$, there exists some $F \in \mathcal{F}$ with  $\rho(F) \subseteq f^{-1}[\{x\}]$, so  $f \restriction \rho(F)$ is $\rho$-convergent to $x$.

Now, assume  $f^{-1}[\{x\}] \in \I_{\rho}$ for every $x\in X$. Since $\rho$ is partition regular, there is $i_0\leq n$ such that $f^{-1}[X_{i_0}]\notin\I_\rho$.
Then, $\{f^{-1}[\{x\}] :x \in X_{i_0}\}$ is  a partition of $f^{-1}[X_{i_0}]$ into sets belonging to $\I_{\rho}$. Since $\rho \in P_1^-(\Lambda)$,  there is $E\in\cF$ such that $\rho(E)\subseteq f^{-1}[X_{i_0}]$ and for each $x\in X_{i_0}$ there is some finite $K_x$ with $\rho(E \setminus K_x) \cap f^{-1}[\{x\}]=\emptyset $. 

We claim that $f \restriction \rho(E)$ is $\rho$-convergent to $x_{i_0}$. 
Let $U$ be a neighborhood of $x_{i_0}$. Then $X_{i_0}\setminus U$
 has to be finite (otherwise we would obtain an injective sequence in $X_{i_0}$ not convergent to $x_{i_0}$). Then for $K=\bigcup_{x\in X_{i_0}\setminus U}K_x$ we have $\rho(E \setminus K) \cap f^{-1}[X_{i_0}\setminus U]=\emptyset $, so $f[\rho(E\setminus K)]\subseteq U$.
 

$(\ref{thm:Boring-Spaces-via-BI:item-FinBW}) \implies (\ref{thm:Boring-Spaces-via-BI:item-Pminus})$
Suppose for the sake of contradiction that $\rho\notin P^-_1(\Lambda)$ or $\rho\in P^-_2(\Lambda)$.
If $\rho \notin P_1^-(\Lambda)$ then by Theorem ~\ref{thm:Finite-Spaces-via-FinSqrd}, $ \FinBW(\rho)$ contains only finite spaces, a contradiction.    
If $\rho \in P_2^-(\Lambda)$, then by Theorem~\ref{thm:rho-BI-vs-P2minus-vs-FinBW}, $\omega^2 +1 \in \FinBW(\rho)$.
But $\omega^2 +1$ is not a boring space, a contradiction.
\end{proof}

\begin{corollary}[{\cite[Theorem~6.5 and Proposition~4.4]{MR4584767}}]
\label{cor:faiusgfuagsfdl}
    Let  $\I$ be an ideal on $\omega$. The following conditions are equivalent.
    \begin{enumerate}
        \item  $\FinBW(\I)$  coincides with the class of all boring spaces.
        \item $\BI\leq_K\I$ and $\Fin^2 \not\leq_K \I.$
        \end{enumerate}
\end{corollary}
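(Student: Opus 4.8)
The plan is to specialize Theorem~\ref{thm:Boring-Spaces-via-BI} to the partition regular function $\rho_\I$ canonically associated with the ideal $\I$, and then to transport each of the relevant equivalent conditions from the language of partition regular functions back into the language of ideals. This mirrors exactly the strategy already used to deduce Corollary~\ref{cor:ksahdflkhsadflksad} from Theorem~\ref{thm:Finite-Spaces-via-FinSqrd}.

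First I would check that $\rho_\I$ satisfies the standing hypothesis of Theorem~\ref{thm:Boring-Spaces-via-BI}. By Proposition~\ref{prop:rho-versus-ideal}(\ref{prop:rho-versus-ideal:ideal-gives-rho}), the function $\rho_\I$ is partition regular and $\I_{\rho_\I}=\I$; moreover, as shown above, $\rho_\I\in (S_2)$ for every ideal $\I$. Hence Theorem~\ref{thm:Boring-Spaces-via-BI} applies to $\rho=\rho_\I$ and yields, in particular, the equivalence of
\begin{enumerate}
\item[(a)] $\rho_{\BI}\leq_K\rho_\I$ and $\rho_{\Fin^2}\not\leq_K\rho_\I$;
\item[(b)] $\FinBW(\rho_\I)$ coincides with the class of all boring spaces.
\end{enumerate}
Only these two of the three clauses of the theorem are needed here.

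The remaining work is purely translational. For the topological clause I would invoke the identification $\FinBW(\rho_\I)=\FinBW(\I)$, which turns (b) into condition~(1) of the corollary. For the Kat\v{e}tov clause I would use that $\rho_\J\leq_K\rho_\I\iff\J\leq_K\I$ for arbitrary ideals $\J$, so that $\rho_{\BI}\leq_K\rho_\I\iff\BI\leq_K\I$ and $\rho_{\Fin^2}\leq_K\rho_\I\iff\Fin^2\leq_K\I$; this turns (a) into condition~(2) of the corollary. Both identifications are the same ones already employed in the proof of Corollary~\ref{cor:ksahdflkhsadflksad}, and are supplied by \cite[Propositions~6.5(2), 7.5(2b) and 10.2(4)]{FKK-Unified}.

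I do not anticipate a genuine obstacle: all the substantive content has been pushed into Theorem~\ref{thm:Boring-Spaces-via-BI} together with the dictionary between $\rho_\I$ and $\I$. The only point deserving a moment's care is the verification that the theorem's hypothesis $\rho\in (S_2)$ is automatic for $\rho_\I$ — and this is precisely the earlier observation that $\rho_\I\in (S_2)$ for every ideal, which is why no extra assumption on $\I$ is required.
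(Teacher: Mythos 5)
Your proposal is correct and follows exactly the paper's own route: the paper's proof is precisely ``take $\rho=\rho_\I$ and apply Theorem~\ref{thm:Boring-Spaces-via-BI} together with \cite[Propositions~6.5(2), 7.5(2b) and 10.2(4)]{FKK-Unified}'', which supplies the same dictionary ($\FinBW(\rho_\I)=\FinBW(\I)$ and $\rho_\J\leq_K\rho_\I\iff\J\leq_K\I$) you describe. Your additional care in verifying the hypothesis $\rho_\I\in (S_2)$ matches the paper's earlier proposition stating this holds for every ideal, so nothing is missing.
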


\begin{proof}
Take $\rho=\rho_\I$ and apply  Theorem~\ref{thm:Boring-Spaces-via-BI}
and \cite[Propositions~6.5(2), 7.5(2b) and 10.2(4)]{FKK-Unified}.
\end{proof}

\begin{question}
Are there  partition regular functions 
$\eta_1$ and $\eta_2$ such that the following conditions are equivalent 
\begin{enumerate}
        \item  $\FinBW(\rho)$ coincides with the class of all boring spaces.
        \item $\eta_2\leq_K\rho$ and  $\eta_1 \not\leq_K \rho$
\end{enumerate}
for every partition regular function $\rho$?
\end{question}


\section{Brand-new class of  partition regular functions}

If for every partition regular function $\rho$ there would be an ideal $\J$  
with $\rho\approx_K\rho_\J$, then one could 
 deduce Theorem~\ref{thm:Finite-Spaces-via-FinSqrd} (Theorem~\ref{thm:Boring-Spaces-via-BI}, resp.) from Corollary \ref{cor:ksahdflkhsadflksad} (Corollary~\ref{cor:faiusgfuagsfdl}, resp.).
 However, as was mentioned in the preliminaries, the partition regular function $\FS$ satisfies $\FS\neq \rho_\J$ for any ideal $\J$, and from the proof it follows that ``$\neq$'' can be replaced by ``$\not\approx_K$''.
Below, we provide a collection of additional examples of partition regular functions of that sort (Proposition~\ref{prop:kaufland}).

\begin{definition}
\label{def:rho-brand-new}
    Let $\I$ be an  ideal on $\omega$,  $\mathcal{A}=\{A_{\alpha} : \alpha<\mathfrak{c}\}$ be an almost disjoint family on $\omega$,  $\overline{\mathcal{A}}=\{A\setminus K : A \in \mathcal{A}, K \in [\omega]^{<\omega} \}$, and $\I^+=\{ B_{\alpha} : \alpha <\mathfrak{c}\}$. 
    Let $\{P_n:n\in \omega\}$ be a partition of $\omega$ such that $P_n \in \I$ for each $n\in \omega$. 
    We define a function 
    $\rho^{(\I)} : \overline{\mathcal{A}} \to [\omega]^{\omega} $ by 
\begin{equation*}
\rho^{(\I)}(A_{\alpha}\setminus K)=B_{\alpha} \setminus \bigcup \{P_n: n<\max(K\cap A_{\alpha})\}
\end{equation*}
with the convention that $\max\emptyset =0$.
The definition of $\rho^{(\I)}$ depends on the family $\cA$, the enumeration of $\{B_\alpha:\alpha<\continuum\}$ and the partition $\{P_n:n\in \omega\}$, but for simplicity we just write $\rho^{(\I)}$.
\end{definition}

\begin{proposition}
\label{prop:brand-new-rho}
For every ideal $\I$, 
\begin{enumerate}
    \item $\rho^{(\I)}$ is partition regular,\label{prop:brand-new-rho:item-MRS}

    \item  $\I_{\rho^{(\I)}}= \I$,\label{prop:brand-new-rho:item-ideal-via-rho}

    \item $\rho^{(\I)}$ has small accretions,\label{prop:brand-new-rho:item-small-accretions}

    \item if  $\I \in P^-$, then  $\rho^{(\I)} \in P^-$ (in Proposition~\ref{prop:P-minus-versus-P-minus}, we show that in general one cannot reverse this implication),\label{prop:brand-new-rho:item-Pminus}

\item  $\rho^{(\I)} \in (S_2)$.\label{prop:brand-new-rho:item-S-two}

\end{enumerate}
\end{proposition}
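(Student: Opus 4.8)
The plan is to prove Proposition~\ref{prop:brand-new-rho} one item at a time, since the five claims build on one another. The underlying structure to keep in mind is that $\rho^{(\I)}$ sends $A_\alpha\setminus K$ to the fixed $\I$-positive set $B_\alpha$ with an initial segment of the $P_n$-blocks removed, where the number of removed blocks is governed by $\max(K\cap A_\alpha)$. The key mechanical fact is that removing a larger finite set $K$ from $A_\alpha$ can only increase $\max(K\cap A_\alpha)$, hence remove \emph{more} $P_n$-blocks, hence \emph{shrink} $\rho^{(\I)}(A_\alpha\setminus K)$; this monotonicity is what drives most of the argument.

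For item~(\ref{prop:brand-new-rho:item-MRS}) I would verify the three axioms (M), (R), (S) of Definition~\ref{def:partition-regular}. For (M), if $A_\alpha\setminus K'\subseteq A_\beta\setminus K$ with both nonempty then almost disjointness forces $\alpha=\beta$ and $\max(K\cap A_\alpha)\leq\max(K'\cap A_\alpha)$, so more blocks are removed from the image of the smaller set, giving the required containment. Property (R) should follow because the range of $\rho^{(\I)}$ on sets of the form $A_\alpha\setminus K$ is, up to a finite-block truncation, the single set $B_\alpha$: if $\rho^{(\I)}(A_\alpha\setminus K)=C\cup D$ is split, then one of $C,D$ meets infinitely many of the surviving $P_n$-blocks and one shows a further truncation $A_\alpha\setminus K'$ lands inside it. For (S), given $E=A_\alpha\setminus K$ I take $F=E$ itself and check that every element of $\rho^{(\I)}(F)$ eventually leaves $\rho^{(\I)}(F\setminus L)$ as $L$ grows, which again uses that enlarging the removed finite set eventually deletes the block containing any fixed point. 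Item~(\ref{prop:brand-new-rho:item-ideal-via-rho}), that $\I_{\rho^{(\I)}}=\I$, reduces to showing $A\in\I_{\rho^{(\I)}}$ iff no truncation $B_\alpha\setminus(\text{finitely many blocks})$ is contained in $A$; since every $B_\alpha$ is $\I$-positive and differs from its truncations by a set in $\I$ (a finite union of $P_n\in\I$), a set swallows some $B_\alpha\setminus(\text{blocks})$ exactly when it is $\I$-positive, using that $\{B_\alpha\}$ enumerates all of $\I^+$.

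For item~(\ref{prop:brand-new-rho:item-small-accretions}), small accretions amounts to checking that $\rho^{(\I)}(F)\setminus\rho^{(\I)}(F\setminus K)\in\I$ for every finite $K$: this difference is contained in the finite union of blocks $\bigcup\{P_n: n<\max((K\cup K_0)\cap A_\alpha)\}$ that got newly removed, and a finite union of $P_n$'s lies in $\I$. Item~(\ref{prop:brand-new-rho:item-Pminus}) transfers $P^-$ from $\I$ to $\rho^{(\I)}$; given a decreasing sequence with differences in $\I_{\rho^{(\I)}}=\I$ and $\I$-positive top, I would apply $\I\in P^-$ to produce an $\I$-positive $B=B_\alpha$ almost contained in each level, then exhibit a truncation of $A_\alpha$ whose image tracks the levels, converting ``almost contained'' into the block-removal formulation. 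Finally item~(\ref{prop:brand-new-rho:item-S-two}) follows immediately from the already-proved items (\ref{prop:brand-new-rho:item-small-accretions}) and (\ref{prop:brand-new-rho:item-Pminus}) together with the third part of the earlier proposition (``if $\rho\in P^-$ and has small accretions then $\rho\in(S_2)$'')---but only when $\I\in P^-$; to get $(S_2)$ for \emph{every} $\I$ one must argue directly, which I expect to be the main obstacle. The direct argument should mimic the proof of that earlier proposition: given $B\notin\I$ with $B\subseteq\rho^{(\I)}(F)$, use that $B$ is $\I$-positive to find it enumerated as some $B_\beta$, but the subtlety is that $B_\beta$ need not sit inside a single $A_\alpha$-fiber, so one must instead build the witness $G$ by choosing an appropriate $A_\gamma\setminus K$ whose image is forced into $B$ using the structure of the blocks; pinning down this choice without assuming $P^-$ is where the real work lies, and I would handle it by exploiting that $B$, being $\I$-positive and a subset of the block-truncated $B_\alpha$, still meets infinitely many surviving blocks, letting me diagonalize a truncation of $A_\alpha$ into $B$.
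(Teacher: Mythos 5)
There is a genuine gap, and it occurs twice, in both cases stemming from the same misconception about what the values of $\rho^{(\I)}$ can be. First, your verification of (R) in item (1) would fail. You propose that when $\rho^{(\I)}(A_\alpha\setminus K)=C\cup D$, some further truncation $A_\alpha\setminus K'$ of the \emph{same} $A_\alpha$ has image inside one of the pieces. But every truncation of $A_\alpha$ has image of the exact form $B_\alpha\setminus\bigcup_{n<m}P_n$, i.e.\ a tail of $B_\alpha$ in the block structure; if, say, $C$ and $D$ each contain half of $B_\alpha\cap P_n$ for every $n$, then no such tail is contained in either piece, even though both pieces meet infinitely many surviving blocks (a condition that is in any case weaker than the one you actually need, namely $\I$-positivity). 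The paper's move is different and essential: since the removed part $\bigcup_{n<m}P_n$ is a finite union of members of $\I$, the set $B_\alpha\setminus\bigcup_{n<m}P_n$ is $\I$-positive, so one of $C,D$ is $\I$-positive and hence equals $B_\beta$ for some \emph{other} index $\beta$ in the enumeration $\I^+=\{B_\gamma:\gamma<\continuum\}$; then $\rho^{(\I)}(A_\beta)=B_\beta$ (no blocks removed, by the convention $\max\emptyset=0$) is the required value. You clearly know this mechanism, since you invoke precisely the enumeration of $\I^+$ in item (2), but as sketched your (R) argument is wrong.

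Second, item (5), which you explicitly flag as unresolved (``where the real work lies''), is settled by the same jump-to-another-index trick, and your proposed repair points the wrong way. There is no ``fiber'' subtlety: $B$ lives on the range side $\Lambda=\omega$ while the sets $A_\gamma$ live on the domain side, and $(S_2)$ demands no containment between $G$ and $F$, only containment of images. Concretely, take $F=E=A_\alpha\setminus M$ as the witness in the definition of $(S_2)$; given $B\notin\I$ with $B\subseteq\rho^{(\I)}(F)\subseteq B_\alpha$, write $B=B_\beta$ and put $G=A_\beta$, so that $\rho^{(\I)}(G)=B_\beta=B$ on the nose. For the approximation condition, given a finite $K$ pick any $a\in A_\beta$ with $a\geq\max\bigl((M\cup K)\cap A_\alpha\bigr)$ and set $L=\{a\}$; then $\rho^{(\I)}(A_\beta\setminus L)=B_\beta\setminus\bigcup_{n<a}P_n\subseteq B_\alpha\setminus\bigcup_{n<\max((M\cup K)\cap A_\alpha)}P_n=\rho^{(\I)}(F\setminus K)$, using only $B_\beta\subseteq B_\alpha$. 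No $P^-$ assumption and no diagonalization is needed, and ``diagonalizing a truncation of $A_\alpha$ into $B$'' is impossible for the same reason as in (R): truncations of $A_\alpha$ only ever produce tails of $B_\alpha$, never subsets of a proper positive $B\subseteq B_\alpha$. Your treatments of items (2), (3), (4) and of the (M) and (S) axioms in item (1) do match the paper's proof.
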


\begin{proof}
(\ref{prop:brand-new-rho:item-MRS})
Since $\overline{\cA}$ is  a nonempty family of infinite subsets of $\omega$ such that $F\setminus K\in \overline{\cA}$ for every $F\in \overline{\cA}$ and a finite set $K$, we only need to show that $\rho^{(\I)}$ satisfies 
properties (M), (R) and (S) from Definition~\ref{def:partition-regular}.

(M)
    Take any two distinct elements of $\overline{\cA}$, say  $A_{\alpha}$ and $A_{\beta}$ for some 
    $\alpha,\beta $ and some finite $K_1, K_2$, such that $A_{\alpha}\setminus K_1 \subseteq A_{\beta}\setminus K_2$.  Since $\cA$ is an almost disjoint family, $\alpha = \beta$. Note that if $A_{\alpha}\setminus K_1 \subseteq A_{\alpha}\setminus K_2$ then $\max(K_1\cap A_{\alpha}) \geq \max(K_2\cap A_{\alpha})$.
     It follows that 
\begin{equation*}
B_{\alpha} \setminus \bigcup_{n<\max(K_1\cap A_{\alpha})}P_n \subseteq B_{\alpha} \setminus \bigcup_{n<\max(K_2\cap A_{\alpha})}P_n.
\end{equation*}
    
(R)  
    Take any $\alpha$ and any finite set $K$ such that $  C \cup D=B_{\alpha}\setminus \bigcup_{n<\max(K\cap A_{\alpha})}P_n$ for some $C,D \in [\omega]^{\omega}$. Since $ B_{\alpha}\notin\I$ and $P_n\in\I$ for all $n$, either $C\notin \I$ or $ D \notin \I$. Without loss of generality, assume $D \notin \I$. Then, $D=B_{\beta}$ for some $\beta$ and $\rho^{(\I)}(A_{\beta})=B_{\beta}\subseteq D. $ 
    
(S)   
     Take any $\alpha$ and any finite set $K_1$. We need to find $\beta$ and a finite set $K_2$ with $A_{\beta}\setminus K_2 \subseteq A_{\alpha}\setminus K_1$ such that for any element $x \in B_{\beta}\setminus \bigcup_{n<\max(K_2\cap A_{\beta})}P_n $ there is some finite set $L$ for which $x \notin B_{\beta}\setminus \bigcup_{n<\max((K_2\cup L)\cap A_{\beta})}P_n $. As in the proof above, we know that for the first inclusion to be true, $\alpha$ must equal $\beta$. Let $K_2=K_1$. Take any $x \in B_{\beta}\setminus \bigcup_{n<\max(K_2\cap A_{\beta})}P_n$. Since $(P_n)_n$ is a partition of $\omega$ there is some $n_0$ with $x\in P_{n_0}$. Choose $n_1 \in A_{\beta}$ with  $\max(K_2\cup\{n_0\})<n_1$ and let $L=n_1+1$. Then, $L$ is as required.  

(\ref{prop:brand-new-rho:item-ideal-via-rho}) 
Straightforward.

(\ref{prop:brand-new-rho:item-small-accretions}) Take any $\alpha$ and any finite set $L$. We will show that the set $A_{\alpha}\setminus L$ has small accretions. Take any finite set $K$ and note that 
\begin{equation*}
    \begin{split}
 &   \left(B_{\alpha}\setminus \bigcup_{n<\max(L\cap A_{\alpha})}P_n\right)
\setminus \left(B_{\alpha}\setminus \bigcup_{n<\max((L\cup K)\cap A_{\alpha})}P_n\right)
\\
&\subseteq B_\alpha\cap \bigcup_{n<\max((K\setminus L)\cap A_{\alpha})}P_n \in\I=\I_{\rho^{(\I)}}.
    \end{split}
\end{equation*}

(\ref{prop:brand-new-rho:item-Pminus}) Take any  disjoint sets $(A_n)_{n\in \omega}$ belonging to $\I_{\rho^{(\I)}}$ such that  $\bigcup_{n \in \omega} A_n \notin \I_{\rho^{(\I)}}$. Since $\I=\I_{\rho^{(\I)}}$ and $\I\in P^-$, there exists some set $ C \notin \I$ such that $C\subseteq \bigcup_{n \in \omega} A_n $ and for each $ n \in \omega \ $ there is some finite set $K_n $ such that $(C \setminus K_n) \cap A_n=\emptyset$. Let $\alpha$ be such that $ B_{\alpha}=C $. Fix any $n\in \omega $. 
Let $n_0\in \omega$ be such that 
$K_n\subseteq \bigcup_{i<n_0}P_i$.
Let $L=\{\min(A_\alpha\setminus n_0)\}$. Then,  
\begin{equation*}
 B_{\alpha}\setminus \bigcup_{i<\max(L\cap A_{\alpha})}P_i \subseteq  B_{\alpha}\setminus K_n =C\setminus K_n.
 \end{equation*}

(\ref{prop:brand-new-rho:item-S-two})
Take any $\alpha $, a finite set $M$ and $B\notin \I$ such that $ B \subseteq B_{\alpha}\setminus \bigcup_{n<\max(M\cap A_{\alpha})}P_n $. Then, $B=B_{\beta}$ for some $\beta $. Take any finite set $K$. We want to find some finite set $L$ such that 
\begin{equation*}
B_{\beta}\setminus \bigcup_{n<\max(L\cap A_{\beta})}P_n \subseteq B_{\alpha}\setminus \bigcup_{n<\max((M\cup K)\cap A_{\alpha})}P_n.
\end{equation*}
Since $B_{\beta} \subseteq B_{\alpha}$, it is enough to satisfy the condition that  $\max(L\cap A_{\beta})\geq \max((M\cup K)\cap A_{\alpha})$. It suffices to take $L=\{a\}$ for some $a \in A_{\beta}$ with $a\geq \max((M\cup K)\cap A_{\alpha}) $.
\end{proof}

For an ideal $\I$ on $\omega$, we define the ideal $\Fin \otimes \I$ on $\omega\times\omega$ by
\begin{equation*}
A\in \Fin\otimes\I \iff \{n\in \omega: \{k\in \omega:(n,k)\in A\}\notin \I\}\in \Fin.
\end{equation*}

For an ideal $\I$ on $\omega$, an almost disjoint family  $\mathcal{A}=\{A_{\alpha} : \alpha<\mathfrak{c}\}$ on $\omega$ and an enumeration  $\I^+=\{ B_{\alpha} : \alpha <\mathfrak{c}\}$, by $\rho^{(\Fin \otimes \I)}$ we  denote the function as defined above with the partition 
given by $P_n=\{n\}\times \omega$ for each $n\in \omega$.

\begin{proposition}
\label{prop:P-minus-versus-P-minus}
For any ideal $\I$ on $\omega$,  
\begin{enumerate}
    \item $\Fin \otimes \I \notin P^-$, 
    \item $\rho^{(\Fin \otimes \I)} \in P^-$.
\end{enumerate}
\end{proposition}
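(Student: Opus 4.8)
The plan is to verify both parts through the combinatorial characterization of $P^-$ stated in the preliminaries (the reformulation via pairwise disjoint families whose union is positive), rather than through the decreasing-sequence definition. For part (1) I would exhibit an explicit counterexample: take $C_n = \{n\}\times\omega \subseteq \omega\times\omega$. Each $C_n$ has a single nonempty column equal to $\omega$, hence exactly one column outside $\I$, so $C_n \in \Fin\otimes\I$; the $C_n$ are pairwise disjoint; and $\bigcup_n C_n = \omega\times\omega$ has every column equal to $\omega\notin\I$, so $\bigcup_n C_n \notin \Fin\otimes\I$. If some $B$ satisfied $B\cap C_n$ finite for every $n$, then every column of $B$ would be finite, hence in $\I$, forcing $B\in\Fin\otimes\I$; so no $\I$-positive $B$ can meet all $C_n$ finitely, and the characterization fails, giving $\Fin\otimes\I\notin P^-$.

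For part (2) the first step is to unwind the definition. By Proposition~\ref{prop:brand-new-rho}(\ref{prop:brand-new-rho:item-ideal-via-rho}) we have $\I_{\rho^{(\Fin\otimes\I)}}=\Fin\otimes\I$, and since $P_n=\{n\}\times\omega$, the value $\rho^{(\Fin\otimes\I)}(A_\alpha\setminus K)$ is just $B_\alpha$ with its first $m$ columns deleted, where $m=\max(K\cap A_\alpha)$. Hence for a fixed target $B\in(\Fin\otimes\I)^+$ we may take the index $\alpha$ with $B_\alpha=B$ and set $F=A_\alpha$; because $A_\alpha$ is infinite, $\max(K\cap A_\alpha)$ can be made arbitrarily large, so the demand ``for each $n$ there is a finite $K$ with $\rho^{(\Fin\otimes\I)}(F\setminus K)\cap C_n=\emptyset$'' translates into the purely combinatorial statement that each $C_n$ meets only finitely many columns of $B$. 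Thus it suffices, given pairwise disjoint $C_n\in\Fin\otimes\I$ with $\bigcup_n C_n\notin\Fin\otimes\I$, to build $B\in(\Fin\otimes\I)^+$ such that $\{i: B_i\cap (C_n)_i\neq\emptyset\}$ is finite for every $n$.

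The construction is a diagonalization over columns. Write $G=\{i:(\bigcup_n C_n)_i\notin\I\}$, infinite by hypothesis, and for each $n$ let $E_n=\{i:(C_n)_i\notin\I\}$, finite because $C_n\in\Fin\otimes\I$. Choose columns $i_0<i_1<\cdots$ recursively with $i_k\in G\setminus\bigcup_{n<k}E_n$ (possible since $G$ is infinite and $\bigcup_{n<k}E_n$ is finite), and set $B=\bigcup_k\{i_k\}\times B_{i_k}$ with $B_{i_k}=\bigcup_{m\geq k}(C_m)_{i_k}$. The avoidance condition ensures $(C_n)_{i_k}\in\I$ for all $n<k$, so deleting these finitely many $\I$-sets from the $\I$-positive column $(\bigcup_n C_n)_{i_k}$ leaves $B_{i_k}\notin\I$; hence $B$ has infinitely many positive columns and lies in $(\Fin\otimes\I)^+$. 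Disjointness of the $C_n$ gives $B_{i_k}\cap (C_n)_{i_k}=\emptyset$ whenever $k>n$, so $C_n$ meets $B$ only in columns $i_0,\dots,i_n$. Finishing, for each $n$ one picks $K_n=\{a\}$ with $a\in A_\alpha$ and $a>i_n$, so that $\rho^{(\Fin\otimes\I)}(F\setminus K_n)$ deletes all columns below $a$ and is disjoint from $C_n$.

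The main obstacle to anticipate is precisely why the naive approach fails: inside a single positive column $(\bigcup_n C_n)_i\notin\I$ it can happen that every piece $(C_n)_i$ lies in $\I$ (a countable union of $\I$-sets need not be in $\I$), so one cannot simply assign one index $C_n$ to each column and take that piece as $B_i$. The recursion's avoidance set $\bigcup_{n<k}E_n$ is what circumvents this: it guarantees that the finitely many pieces discarded from column $i_k$ are genuinely in $\I$, leaving an $\I$-positive remainder while confining each $C_n$ to an initial segment of the chosen columns. Everything else — partition regularity and the identity $\I_{\rho^{(\Fin\otimes\I)}}=\Fin\otimes\I$ — is already supplied by Proposition~\ref{prop:brand-new-rho} and is pure bookkeeping.
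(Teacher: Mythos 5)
Your proof is correct and takes essentially the same route as the paper's: part (1) is the same column/tail counterexample, and in part (2) both arguments diagonalize by choosing, for each $n$, an $\I$-positive column far to the right and assembling $B$ from the traces of the $n$-th tail $\bigcup_{m\geq n}C_m$ on those columns, then translating removal of a finite set in $\rho^{(\Fin\otimes\I)}$ into deleting an initial block of columns. The only difference is cosmetic -- you route through the partition characterization of $P^-$ from the preliminaries (whence your finite avoidance sets $E_n$), while the paper uses the decreasing-sequence definition directly and picks an $\I$-positive column $i_n>n$ of $C_n$; note that your insistence on $i_0<i_1<\cdots$ is in fact slightly more careful than the paper's bare requirement $i_n>n$, since the final containment argument needs all columns surviving the cut for $C_n$ to come from tails with index at least $n$.
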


 \begin{proof}
(1)
It is enough to take $C_n=(\omega\setminus n) \times \omega$ for each $n\in \omega$ as a witness for the lack of the property $P^-$.

    (2)
    Take any decreasing sequence $(C_n)_{n\in\omega}$ of $\Fin \otimes \I$-positive sets such that the difference $C_n\setminus C_{n+1}$ is in $\Fin \otimes \I$ for each $n$. We need to find $\alpha$ and a finite set $L$  such that for each $n$ there is a finite set $K_n$ with 
\begin{equation*}
B_{\alpha}\setminus \bigcup_{k<\max((K_n\cup L) \cap A_{\alpha})}(\{k\} \times \omega)=B_{\alpha}\setminus (\max((K_n\cup L)\cap A_{\alpha}) \times \omega)  \subseteq C_n.
\end{equation*}
    For each $n$,  $C_n \notin \Fin \otimes \I$, so 
     there is $i_n>n$ such that 
\begin{equation*}
(C_n)_{(i_n)} = \{k:(i_n,k)\in C\} \notin \I.
\end{equation*}
    Take $L$ to be an empty set and let $\alpha$ be such that 
\begin{equation*}
B_{\alpha}=\bigcup_{n<\omega}\{i_n\}\times (C_n)_{(i_n)} \text{ \ \  and \ \   } K=\{\min(A_\alpha\setminus i_n)\}.
\end{equation*}
Then, using the fact that $C_m\subseteq C_n$ for $m>n$, we get $B_{\alpha}\setminus (\max(K_n \cap A_{\alpha}) \times \omega) \subseteq C_n$ for every $n$. 
 \end{proof}

    \begin{proposition}
    \label{prop:kaufland}
$\rho^{(\Fin\otimes\I)} \not\approx_K \rho_\J$
for any ideals $\I$ and $\J$ on $\omega$.

\end{proposition}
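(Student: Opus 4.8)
The plan is to argue by contradiction and to exploit the single feature that makes $\rho^{(\Fin\otimes\I)}$ exceptional: it satisfies the \emph{local} property $P^-$ (Proposition~\ref{prop:P-minus-versus-P-minus}(2)), whereas its associated ideal $\I_{\rho^{(\Fin\otimes\I)}}=\Fin\otimes\I$ (Proposition~\ref{prop:brand-new-rho}(2)) sits Kat\v{e}tov-above $\Fin^2$. For a coding $\rho_\J$ these two phenomena are forced to coincide, so they cannot both persist under $\approx_K$. Accordingly, I would fix ideals $\I,\J$ on $\omega$ and assume toward a contradiction that $\rho^{(\Fin\otimes\I)}\approx_K\rho_\J$.

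First I would record that $P^-$ implies $P^-(\Lambda)$: in a decreasing sequence $\Lambda=A_0\supseteq A_1\supseteq\cdots$ with consecutive differences in $\I_\rho$, the set $A_0=\Lambda$ is automatically $\I_\rho$-positive, so the definition of $P^-$ applies verbatim and produces exactly the witness required for $P^-(\Lambda)$. Hence $\rho^{(\Fin\otimes\I)}\in P^-(\Lambda)$, and Theorem~\ref{thm:Finite-Spaces-via-FinSqrd} (the equivalence $(\ref{thm:Finite-Spaces-via-FinSqrd:item-FinBW})\Leftrightarrow(\ref{thm:Finite-Spaces-via-FinSqrd:item-Pminus})$, which is the cited result and uses no sparseness hypothesis) shows that $\FinBW(\rho^{(\Fin\otimes\I)})$ is strictly larger than the class of finite spaces. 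Since $\leq_K$ reverses $\FinBW$-inclusion and $\FinBW(\rho_\J)=\FinBW(\J)$ (\cite[Theorem~11.1 and Proposition~10.2]{FKK-Unified}), the assumption $\rho^{(\Fin\otimes\I)}\approx_K\rho_\J$ gives $\FinBW(\J)=\FinBW(\rho^{(\Fin\otimes\I)})$, which is again strictly larger than the class of finite spaces; so by Corollary~\ref{cor:ksahdflkhsadflksad} we obtain $\Fin^2\not\leq_K\J$.

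For the contradiction I would derive the reverse inequality $\Fin^2\leq_K\J$. The elementary monotonicity $\rho_2\leq_K\rho_1\Rightarrow\I_{\rho_2}\leq_K\I_{\rho_1}$ holds with the \emph{same} witnessing map: feed the Kat\v{e}tov condition for functions the empty set $K_1=\emptyset$ and use that the domains $\cF$ are closed under deleting finite sets. Applied to $\rho^{(\Fin\otimes\I)}\leq_K\rho_\J$, and recalling $\I_{\rho^{(\Fin\otimes\I)}}=\Fin\otimes\I$ (Proposition~\ref{prop:brand-new-rho}(2)) and $\I_{\rho_\J}=\J$ (Proposition~\ref{prop:rho-versus-ideal}(2)), this yields $\Fin\otimes\I\leq_K\J$. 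On the other hand $\Fin^2=\Fin\otimes\Fin\leq_K\Fin\otimes\I$ via the identity map: if $A\notin\Fin\otimes\I$ then infinitely many columns of $A$ lie outside $\I\supseteq\Fin$, hence are infinite, so $A\notin\Fin\otimes\Fin$; thus $(\Fin\otimes\I)^+\subseteq(\Fin\otimes\Fin)^+$ and the identity sends positive sets to positive sets. Transitivity of $\leq_K$ now gives $\Fin^2\leq_K\J$, contradicting the previous paragraph.

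The real content — and the only nonroutine step — is the observation $\Fin^2\leq_K\Fin\otimes\I$, equivalently $\Fin\otimes\I\notin P^-(\omega)$ by Corollary~\ref{cor:ksahdflkhsadflksad}: this is precisely the gap between $P^-$ holding for the \emph{function} $\rho^{(\Fin\otimes\I)}$ and failing for the \emph{ideal} $\Fin\otimes\I$, and it is exactly what powers the non-equivalence. The remaining ingredients (the monotonicity lemma and the $\FinBW$-facts of \cite{FKK-Unified}) are routine, and one should merely confirm that the only appeal to Theorem~\ref{thm:Finite-Spaces-via-FinSqrd} is through the $(S_1)$-free equivalence $(\ref{thm:Finite-Spaces-via-FinSqrd:item-FinBW})\Leftrightarrow(\ref{thm:Finite-Spaces-via-FinSqrd:item-Pminus})$, so that no sparseness property of $\rho^{(\Fin\otimes\I)}$ needs to be verified.
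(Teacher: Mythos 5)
Your proof is correct, and while its first half coincides with the paper's argument, its second half takes a genuinely different route. The paper proves the proposition by a dichotomy on whether $\J\in P^-(\omega)$: when $\J\notin P^-(\omega)$ it argues exactly as you do (since $\rho^{(\Fin\otimes\I)}\in P^-$, some infinite space lies in $\FinBW(\rho^{(\Fin\otimes\I)})$, while $\FinBW(\J)$ contains only finite spaces by Corollary~\ref{cor:ksahdflkhsadflksad}, so $\rho_\J\not\leq_K\rho^{(\Fin\otimes\I)}$); when $\J\in P^-(\omega)$ it shows $\rho^{(\Fin\otimes\I)}\not\leq_K\rho_\J$ by a hands-on analysis of an arbitrary candidate witness $f\colon\omega\to\omega\times\omega$, splitting on whether some $f^{-1}[\{i\}\times\omega]$ is $\J$-positive and in either case producing a $\J$-positive set whose image lies in $\Fin\otimes\I$, so that no $B_\alpha$ with finitely many columns removed can fit inside it. You replace this bespoke combinatorial step by two soft facts: the descent lemma that a Kat\v{e}tov reduction between partition regular functions yields, via the same map, a Kat\v{e}tov reduction between the induced ideals --- which is indeed valid under the paper's conventions, since if $\rho_1(F_1)\subseteq A$ then taking $K_1=\emptyset$ gives $F_2$ and a finite $K_2$ with $\rho_2(F_2\setminus K_2)\subseteq f[\rho_1(F_1)]\subseteq f[A]$ and $F_2\setminus K_2\in\cF_2$, so $f[A]\in\I_{\rho_2}^+$ --- together with the inclusion $\Fin^2\subseteq\Fin\otimes\I$ (hence $\Fin^2\leq_K\Fin\otimes\I$) and transitivity, yielding $\Fin^2\leq_K\J$, in contradiction with $\Fin^2\not\leq_K\J$ obtained from the other reduction. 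In effect you offload the paper's explicit combinatorics onto the already established equivalence $\Fin^2\leq_K\J\iff\J\notin P^-(\omega)$ of Corollary~\ref{cor:ksahdflkhsadflksad}: your version is shorter and more modular (the descent lemma is reusable and makes transparent that for this direction only the induced ideals $\I_{\rho^{(\Fin\otimes\I)}}=\Fin\otimes\I$ and $\I_{\rho_\J}=\J$ matter), at the price of being less self-contained, whereas the paper's direct argument exhibits a concrete obstruction for every candidate witness and pins down, for each $\J$, which of the two reductions fails. The two side points you flag also check out: $P^-$ implies $P^-(\Lambda)$ because $A_0=\Lambda\in\I_\rho^+$, and only the $(S_1)$-free equivalence $(1)\iff(3)$ of Theorem~\ref{thm:Finite-Spaces-via-FinSqrd} is needed (the paper invokes $(S_1)$ in its Case (1), which is harmless anyway since $\rho^{(\Fin\otimes\I)}\in(S_2)$ by Proposition~\ref{prop:brand-new-rho}).
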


\begin{proof}
We have two cases: (1) $\J\not\in P^-(\omega)$, (2) $\J\in P^-(\omega)$.

\emph{Case (1).}
If $ \J \notin P^-(\omega)$ then there is no infinite space  $ X \in \FinBW(\J)$ by Corollary~\ref{cor:ksahdflkhsadflksad}.
However 
$\rho^{(\fin\otimes\I)} \in P^- (\omega\times \omega)$
and
$\rho^{(\fin\otimes\I)} \in (S_1)$, so there is some infinite space $ X \in \FinBW(\rho^{(\fin\otimes\I)})$ by Theorem~\ref{thm:Finite-Spaces-via-FinSqrd}. Therefore $\FinBW(\rho^{(\fin\otimes\I)})\nsubseteq \FinBW(\rho_\J)$, so $\rho_\J\nleq_K \rho^{(\fin\otimes\I)}$  by  \cite[Theorem~11.1 and Proposition~10.2]{FKK-Unified}.

\emph{Case (2).}
We show that  $\rho^{(\Fin\otimes\I)} \nleq_K \rho_\J$. Let us  take any function $f: \omega  \to \omega\times\omega $.
    
\emph{Case (2a).} There is some $i\in\omega$ with $f^{-1}[\{i\} \times \omega] \notin \J$. 

Then, $f[ f^{-1}[\{i\} \times \omega]] \in \Fin \otimes \I $. Therefore for any $\alpha$ and any finite $K$, it cannot be true that $ B_{\alpha}\setminus (\max((K\cup L)\cap A_{\alpha}) \times \omega \subseteq f[ f^{-1}[\{i\} \times \omega]]$ for any finite $L$, so $f$ cannot witness the Kat\v{e}tov reduction.
    
\emph{Case (2b).} For all $i\in\omega$, $f^{-1}[\{i\} \times \omega] \in \J$.

Then, $(f^{-1}[\{i\} \times \omega])$ defines a partition of $\omega$ into sets belonging to $\J$ and since $\J \in P^-(\omega)$ there exists some $F \notin \J$ such that for every $i$ there is some finite set $K_i$ with $(F\setminus K_i) \cap f^{-1}[\{i\} \times \omega]= \emptyset $. Then, $f[F\setminus K_i] \cap (\{i\} \times \omega)= \emptyset$, so $f[F] \in \Fin \otimes \I$. Therefore for any $\alpha$ and any finite set $K$,  it cannot be true that $B_{\alpha}\setminus (\max((K\cup L)\cap A_{\alpha}) \times \omega \subseteq f[F]$ for any finite set $L$, so again $f$ cannot witness the Kat\v{e}tov reduction.
\end{proof}


\section{Critical \texorpdfstring{$\rho$}{rho} for the class of all metric compact spaces}
\label{sec:kajshfdlsa}

\begin{lemma}\label{6.1}
    For every partition regular $\rho$ and every $f\colon\Lambda \to [0,1]$ there is $g\colon\Lambda \to [0,1]\cap \Q$ such that $g$ is an injection and
\begin{equation*}
\forall{F\in\F}\, \forall{p\in[0,1]} ( f\restriction \rho(F) \text{ is } \rho\text{-convergent to $p$} \iff g\restriction \rho(F) \text{ is } \rho\text{-convergent to $p$}).
\end{equation*}
\end{lemma}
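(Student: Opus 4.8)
The plan is to obtain $g$ as a small, injective, rational‑valued perturbation of $f$ whose defect $|g-f|$ decays along a fixed enumeration of $\Lambda$, so that the perturbation is negligible on every tail and therefore preserves $\rho$‑limits (and their values) in both directions simultaneously.

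First I would fix an enumeration $\Lambda=\{\lambda_n:n\in\omega\}$ and define $g$ recursively. Having chosen $g(\lambda_0),\dots,g(\lambda_{n-1})$, I pick $g(\lambda_n)\in\Q\cap[0,1]$ with $|g(\lambda_n)-f(\lambda_n)|<2^{-n}$ and $g(\lambda_n)\notin\{g(\lambda_0),\dots,g(\lambda_{n-1})\}$. This is possible since $\Q\cap[0,1]$ is dense, so every neighborhood of $f(\lambda_n)$ contains infinitely many rationals, all but finitely many avoiding the finite set of values already used. By construction $g$ is an injection into $\Q\cap[0,1]$ with $|g(\lambda_n)-f(\lambda_n)|<2^{-n}$ for every $n$, and since this defect tends to $0$, any limit witnessed by $g$ on a tail must agree with the corresponding limit of $f$, so the two sides of the biconditional can only share the same $p$.

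The verification of the equivalence rests on one structural fact, which I expect to be the crux: for every $F\in\F$ and every finite $S\subseteq\Lambda$ there is a finite $K\in[\Omega]^{<\omega}$ with $\rho(F\setminus K)\cap S=\emptyset$; equivalently, no point of $\rho(F)$ survives in all tails, i.e. $\bigcap_{K}\rho(F\setminus K)=\emptyset$. Granting this, the two implications are symmetric because the defect bound is two‑sided and vanishing. Concretely, assume $f\restriction\rho(F)$ is $\rho$‑convergent to $p$ and let $U$ be a neighborhood of $p$; choose $\delta>0$ with $(p-\delta,p+\delta)\cap[0,1]\subseteq U$ and a finite $K_1$ with $f[\rho(F\setminus K_1)]\subseteq(p-\delta/2,p+\delta/2)$. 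Fix $N$ with $2^{-N}<\delta/2$. Every $\lambda_n\in\rho(F\setminus K_1)$ with $n\ge N$ then satisfies $g(\lambda_n)\in(p-\delta,p+\delta)$, while the finitely many indices $n<N$ form a finite set $S\subseteq\Lambda$ which, by the flushing fact, is removed by some finite $K_2$ with $\rho(F\setminus K_2)\cap S=\emptyset$. Taking $K=K_1\cup K_2$ and using monotonicity~(M) (so that $\rho(F\setminus K)\subseteq\rho(F\setminus K_1)\cap\rho(F\setminus K_2)$) yields $g[\rho(F\setminus K)]\subseteq U$, hence $g\restriction\rho(F)$ is $\rho$‑convergent to $p$. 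The reverse implication is word‑for‑word the same with $f$ and $g$ interchanged, giving the biconditional for every $F$ and every $p$.

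The delicate point is precisely the flushing fact for an \emph{arbitrary} $F$. This sparseness is exactly the phenomenon that property~(S) is designed to capture; however, (S) as stated only produces, below each $E$, a single set on which every point eventually leaves the tails, so the step I expect to require the most care is upgrading this to the statement that $\bigcap_{K}\rho(F\setminus K)=\emptyset$ holds for every $F\in\F$ (note that a point persisting in all tails would have fixed $f$‑value equal to the limit $p$, which is incompatible with $g$ being injective and $\Q$‑valued). I would therefore isolate this flushing property as a preliminary observation, derived from~(S) (together with~(M) and~(R)); once it is established, the perturbation estimate and the two symmetric directions above are routine.
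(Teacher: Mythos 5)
Your proposal is correct and takes essentially the same route as the paper: the paper constructs $g$ by exactly the same recursive rational perturbation (choosing $g(n)\in([0,1]\cap\Q)\setminus\{g(i):i<n\}$ with $|f(n)-g(n)|<1/2^n$) and proves each direction of the biconditional by combining the tail bound $1/2^k<r/2$ with a finite set $K'$ such that $\rho(F\setminus K')\cap\{0,1,\dots,k\}=\emptyset$, using monotonicity to intersect the two tails. The ``flushing'' fact that you isolate as the delicate point is precisely the step the paper dispatches in a single line (``since $\rho$ satisfies (S), there exists a finite set $K'$ with $\rho(F\setminus K')\cap\{0,1,\dots,k\}=\emptyset$''), i.e.\ the paper treats it as an immediate consequence of property (S) for the arbitrary $F\in\mathcal{F}$ at hand, rather than carrying out the upgrade from (S)-witnesses to all of $\mathcal{F}$ that you correctly note requires care.
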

\begin{proof}
Without loss of generality, we assume that $\Omega=\Lambda=\omega$.

For each $n\in\omega$ choose $g(n) \in [0,1]\cap\mathbb{Q} \setminus \{ g(i) : i<n \}$ such that $|f(n)-g(n)|<1/2^n$. We will show that $g$ is as needed. Fix $F\in\F.$

Assume that $f\restriction\rho(F)$ is $\rho$-convergent to $p$. Fix an open neighborhood $(p-r,p+r)$ of $p$. Since $f\restriction\rho(F)$ is $\rho$-convergent to $p$, there exists a finite set $K$ such that $f[\rho(F\setminus K)]\subseteq (p-r/2,p+r/2).$ Let $k$ be chosen in a way that $1/2^k<r/2$. 
Since $\rho$ satisfies (S),
there exists a finite set $K'$  such that $\rho(F\setminus K')\cap\{0,1,\dots,k\}=\emptyset$. Then, $g[\rho(F\setminus (K\cup K'))]\subseteq (p-r,p+r).$ Thus $g\restriction\rho(F)$ is $\rho$-convergent to $p$.

Similarly, one can show that if $g\restriction\rho(F)$ is $\rho$-convergent to $p$ then  $f\restriction\rho(F)$ is also $\rho$-convergent to $p$.
\end{proof}

\begin{theorem}
\label{thm:ajsgdfjagsdkf}
    Let $\rho\colon\F\to [\Lambda]^\omega$ be partition regular with $\F\subseteq[\Omega]^\omega$. If $\rho\in P^-$ and  has small accretions, then the following conditions are equivalent.
\begin{enumerate}

        \item  $\FinBW(\rho)$  coincides with the class of all compact metric spaces in the realm of metric spaces.

    \item 
$[0,1]\in\FinBW(\rho)$.

\item $\rho_\conv \nleq_K \rho$.
\end{enumerate}
 The assumption  ``$\rho\in P^-$''  is only used to prove the implication ``$(3)\implies(2)$''.
\end{theorem}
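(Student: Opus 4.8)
The plan is to run the cycle through $(1)\Rightarrow(2)\Rightarrow(3)$, $(3)\Rightarrow(2)$, and $(2)\Rightarrow(1)$, so that $(1)\Leftrightarrow(2)\Leftrightarrow(3)$. The implication $(1)\Rightarrow(2)$ is immediate, since $[0,1]$ is a compact metric space. For $(2)\Rightarrow(3)$ I would argue by contraposition: suppose $\rho_\conv\leq_K\rho$ and let $f\colon\Lambda\to\Q\cap[0,1]$ be a witnessing function, regarded as a sequence in $[0,1]$. If some $f\restriction\rho(F)$ were $\rho$-convergent to a point $p$, then the set $B\in\conv^+$ supplied by the Kat\v{e}tov reduction would satisfy, for every finite $K_1$, an inclusion $B\setminus K_2\subseteq f[\rho(F\setminus K_1)]$; choosing $K_1$ so that $f[\rho(F\setminus K_1)]\subseteq(p-1/m,p+1/m)$ shows that all but finitely many elements of $B$ lie within $1/m$ of $p$, for every $m$. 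Hence $B$ is the range of a sequence converging to $p$, so $B\in\conv$, contradicting $B\in\conv^+$. Thus $f$ witnesses $[0,1]\notin\FinBW(\rho)$, i.e.\ (2) fails.

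For $(2)\Rightarrow(1)$ I would check both inclusions within the metric realm. If a metric space $X$ lies in $\FinBW(\rho)$ then it is sequentially compact: any $\rho$-convergent restriction $f\restriction\rho(F)$ yields, via property (S) and a shrinking sequence of balls, an honest convergent subsequence, which for metric spaces gives compactness. Conversely, every compact metric space is a closed subspace of the Hilbert cube $[0,1]^\omega$, so it suffices to know that $[0,1]^\omega\in\FinBW(\rho)$ and that $\FinBW(\rho)$ is closed under closed subspaces. These facts — in particular the promotion of $[0,1]\in\FinBW(\rho)$ to the whole Hilbert cube, where the small-accretions hypothesis enters to diagonalize across the coordinates — I would import from the corresponding results in \cite{FKK-Unified}.

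The substantial implication is $(3)\Rightarrow(2)$, which I would prove contrapositively. Assuming $[0,1]\notin\FinBW(\rho)$, Lemma~\ref{6.1} lets me choose the witnessing sequence to be an injection $f\colon\Lambda\to\Q\cap[0,1]$, and I claim the same $f$ witnesses $\rho_\conv\leq_K\rho$. Fix $F\in\F$; replacing $F$ by a subset given by (S) (which only shrinks the tails $\rho(F\setminus K_1)$, hence suffices) I may assume each $a\in\rho(F)$ eventually leaves the tails. The governing object is the persistent cluster set $\mathrm{Cl}(F)=\bigcap_{K}\overline{f[\rho(F\setminus K)]}$, a nonempty compact subset of $[0,1]$, and the decisive dichotomy is whether it is finite or infinite. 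If $\mathrm{Cl}(F)$ were finite, a nested-compactness argument produces a finite $K_0$ with $f[\rho(F\setminus K_0)]$ inside a disjoint union of small balls about the cluster points, and property (R) then yields $G$ with $f[\rho(G)]$ contained in a single small ball $U\ni p$. I expect the extraction of a $\rho$-convergent subfunction from here to be exactly where $P^-$ is used: partition $\rho(G)$ into the annuli $A_n=\{a\in\rho(G):\mathrm{dist}(f(a),p)\in[1/(n+1),1/n)\}$, each $\I_\rho$-small because its $f$-image is bounded away from the unique cluster point $p$; applying $P^-$ gives $\tilde G$ that eventually avoids every $A_n$, and since $\{a:\mathrm{dist}(f(a),p)\geq 1/N\}=\bigcup_{n<N}A_n$ is a finite union, $f\restriction\rho(\tilde G)$ $\rho$-converges to $p$, contradicting $[0,1]\notin\FinBW(\rho)$.

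Consequently $\mathrm{Cl}(F)$ must be infinite, and I would finish by a diagonal construction of the required $\conv^+$ set. Fixing distinct $q_0,q_1,\dots\in\mathrm{Cl}(F)$ and a function $g\colon\omega\to\omega$ with every fibre infinite, and enumerating $\Omega$ so that $K_0\subseteq K_1\subseteq\cdots$, I pick distinct $a_n\in\rho(F\setminus K_n)$ with $f(a_n)$ within $1/(n+1)$ of $q_{g(n)}$; this is possible because each $q_m\in\mathrm{Cl}(F)$ forces $f^{-1}[(q_m-\delta,q_m+\delta)]\cap\rho(F\setminus K)$ to be infinite for every $\delta>0$ and every finite $K$. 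Then $B=\{f(a_n):n\in\omega\}$ accumulates at every $q_m$, hence has infinitely many accumulation points and lies in $\conv^+$, while the growing depth of $\rho(F\setminus K_n)$ forces, for each finite $K_1$, the tail condition $B\setminus K_2\subseteq f[\rho(F\setminus K_1)]$ with $K_2$ finite. The one genuinely hard step is the finite-cluster extraction via $P^-$; once $\mathrm{Cl}(F)$ is known to be infinite, the construction of $B$ is routine bookkeeping.
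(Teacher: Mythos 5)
Your easy implications are fine, and your infinite-cluster construction of the $\conv$-positive set $B$ (picking $a_n$ in ever deeper tails near infinitely many persistent cluster points) is essentially sound and close in spirit to the paper's endgame. The genuine gap is in the finite-cluster case of $(3)\implies(2)$, at the step ``each annulus $A_n=\{a\in\rho(G):\mathrm{dist}(f(a),p)\in[1/(n+1),1/n)\}$ is $\I_\rho$-small because its $f$-image is bounded away from the unique cluster point $p$.'' This inference is valid for $\rho=\rho_\I$ (where a positive subset of a positive set is again a set of the same kind, so its accumulation points are automatically persistent), but it fails in the general partition-regular setting: if $A_n\notin\I_\rho$, you only get $H\in\F$ with $\rho(H)\subseteq A_n\subseteq\rho(F\setminus K_0)$, i.e.\ $\rho(H)$ sits inside one fixed tail of $F$, and nothing places the tails $\rho(H\setminus L)$ inside deeper tails $\rho(F\setminus K)$. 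Hence the accumulation points of $f[\rho(H)]$ need not belong to $\mathrm{Cl}(F)=\bigcap_K\overline{f[\rho(F\setminus K)]}$, and finiteness of $\mathrm{Cl}(F)$ yields no contradiction; ``$f\restriction\rho(H)$ is not $\rho$-convergent'' is perfectly consistent with its image lying in an annulus. What is missing is precisely tail-persistence of positive sets, which is where the small-accretions hypothesis must enter --- and your proof never uses small accretions at all, a telltale sign. The patch is available from the paper's own toolkit: $\rho\in P^-$ together with small accretions gives $\rho\in(S_2)$, so after shrinking $F$ appropriately, every positive $B\subseteq\rho(F)$ contains some $\rho(H)$ whose tails do enter the tails of $F$; then $\mathrm{Cl}(H)\subseteq\mathrm{Cl}(F)$, $\mathrm{Cl}(H)\neq\emptyset$ lies in a closed annulus missing all points of $\mathrm{Cl}(F)$, and your contradiction goes through. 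Note that the paper's proof avoids your dichotomy on $\mathrm{Cl}(F)$ altogether: it never claims annuli are small, but instead runs a recursion producing positive sets $Z^n$ and distinct limits $y^n$, using $P^-$ only in contrapositive form (if every co-neighborhood set of $y^n$ inside $Z^{n-1}$ were small, $P^-$ would manufacture a $\rho$-convergent restriction) and using small accretions plus $P^-$ exactly to obtain $G$ with $\rho(G)\subseteq Z^{n-1}$ whose tails track the tails of $F_1$.

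A secondary soft spot is $(2)\implies(1)$: you route through $[0,1]^\omega\in\FinBW(\rho)$ via an unspecified ``promotion to the Hilbert cube'' with a coordinate diagonalization, which is another unproved claim (and would again drag in $P^-$-type hypotheses, contrary to the theorem's assertion that $P^-$ is needed only for $(3)\implies(2)$). The paper's route is both more elementary and hypothesis-free: every compact metric space --- the Hilbert cube included --- is a continuous image of the Cantor set, which is a closed subspace of $[0,1]$, and $\FinBW(\rho)$ is closed under closed subspaces and continuous images. Your direct argument for $(2)\implies(3)$ (showing the set $B$ supplied by a Kat\v{e}tov reduction would converge to the $\rho$-limit $p$, contradicting $B\in\conv^+$) is correct and is a nice self-contained substitute for the paper's citation of the monotonicity theorem from \cite{FKK-Unified}.
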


\begin{proof}
$(1)\iff(2)$
The implication ``$\implies$'' is obvious, and to show the reverse implication
it is enough to notice that every compact  metric space is a continuous image of the Cantor space, and the latter space is homeomorphic to a closed subset of $[0,1]$. 

$(2)\implies(3)$ Let us assume that $\rho_\conv\leq_K\rho$. Then, it follows from \cite[Theorem 11.1(1)]{FKK-Unified} that
$\FinBW(\rho)\subseteq\FinBW(\rho_\conv).$
    From \cite[Proposition 10.2(4)]{FKK-Unified} we also see that
$\FinBW(\conv)=\FinBW(\rho_\conv).$ 
    Since $[0,1]\notin\FinBW(\conv)$ \cite[Section 2.7]{alcantara-phd-thesis}, we obtain $[0,1]\notin\FinBW(\rho)$.

$(3)\implies(2)$
Without loss of generality, we can assume $\Omega=\Lambda=\omega$.  
    Suppose that 
    $[0,1]\notin \FinBW(\rho)$.
    Then, there is $f\colon\omega\to[0,1]$ such that
    $f\restriction \rho(F)$ is not $\rho$-convergent for any $F\in \cF$.
By Lemma \ref{6.1}, without loss of generality, we can assume that $f\colon\omega\to[0,1]\cap\mathbb{Q}$. We are going to show that $f$ is a witness for $\rho_\conv\leq_K\rho$. Let $E_1\in\F$. Since $\rho$ has small accretions, there is $F_1\subseteq E_1$, $F_1\in\F$ such that: 
$\rho(F_1)\setminus\rho(F_1\setminus K) \in \I_\rho$ for every finite $K$. 

Define $Z^{-1}=\rho(F_1)$ and observe that $\rho(F_1\setminus K) \subseteq Z^{-1} \cap \rho(E_1\setminus K)$ for every finite $K$ (as $F_1\subseteq E_1$). We begin with a construction of $x_m^n\in\omega, A^n\subseteq\omega, y^n\in[0,1]$, open sets $U^n,V^n\subseteq[0,1]$ and $Z^n\subseteq\omega$ for all $n,m\in\omega$ that will satisfy the following conditions:
\begin{enumerate}
    \item $y^n\in V^n\subseteq U^n$,
    \item $Z^n\subseteq Z^{n-1}$ and $ Z^n\notin \I_\rho$,
    \item $f[Z^n]\cap V^n =\emptyset$,
    \item $(f(x_m^n))_{m\in A^n}$ converges to $y^n$,
     \item $x_m^n\in\rho(F_1\setminus[0,m])\cap Z^{n-1})$; in particular, $\rho(F_1\setminus[0,m])\cap Z^{n}\neq  \emptyset$ for all $n\in\omega$,
     \item $y^i\neq y^j$ for $i\neq j.$
\end{enumerate}
In the $n$-th step, for every $m\in\omega$ we choose $x_m^n\in\rho(F_1\setminus[0,m])\cap Z^{n-1}$ (this is possible by (5) or, in the case of $n=0$, by $\rho(F_1\setminus [0,m])\subseteq Z^{-1}$). The sequence $f(x_m^n)_{m\in\omega}$ has a subsequence $f(x_m^n)_{m\in A^n}$ convergent to some $y^n$. Note that by (2) for every $m$ we have 
\begin{equation*}
Z^{n-1} \setminus \rho(F_1 \setminus [0,m]) \subseteq \rho(F_1) \setminus \rho(F_1 \setminus [0,m]) \in \I_\rho.
\end{equation*}
Since $\rho$ is $P^-$,
there is $G\in\F$ such that 
$\rho(G)\subseteq Z^{n-1}$ 
and for every $m$ there exists $k$ such that $\rho(G \setminus [0,k]) \subseteq \rho(F_1 \setminus [0,m]).$
However, $f[\rho(G)]$ is not $\rho$-convergent, so it is  not $\rho$-convergent to $y^n$. Hence, there exists an open interval $U^n$ containing $y^n$ such that for all $k$ we have $f[\rho(G \setminus [0,k])] \not\subseteq U^n.$ Recall that for every $m$ there exists $k$ such that $\rho(G \setminus [0,k])\subseteq \rho(F_1 \setminus [0,m])\cap Z^{n-1}$. So also for all $m$ we have $f[\rho(F_1 \setminus [0,m])\cap Z^{n-1}] \not\subseteq U^n.$
Since $\rho\in P^-$, we know that there exists $n_0$ such that the set
\begin{equation*}
\{i\in Z^{n-1}: f(i)\notin(y^n-1/n_0,y^n+1/n_0)\}\in \I_\rho^+
\end{equation*}
(if not, $\rho\in P^-$ would give us a $\rho$-sequence $\rho$-converging to $y^n$). Put $V^n=U^n\cap(y^n-1/n_0,y^n+1/n_0)$ and $Z^n = \{i\in Z^{n-1}:f(i)\notin V^n\}.$

Now, we will verify that this construction meets all required conditions. Items (1), (3), (4) and condition $Z^n\subseteq Z^{n-1}$ from (2) are clear. To show that $Z^n\notin\I_\rho$, observe that $Z^n\supseteq \{i\in Z^{n-1}: f(i)\notin(y^n-1/n_0,y^n+1/n_0)\}\in \I_\rho^+$. Thus, (2) is satisfied. Now we consider the second part of item (5) (as the first part is clear). Since $f[\rho(F_1 \setminus [0,m])\cap Z^{n-1}] \not\subseteq U^n$, there is $x\in \rho(F_1 \setminus [0,m])\cap Z^{n-1}$ such that $f(x)\notin U^n$. Since $V^n\subseteq U^n$ (by (1)), $f(x)\notin V^n$, so $x\in \rho(F_1 \setminus [0,m])\cap Z^n$. Finally, we verify (6). Assume, for the sake of contradiction, that $y^k=y^n$ for some $k<n$. Since $V^k$ is an open neighborhood of $y^k$ (by (1)) and $y^n=y^k$ is the limit of the sequence $(f(x_m^n))_{m\in A^n}$, for almost all $m\in A^n$ we have $f(x_m^n)\in V^k$. On the other hand, by (2) and (5), $x_m^n\in Z^{n-1}\subseteq Z^k$ for all $m\in A^n$, so $f[Z^k]\cap V^k\neq \emptyset$ which contradicts  (3). 

We define a set
\begin{equation*}
X= \{x_m^n:n\in\omega, m\in A^n\setminus[0,n]\}.
\end{equation*}
Let $F_2=f[X]$. Then,  
$F_2\notin \conv$ because $X$ contains elements of infinitely many sequences convergent to different values. Now, let $K_1\in\Fin$ and find $n\in\omega$ such that $K_1\subseteq[0,n].$ We let
\begin{equation*}
K_2=f[\{x_m^k:k\leq n, m\in A^k\setminus[0,k], m\leq n\}]\in \Fin.
\end{equation*}
Then
\begin{equation*}
    \begin{split}
F_2\setminus K_2
=&
f[\{x_m^k:k>n,m\in A^k\setminus[0,k]\}]
\\&
\cup f[\{x_m^k: k\leq n, m> n, m\in A^k\setminus[0,k]\}]\subseteq f[\{x_m^k:k\in\omega, m >n\}]. 
    \end{split}
\end{equation*}
Since $\{x_m^k:k\in\omega, m >n\}\subseteq \rho(F_1\setminus[0,n])$ by (5), we conclude that $F_2\setminus K_2\subseteq f[\rho(F_1\setminus[0,n])]\subseteq f[\rho(F_1\setminus K_1)]$.
\end{proof}


\begin{definition}\label{def:tau}
Let 
$\mathcal{A}=\{A_\alpha: \alpha< \continuum\}$
be an almost disjoint family and  $X$ be the family of all $x\in\left([0,1]\cap\mathbb Q\right)^{\omega\times\omega}$ such that
\begin{itemize}
    \item for every $p\in[0,1]$ there is its open neighborhood $U$ such that $x[(\omega\setminus[0,n])\times\omega]\nsubseteq U$ for all $n\in\omega$,
    \item $x$ is an injection,
    \item $x[(\omega\setminus[0,n])\times\omega]\notin \conv$ for all $n\in\omega$.
\end{itemize}
Fix an enumeration $X=\{x_\alpha: \alpha< \continuum \}$.

Now we define
$\overline{\mathcal{A}}=\{A\setminus K: A\in\mathcal A,  K \in [\omega]^{<\omega} \}$
and a map $\rho^{[0,1]}\colon\overline{\mathcal{A}}\to [[0,1]\cap\mathbb Q]^\omega$ by
\begin{equation*}
\rho^{[0,1]}(A_{\alpha}\setminus K)=x_\alpha[(\omega\setminus[0,\max(A_\alpha\cap K)])\times\omega].
\end{equation*}

\end{definition} 

\begin{proposition}
$\rho^{[0,1]}$ is a partition regular function with $\conv=\I_{\rho^{[0,1]}}$.
\end{proposition}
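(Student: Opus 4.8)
The plan is to mirror the proof of Proposition~\ref{prop:brand-new-rho}(\ref{prop:brand-new-rho:item-MRS}), with $x_\alpha[\omega\times\omega]$ playing the role of $B_\alpha$ and the slices $x_\alpha[\{n\}\times\omega]$ playing the role of the sets $P_n$ from Definition~\ref{def:rho-brand-new}. First I would check the cheap preliminaries: $\overline{\mathcal A}$ is a nonempty family of infinite subsets of $\omega$ closed under deleting finite sets, and every value $\rho^{[0,1]}(A_\alpha\setminus K)$ is an infinite subset of $[0,1]\cap\mathbb{Q}$ (it lies outside $\conv$ by the third defining clause of $X$, hence is infinite). Conditions (M) and (S) of Definition~\ref{def:partition-regular} then run exactly as in that proposition. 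For (M), almost disjointness of $\mathcal A$ forces two comparable members of $\overline{\mathcal A}$ to carry the same index $\alpha$, and $A_\alpha\setminus K_1\subseteq A_\alpha\setminus K_2$ gives $\max(A_\alpha\cap K_1)\ge\max(A_\alpha\cap K_2)$, whence the images are nested. For (S), given $a=x_\alpha(i,j)\in\rho^{[0,1]}(A_\alpha\setminus K)$, deleting a single point $n_1\in A_\alpha$ with $n_1\ge i$ pushes the cut $\max(A_\alpha\cap\cdot)$ up to at least $i$, so by injectivity of $x_\alpha$ the element $a$ is expelled.

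The real content is condition (R) together with the equality $\conv=\I_{\rho^{[0,1]}}$, and I would route all three through one statement: for every $A\subseteq[0,1]\cap\mathbb{Q}$ with $A\notin\conv$ there exists $y\in X$ with $y[\omega\times\omega]\subseteq A$ (the \emph{Key Claim}). Granting it, the rest is immediate. The inclusion $\conv\subseteq\I_{\rho^{[0,1]}}$ holds because a value $\rho^{[0,1]}(A_\alpha\setminus K)\subseteq A\in\conv$ would place a tail of $x_\alpha$ inside $\conv$, contradicting the third clause of $X$. For $\I_{\rho^{[0,1]}}\subseteq\conv$, any $A\notin\conv$ yields via the Key Claim some $y=x_\beta\in X$ with $x_\beta[\omega\times\omega]\subseteq A$, so $\rho^{[0,1]}(A_\beta)=x_\beta[(\omega\setminus\{0\})\times\omega]\subseteq A$ and $A\notin\I_{\rho^{[0,1]}}$. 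For (R), writing $\rho^{[0,1]}(F)=C\cup D$ and using that $\rho^{[0,1]}(F)\notin\conv$ while $\conv$ is an ideal, one of $C,D$ is not in $\conv$; applying the Key Claim to that piece produces $x_\beta\in X$ whose image, hence $\rho^{[0,1]}(A_\beta)$, sits inside it.

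The Key Claim is where I expect the main difficulty. I would first use the standard fact that $A\notin\conv$ is equivalent to the derived set $A'$ (accumulation points of $A$ in $[0,1]$) being infinite: if $A'$ were finite, compactness of $[0,1]$ would let one cover $A$ by finitely many sequences converging to the points of $A'$. The naive attempt — enumerate accumulation points $p_i\in A'$, pick points of $A$ converging to each, and arrange them as a double sequence — gives injectivity and puts every tail outside $\conv$, but it can violate the first clause of $X$ precisely when the $p_i$ pile up at a single point (e.g.\ $A'=\{0\}\cup\{1/n:n\ge1\}$, where a tail can be swallowed by a neighbourhood of $0$). The remedy I would use is to thread two fixed distinct accumulation points through every tail.

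Concretely, I would fix distinct $r_0,r_1,r_2,\dots\in A'$ and a colouring $c\colon\omega\to\omega$ in which every colour occurs infinitely often, and let the $i$-th column of $y$ be an injective sequence in $A$ converging to $r_{c(i)}$, with all chosen points pairwise distinct (possible since each neighbourhood of $r_k$ meets $A$ infinitely and only finitely many points are committed at any stage). Then $y$ is injective; every tail accumulates at every $r_k$ and so lies outside $\conv$, giving the third clause; and for any $p\in[0,1]$ at least one of $r_0,r_1$, say $r_s$, differs from $p$, so every tail meets an arbitrarily small neighbourhood of $r_s$ (through late columns of colour $s$) and therefore cannot be contained in a small neighbourhood of $p$, giving the first clause at $p$. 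This shows $y\in X$ and completes the Key Claim, and with it the proposition.
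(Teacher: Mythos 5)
Your proposal is correct and takes essentially the same route as the paper: (M) and (S) are verified identically, and your ``Key Claim'' is exactly the content of the paper's proof of (R), from which the paper likewise extracts both inclusions of $\conv=\I_{\rho^{[0,1]}}$. The only difference is cosmetic: where you thread the limit points $r_k$ through every tail via a colouring $c$ hitting each colour infinitely often, the paper achieves the same effect by the index transpose $x(i,n)=q_{n,i}$, so that every tail retains a tail of each sequence $(q_{n,i})_{i\in\omega}$ — two interchangeable implementations of the same idea, and your pointwise construction of the array is sound.
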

\begin{proof}
Since $\overline{\cA}$ is  a nonempty family of infinite subsets of $\omega$ such that $F\setminus K\in \overline{\cA}$ for every $F\in \overline{\cA}$ and a finite set $K$, we only need to show that $\rho^{[0,1]}$ satisfies 
properties (M), (R) and (S) from Definition~\ref{def:partition-regular}.

(M): 
Take  $E=A_\alpha\setminus K_1$ and $F=A_\beta\setminus K_2$ for some $\alpha,\beta$ and finite sets $K_1,K_2$ such that  $E\subseteq F$. Since $\cA$ is an almost disjoint family, $\alpha = \beta$, hence we just
 need to check whether it is true that
$[0,\max(A_\alpha\cap K_2)]\subseteq [0,\max(A_\alpha\cap K_1)]$.
Let $m=\max(A_\alpha\cap K_2)$. 
Then,  $m\in A_\alpha \cap K_2$, 
so $m\notin A_\alpha \setminus K_2=F\supseteq E=A_\alpha\setminus K_1.$ Hence $m\notin A_\alpha\setminus K_1$, and consequently $m\in A_\alpha\cap K_1$.

(S): 
Fix $E\in\overline{\mathcal A}$. Then, there is  $\alpha$ and finite set $L$  such that $E = A_\alpha\setminus L$.
Let  $F = E$.
Fix $a\in\rho^{[0,1]}(F).$ 
We need to find a finite set  $K$ such that
$a\notin x_\alpha[(\omega\setminus[0,\max(A_\alpha\cap (K\cup L))])\times \omega]$.  
Since $x_\alpha$ is an injection, we know there exist unique $i\in\omega\setminus [0,\max(A_\alpha\cap L)]$ and $j\in\omega$ such that $x_\alpha(i,j)=a.$ Then, we find $i_0\in A_\alpha$ with $i_0\geq i$, put $K =\{i_0\}$ and observe that it is as needed.

(R): 
 Fix $\alpha$, a finite set $K$ and $A,B\subseteq\mathbb{Q}\cap[0,1]$ such that $\rho^{[0,1]}(A_\alpha\setminus K) = A\cup B$. Since $\rho^{[0,1]}(A_\alpha\setminus K) = x_\alpha[(\omega\setminus[0,\max(A_\alpha\cap K)])\times\omega]\notin\conv$, we have $A\notin\conv$ or $B\notin\conv$. Without loss of generality, we assume $A\notin\conv$. 
 The set $A$ has infinitely many distinct limit points $\{p_n : n\in\omega\}$. Let $(q_{n,i})_{i\in\omega}$ for $n\in\omega$ be sequences in $A$ such that $\lim_{i\to\infty}q_{n,i}=p_n$ and $q_{n,i}\neq q_{n',i'}$ whenever $(n,i)\neq(n',i')$. Define $x:\omega\times\omega\to [0,1]\cap\Q$ by $x(i,n) = q_{n,i}$ (note the exchange of the order of indices). 
Then, $x\in X$, so  there exists $\beta$ such that $x_\beta =x$, so $\rho^{[0,1]}(A_\beta)=x_\beta[\omega\times\omega]\subseteq A$. 

($\I_{\rho^{[0,1]}}=\conv$)
From the proof of (R), it follows  that $\conv^+\subseteq\I^+_{\rho^{[0,1]}}$, and the reversed inclusion is straightforward.
\end{proof}

\begin{lemma}
\label{lem:tau}
$[0,1]\notin\FinBW(\rho^{[0,1]})$.
\end{lemma}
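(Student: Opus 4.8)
The plan is to produce an explicit witness, namely the inclusion map, and show it can never be made $\rho^{[0,1]}$-convergent along any set in $\overline{\mathcal A}$. Recall that the sequences relevant to $\FinBW(\rho^{[0,1]})$ are indexed by $\Lambda=[0,1]\cap\Q$. I would therefore take $f\colon[0,1]\cap\Q\to[0,1]$ to be the inclusion $f(q)=q$, and argue that for no $F\in\overline{\mathcal A}$ is $f\restriction\rho^{[0,1]}(F)$ $\rho^{[0,1]}$-convergent; this is precisely what it means for $[0,1]$ to lie outside $\FinBW(\rho^{[0,1]})$.

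First I would unwind the sets appearing in the definition of $\rho^{[0,1]}$-convergence. Fix $F\in\overline{\mathcal A}$, say $F=A_\alpha\setminus K$ for some $\alpha<\continuum$ and finite $K$, and fix an arbitrary finite $K'$. Since $F\setminus K'=A_\alpha\setminus(K\cup K')$, the definition of $\rho^{[0,1]}$ together with the fact that $f$ is the inclusion (so it moves no points) gives
\[
f[\rho^{[0,1]}(F\setminus K')]=\rho^{[0,1]}(A_\alpha\setminus(K\cup K'))=x_\alpha[(\omega\setminus[0,n])\times\omega],
\]
where $n=\max(A_\alpha\cap(K\cup K'))$. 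Thus every image of the form $f[\rho^{[0,1]}(F\setminus K')]$ is one of the tails $x_\alpha[(\omega\setminus[0,n])\times\omega]$ of the fixed sequence $x_\alpha\in X$.

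The key step is then to invoke the first defining clause of the family $X$. Since $x_\alpha\in X$, for every candidate limit $p\in[0,1]$ there is an open neighborhood $U_p$ of $p$ with $x_\alpha[(\omega\setminus[0,n])\times\omega]\nsubseteq U_p$ for all $n\in\omega$. Combining this with the computation above, I obtain $f[\rho^{[0,1]}(F\setminus K')]\nsubseteq U_p$ for every finite $K'$, which is exactly the negation of $\rho^{[0,1]}$-convergence of $f\restriction\rho^{[0,1]}(F)$ to $p$. As $p\in[0,1]$ and $F\in\overline{\mathcal A}$ were arbitrary, the inclusion $f$ has no $\rho^{[0,1]}$-convergent restriction, which proves the lemma.

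I do not expect a genuine obstacle here: the real work has been front-loaded into the first clause of the definition of $X$, engineered precisely so that every tail of each $x_\alpha$ escapes some fixed neighborhood of every point of $[0,1]$. The only points demanding care are (i) recording that the inclusion leaves sets unchanged, so that $f[\rho^{[0,1]}(F\setminus K')]$ really is a tail of $x_\alpha$, and (ii) matching the quantifier order---``there exists a neighborhood $U_p$ such that for \emph{all} finite $K'$ the image is not contained in $U_p$''---to the negation of $\rho^{[0,1]}$-convergence. Both are immediate from the definitions, with the convention $\max\emptyset=0$ covering the degenerate case $A_\alpha\cap(K\cup K')=\emptyset$.
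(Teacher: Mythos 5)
Your proof is correct and is essentially identical to the paper's: both take the inclusion $f(q)=q$, observe that each $f[\rho^{[0,1]}(F\setminus K')]$ is a tail $x_\alpha[(\omega\setminus[0,n])\times\omega]$, and apply the first clause in the definition of $X$ to rule out $\rho^{[0,1]}$-convergence to any $p\in[0,1]$. No gaps; your quantifier bookkeeping and the $\max\emptyset=0$ remark match the paper's intent exactly.
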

\begin{proof}
    Define $f\colon [0,1]\cap\mathbb Q\to[0,1]$ by $f(q)=q$. 
Take any $A_\alpha\setminus K\in\overline{\mathcal A}$
and $p\in[0,1].$ 
Since $x_\alpha\in X$, there exists some open set $U\ni p$ such that for all finite sets  $L$,
\begin{equation*}
f[\rho^{[0,1]}(A_\alpha\setminus(K\cup L))] =\rho^{[0,1]}(A_\alpha\setminus(K\cup L))=x_\alpha[(\omega\setminus[0,\max(A_\alpha\cap (K\cup L))])\times\omega] \nsubseteq U.
\end{equation*}
    Hence, $f\restriction  \rho^{[0,1]}(A_\alpha\setminus K)$ is not $\rho^{[0,1]}$-convergent to $p$.
\end{proof}

\begin{theorem}
\label{thm:oyiewrytosr}
    For  a  partition regular function $\rho$, the following conditions are equivalent.
\begin{enumerate}

        \item  $\FinBW(\rho)$  coincides with the class of all compact metric spaces in the realm of metric spaces.

    \item 
$[0,1]\in\FinBW(\rho)$.

\item $\rho^{[0,1]} \nleq_K \rho$.
\end{enumerate}
\end{theorem}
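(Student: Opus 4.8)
The plan is to follow the architecture of the proof of Theorem~\ref{thm:ajsgdfjagsdkf}, but to replace the role of $\conv$ (that is, of $\rho_\conv$) by the richer function $\rho^{[0,1]}$, so that the hypotheses ``$\rho\in P^-$'' and ``small accretions'' are no longer needed. The equivalence $(1)\iff(2)$ is obtained exactly as in Theorem~\ref{thm:ajsgdfjagsdkf}: that argument only uses that $\FinBW(\rho)$ is closed under continuous images and closed subspaces together with the fact that every compact metric space is a continuous image of a closed subset of $[0,1]$, so it transfers verbatim. For $(2)\implies(3)$ I would argue by contraposition: if $\rho^{[0,1]}\leq_K\rho$ then $\FinBW(\rho)\subseteq\FinBW(\rho^{[0,1]})$ by \cite[Theorem~11.1(1)]{FKK-Unified}, and since $[0,1]\notin\FinBW(\rho^{[0,1]})$ by Lemma~\ref{lem:tau}, we conclude $[0,1]\notin\FinBW(\rho)$.

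The whole content lies in $(3)\implies(2)$, which I would also prove by contraposition. Assume $[0,1]\notin\FinBW(\rho)$ and fix a witness $f\colon\Lambda\to[0,1]$; by Lemma~\ref{6.1} I may take $f$ to be an injection into $[0,1]\cap\Q$ such that no restriction $f\restriction\rho(F)$ is $\rho$-convergent. I claim that $f$ itself witnesses $\rho^{[0,1]}\leq_K\rho$. The key preparatory fact is that $f[\rho(G)]\notin\conv$ for every $G\in\cF$: otherwise $f[\rho(G)]$ would have only finitely many accumulation points and thus be covered by finitely many convergent sequences, so by finitely many applications of property (R) some $E'\subseteq G$ would satisfy $f[\rho(E')]\subseteq\ran\sigma$ for a single convergent sequence $\sigma\to c$; applying property (S) to remove the finitely many preimages lying outside a given neighborhood of $c$ would then make $f\restriction\rho(E')$ $\rho$-convergent to $c$, a contradiction. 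Hence every tail $f[\rho(E\setminus K)]$ has infinitely many accumulation points.

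Now fix $F\in\cF$ and use (S) to choose $E\subseteq F$, $E\in\cF$, in which every element of $\rho(E)$ can be deleted by a finite subset of $\Omega$. Enumerate $\Omega=\{o_i:i\in\omega\}$ and set $K_i=\{o_0,\dots,o_{i-1}\}$. Letting $Q_i$ denote the (closed, infinite) set of accumulation points of $f[\rho(E\setminus K_i)]$, the $Q_i$ decrease, so by compactness of $[0,1]$ I fix a persistent accumulation point $p^*\in\bigcap_i Q_i$. Since $f\restriction\rho(E)$ is not $\rho$-convergent to $p^*$, there is a neighborhood $U^*$ of $p^*$ with $f[\rho(E\setminus K)]\not\subseteq U^*$ for every finite $K$; combining this escape property with the removability of $E$ shows that $f[\rho(E\setminus K_i)]\setminus U^*$ is in fact infinite for each $i$. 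By induction on $i$ I then build an injective double sequence $x(i,j)=f(\lambda_{i,j})$ with all $\lambda_{i,j}\in\rho(E\setminus K_i)$ and all chosen values distinct, where row $i$ consists of: a point within $2^{-i}$ of $p^*$ (available since $p^*\in Q_i$), a point of $f[\rho(E\setminus K_i)]\setminus U^*$, and an injective sequence converging to a fresh accumulation point $q_i\in Q_i$; at each step only finitely many previously used values must be avoided, which is possible as all three pools are infinite.

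It remains to verify that $x\in X$ and that the tails match. Injectivity is condition (b) by construction, and condition (c) holds since each tail $\{x(i,j):i>n\}$ has the infinitely many distinct accumulation points $\{q_i:i>n\}$, hence is not in $\conv$. The crux is condition (a): the points near $p^*$ force every tail to escape a small neighborhood of any $p\neq p^*$, while the points of $f[\rho(E\setminus K_i)]\setminus U^*$ force every tail to escape $U^*$, so no tail $\rho^{[0,1]}$-converges to any $p\in[0,1]$ --- this is precisely where the persistent accumulation point and the escape neighborhood do the work that $P^-$ and small accretions did in Theorem~\ref{thm:ajsgdfjagsdkf}, and checking (a) is the step I expect to be the main obstacle. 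Finally $x$ equals some $x_\alpha$, and since $\lambda_{i,j}\in\rho(E\setminus K_i)\subseteq\rho(F\setminus K_i)$ with $\bigcup_i K_i=\Omega$, for any finite $K_1$ one finds $M$ with $K_1\subseteq K_{M+1}$ and then, using that $A_\alpha$ is infinite, picks $m\in A_\alpha$ with $m\geq M$, so that $\rho^{[0,1]}(A_\alpha\setminus\{m\})=\{x_\alpha(i,j):i>m\}\subseteq f[\rho(F\setminus K_1)]$, exhibiting $f$ as the required Kat\v{e}tov witness.
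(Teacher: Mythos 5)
Your proposal is correct, and for $(1)\iff(2)$, $(2)\implies(3)$, the preliminary use of Lemma~\ref{6.1}, the claim that $f[\rho(G)]\notin\conv$ for all $G\in\cF$, and the final tail-matching computation it coincides with the paper's argument. Where you genuinely diverge is in the heart of $(3)\implies(2)$: the paper first builds, for every $n$, a convergent sequence $(x^n_k)_k\subseteq g[\rho(F\setminus[0,n])]$ with pairwise distinct limits $y_n$ and then splits into two cases --- either the resulting double sequence already satisfies the escape condition (a) of Definition~\ref{def:tau} (Case 1, done), or it accumulates into every neighborhood of some single point $p$ (Case 2), in which case non-$\rho$-convergence to $p$ supplies escape points $g(a_n)\notin U$ that are spliced into column $0$, and condition (a) is verified via a disjoint-neighborhoods argument. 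You instead eliminate the dichotomy up front: the sets $Q_i$ of accumulation points of $f[\rho(E\setminus K_i)]$ are nonempty, closed and nested, so compactness of $[0,1]$ yields a persistent accumulation point $p^*\in\bigcap_i Q_i$, and non-$\rho$-convergence to $p^*$ yields the escape neighborhood $U^*$; every row then carries a near-$p^*$ point, a $U^*$-escape point (whose existence in every tail you correctly derive from property (S)-removability on $E$ plus the escape property --- a step the paper never needs in this form), and a fresh-limit sequence, so condition (a) is checked uniformly ($p\neq p^*$ via the near-points, $p=p^*$ via the escape points). Your route buys a single case-free construction at the cost of heavier per-row bookkeeping; the paper's route defers the choice of the problematic limit and keeps each row a pure convergent sequence, which makes cross-row injectivity immediate. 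One small point to make precise in your write-up: if you complete whole (infinite) rows before starting the next, the set of previously used values is infinite, so your parenthetical ``only finitely many previously used values must be avoided'' is literally true only if you interleave all choices into a single global enumeration, picking one value at a time from the relevant infinite pool (points of $f[\rho(E\setminus K_i)]$ within $2^{-j}$ of $q_i$, the near-$p^*$ pool, or $f[\rho(E\setminus K_i)]\setminus U^*$); with that standard reordering the injectivity argument is airtight.
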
 

\begin{proof}
$(1)\iff(2)$ 
Repeat the argument from  the proof of Theorem~\ref{thm:ajsgdfjagsdkf}.

$(2)\implies(3)$ Assume that $\rho^{[0,1]}\leq_K \rho.$ Then, by \cite[Theorem 11.1(1)]{FKK-Unified} we know that $\FinBW(\rho)\subseteq \FinBW(\rho^{[0,1]})$. By Lemma \ref{lem:tau}, we know that $[0,1]\notin\FinBW(\rho^{[0,1]})$,  therefore $[0,1]\notin\FinBW(\rho)$.

$(3)\implies(2)$ Without loss of generality, we assume that $\Omega=\Lambda = \omega$. Assume that $[0,1]\notin \FinBW[\rho]$.
Then, there exists $f:\omega \to [0,1]$ such that 
$f\restriction \rho(F)$ is not $\rho$-convergent
for any  $F\in\mathcal F$. Define $g$ as in Lemma \ref{6.1}. 
We claim that $g$ is a witness for 
$\rho^{[0,1]} \leq_K \rho$. Fix $F\in \F$. 
\begin{claim}
$g[\rho(F\setminus[0,n])]\notin\nolinebreak  \conv$ for every $n\in\omega$.     \end{claim}
\begin{proof}[Proof of claim]
Assume otherwise. Then, $g[\rho(F\setminus[0,n])] = \{z_j^i: j\in\omega,i<m\}$ for finitely many  convergent sequences $(z^i_j)_{j\in \omega}$, $i<m$, with distinct limits. Define 
\begin{equation*}    
\begin{split}
Z_i
&=
g^{-1}[\{z_j^i: j\in\omega\}]\cap \rho(F\setminus[0,n])
\end{split}
\end{equation*}
and observe that $\rho(F\setminus[0,n]) = \bigcup_{i<m} Z_i,$ so there exists $i<m$ and $G\in\F$ such that $\rho(G)\subseteq Z_i$ since $\rho$ is partition regular. But then, $g\restriction\rho(G)$ is convergent, so it also is $\rho$-convergent, which is a contradiction. 
\end{proof}

Using the above claim, for every $n\in\omega$ inductively find sequences $(x_k^n)_{k\in\omega}$ convergent to some $y_n$ such that the following conditions hold for every $i,j,k,n,m\in\omega$:
\begin{itemize}
\item $x_k^n\in  g[\rho(F\setminus[0,n])]$,
    \item $n\neq m \implies y_n\neq y_m$, 
\item $(n,k)\neq(i,j)\implies x_k^n\neq x_j^i$. 
\end{itemize}

Now we consider two cases.

\emph{Case 1.}
For every $p\in [0,1]$ there is a neighborhood $U$ of $p$ such that 
for any $n\in \omega$, 
\begin{equation*}
\{x_k^i: k\in\omega, i > n\} \nsubseteq U.    
\end{equation*}

Define $x:\omega\times\omega\to [0,1]\cap \Q$ by $x(n,k) = x_k^n$. Then, there exists $\alpha$ such that $x_\alpha = x.$ We fix $K_1$ and let $K_2$ be a finite set such that $\max (A_\alpha\cap K_2)\geq\max K_1$. Then,  
\begin{equation*}
    \begin{split}
\rho^{[0,1]}(A_\alpha\setminus K_2)
&=
x[(\omega\setminus [0,\max (A_\alpha\cap K_2)])\times\omega]
\\&
\subseteq x[(\omega\setminus [0,\max (K_1)])\times\omega]\subseteq g[\rho(F\setminus [0,\max K_1])]\subseteq g[\rho(F\setminus K_1)].
    \end{split}
\end{equation*}
    
\emph{Case 2.} 
 There is  $p\in [0,1]$ such that for every neighborhood $U$ of $p$ there is  $n_U\in \omega$ with
\begin{equation*}
\{x_k^i: k\in\omega, i \geq n_U\} \subseteq U.    
\end{equation*}
 
Since  $g\restriction \rho(F)$ is not $\rho$-convergent to $p$,  there exists a neighborhood $U$ of $p$ such that  $g[\rho(F\setminus[0,n])]\nsubseteq U$
for all $n\in\omega$. 
For every $n\geq  n_U$, we pick
$a_n\in\rho(F\setminus[0,n])$ such that $g(a_n)\notin U$.
    Since $\rho$ is partition regular, without loss of generality, we assume that $a_n$ are pairwise distinct. We define $x:\omega\times\omega\to [0,1]\cap\Q$ by
\begin{equation*}
x(n,k) =\begin{cases}g(a_{n+n_U}), & \text{if }k = 0,\\
    x_k^{n+n_U}, & \text{if }k\neq 0.
    \end{cases}
\end{equation*}
\begin{claim}
$x\in X$. 
\end{claim}
\begin{proof}[Proof of claim]
It is clear that 
$x[(\omega\setminus[0,n])\times\omega]\notin\conv$ and that $x$ is injective (as $g$ is injective, $g(a_{n+n_U})\notin U$ for all $n\in\omega$, while $x_k^{n+n_U}\in U$ for all $n,k\in\omega$). Let $q \in [0,1]$ and consider two cases.

\emph{Case I:} 
$q =p$.
Then,  $q\in U$ and for every $n\in\omega$, 
\begin{equation*}
g(a_{n+1+n_U})=x(n+1,0)\in x[(\omega\setminus[0,n])\times\omega]\hspace{10pt}\textnormal{and}\hspace{10pt} g(a_{n+1+n_U})\notin  U,
\end{equation*}
hence $x[(\omega\setminus[0,n])\times\omega]\nsubseteq U.$

\emph{Case II:}
 $q \neq p$. 
 There exist disjoint open neighborhoods   $V,W$ of $p$ and $q$, respectively.
 Then, for all $n\in\omega$:
\begin{equation*}
x_1^{n_V+n+1+n_U}=x(n_V+n+1,1)\in x[(\omega\setminus[0,n])\times\omega]\hspace{10pt}\textnormal{and}\hspace{10pt} x_1^{n_V+n+1+n_U}\notin W.\qedhere
\end{equation*}
\end{proof}
Since $x\in X$, there exists $\alpha$ such that $x_\alpha = x.$ Fix $K_1$ and let $K_2$ be finite such that $\max (A_\alpha\cap K_2)\geq\max K_1$. Then, 
\begin{equation*}
    \begin{split}
\rho^{[0,1]}(A_\alpha\setminus K_2)
&=
x[(\omega\setminus [0,\max (A_\alpha\cap K_2)])\times\omega]
\\&\subseteq 
x[(\omega\setminus [0,\max (K_1)])\times\omega]\subseteq g[\rho(F\setminus [0,\max K_1])]\subseteq g[\rho(F\setminus K_1)].
    \end{split}
\end{equation*}
\end{proof}

\begin{corollary}[{\cite{alcantara-phd-thesis}}]
\label{cor:skjdfhgsdf}
    Let  $\I$ be an ideal on $\omega$. The following conditions are equivalent.
    \begin{enumerate}
        \item  $\FinBW(\I)$  coincides with the class of all compact metric spaces in the realm of metric spaces.
        \item $\conv \not\leq_K \I.$
        \end{enumerate}
\end{corollary}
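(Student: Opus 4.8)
The plan is to derive the corollary from Theorem~\ref{thm:oyiewrytosr} by specializing to the canonical partition regular function $\rho_\I$ attached to the ideal $\I$, exactly as in the proofs of Corollaries~\ref{cor:ksahdflkhsadflksad} and~\ref{cor:faiusgfuagsfdl}. First I would set $\rho=\rho_\I$. By Proposition~\ref{prop:rho-versus-ideal}(\ref{prop:rho-versus-ideal:ideal-gives-rho}) this $\rho$ is partition regular with $\I_{\rho_\I}=\I$, and by \cite[Proposition~10.2(4)]{FKK-Unified} we have $\FinBW(\I)=\FinBW(\rho_\I)$. Applying Theorem~\ref{thm:oyiewrytosr} to $\rho_\I$ then immediately gives that condition~(1) of the corollary is equivalent to $[0,1]\in\FinBW(\I)$, and that both are equivalent to $\rho^{[0,1]}\not\leq_K\rho_\I$. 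So the only thing left to establish is the purely order-theoretic equivalence
\[
\rho^{[0,1]}\leq_K\rho_\I \iff \conv\leq_K\I,
\]
which I would prove by treating its two implications separately.

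For the implication $\rho^{[0,1]}\leq_K\rho_\I\implies\conv\leq_K\I$, I would invoke the general functoriality of the assignment $\rho\mapsto\I_\rho$ with respect to the Kat\v{e}tov order, namely that $\rho_1\leq_K\rho_2$ entails $\I_{\rho_1}\leq_K\I_{\rho_2}$. This is routine: a witness $f$ for $\rho_1\leq_K\rho_2$ sends any $A\in\I_{\rho_2}^+$, which contains some $\rho_2(F_2)$, to a set $f[A]$ containing some $\rho_1(F_1\setminus K_1)$ (take $K_2=\emptyset$ in the definition), so $f[A]\in\I_{\rho_1}^+$. Applying this with $\rho_1=\rho^{[0,1]}$ and $\rho_2=\rho_\I$ and recalling $\I_{\rho^{[0,1]}}=\conv$ and $\I_{\rho_\I}=\I$ yields $\conv\leq_K\I$ at once.

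For the reverse implication $\conv\leq_K\I\implies\rho^{[0,1]}\leq_K\rho_\I$, I cannot compare $\rho^{[0,1]}$ and $\rho_\conv$ directly (we only know that they induce the same ideal $\conv$), so I would instead pass through the $\FinBW$ classes. From $\conv\leq_K\I$ and \cite[Proposition~7.5(2b)]{FKK-Unified} I obtain $\rho_\conv\leq_K\rho_\I$; then \cite[Theorem~11.1(1)]{FKK-Unified} gives $\FinBW(\I)=\FinBW(\rho_\I)\subseteq\FinBW(\rho_\conv)=\FinBW(\conv)$, and since $[0,1]\notin\FinBW(\conv)$ \cite[Section~2.7]{alcantara-phd-thesis} we conclude $[0,1]\notin\FinBW(\I)$. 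By the equivalence $[0,1]\notin\FinBW(\I)\iff\rho^{[0,1]}\leq_K\rho_\I$ recorded above, this is exactly $\rho^{[0,1]}\leq_K\rho_\I$. Combining the two implications closes the order-theoretic equivalence, and hence the corollary.

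I expect the main (if modest) obstacle to be precisely this asymmetry: the step $\rho^{[0,1]}\leq_K\rho_\I\Rightarrow\conv\leq_K\I$ is formal, but its converse genuinely requires the detour through $\FinBW$ together with the fact $[0,1]\notin\FinBW(\conv)$, because $\rho^{[0,1]}$ is \emph{not} assumed Kat\v{e}tov-equivalent to $\rho_\conv$—the two functions merely share the induced ideal $\conv$. Note that no $P^-$ or small-accretions assumption on $\I$ enters, since both directions are routed through Theorem~\ref{thm:oyiewrytosr}, which carries no such hypothesis; in particular Theorem~\ref{thm:ajsgdfjagsdkf} is not needed here.
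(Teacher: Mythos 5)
Your proposal is correct and takes essentially the same route as the paper, whose entire proof reads ``take $\rho=\rho_\I$ and apply Theorem~\ref{thm:oyiewrytosr} and \cite[Propositions~7.5(2b) and 10.2(4)]{FKK-Unified}''; you have simply unpacked the translation steps hidden in those citations. In particular, your inline verification that $\rho_1\leq_K\rho_2$ implies $\I_{\rho_1}\leq_K\I_{\rho_2}$ (yielding $\rho^{[0,1]}\leq_K\rho_\I\implies\conv\leq_K\I$ via $\I_{\rho^{[0,1]}}=\conv$), and the detour through the $\FinBW$ classes and $[0,1]\notin\FinBW(\conv)$ for the converse, are exactly the routine content the paper compresses into its one-line proof.
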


\begin{proof}
Take $\rho=\rho_\I$ and apply  Theorem~\ref{thm:oyiewrytosr}
and \cite[Propositions~7.5(2b) and 10.2(4)]{FKK-Unified}.
\end{proof}

\begin{theorem}
$\rho^{[0,1]}\leq_K\rho_{\conv}$
and 
$\rho_{\conv}\nleq_K\rho^{[0,1]}$.
\end{theorem}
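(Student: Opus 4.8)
The plan is to prove the two inequalities separately, relying throughout on the elementary fact that a set $B\subseteq\Q\cap[0,1]$ lies in $\conv$ if and only if its set of accumulation points in $[0,1]$ is finite; equivalently, $B\in\conv^+$ iff $B$ has infinitely many accumulation points (this is exactly the property of $\conv$-positive sets already used in the proof that $\conv=\I_{\rho^{[0,1]}}$). Write $\mathrm{acc}(\cdot)$ for the set of accumulation points in $[0,1]$. For any $A_\alpha$ and finite $K_1\subseteq\omega$ one has $\rho^{[0,1]}(A_\alpha\setminus K_1)=x_\alpha[(\omega\setminus[0,m])\times\omega]$ with $m=\max(A_\alpha\cap K_1)$; setting $T_m^\alpha=x_\alpha[(\omega\setminus[0,m])\times\omega]$ and letting $m$ range over the cofinal set $A_\alpha$, the inner part of the Kat\v{e}tov definition for a candidate witness $f$ of $\rho_\conv\leq_K\rho^{[0,1]}$ reduces to: there is $F_2\in\conv^+$ with $F_2\subseteq^* f[T_m^\alpha]$ for every $m\in\omega$.

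For $\rho^{[0,1]}\leq_K\rho_\conv$ I would take $f$ to be the identity on $\Q\cap[0,1]$ and, given $F_1\in\conv^+$, reuse the construction from the proof of property (R) for $\rho^{[0,1]}$: since $F_1$ has infinitely many accumulation points $\{p_n\}$, pick injective sequences $q_{n,i}\to p_n$ inside $F_1$ and set $x(i,n)=q_{n,i}$, so that $x\in X$ and $\ran(x)\subseteq F_1$. Choosing $\alpha$ with $x_\alpha=x$ and $F_2=A_\alpha$, for any finite $K_1\subseteq\Q\cap[0,1]$ injectivity of $x$ bounds the first coordinates of the finitely many pairs mapped into $K_1$, so picking $a\in A_\alpha$ above that bound and $K_2=\{a\}$ yields $\rho^{[0,1]}(A_\alpha\setminus K_2)\subseteq\ran(x)\setminus K_1\subseteq F_1\setminus K_1=f[\rho_\conv(F_1\setminus K_1)]$.

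For $\rho_\conv\nleq_K\rho^{[0,1]}$ I would fix an arbitrary $f\colon\Q\cap[0,1]\to\Q\cap[0,1]$ and manufacture a single $x\in X$ defeating it. The guiding observation is that if some $F_2\in\conv^+$ satisfied $F_2\subseteq^* f[T_m^\alpha]$ for all $m$, then $\mathrm{acc}(F_2)\subseteq\bigcap_m\mathrm{acc}(f[T_m^\alpha])$; as $\mathrm{acc}(F_2)$ is infinite, it suffices to build $x\in X$ for which $\bigcap_m\mathrm{acc}(f[T_m])$ is finite. To this end I start from candidate columns $E_k$ of distinct rationals with $E_k\to d_k$, where $\{d_k\}$ accumulates at both $0$ and $1$; by compactness I refine each $E_k$ to an infinite $C_k$ on which $f$ converges, say $f[C_k]\to\delta_k$, and, discarding finitely many terms, arrange $f[C_k]\subseteq(\delta_k-1/k,\delta_k+1/k)$. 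Splitting the indices according to whether $d_k\to0$ or $d_k\to1$ and extracting on each part a subsequence along which $\delta_k$ converges, I obtain an infinite index set $K=K_0\cup K_1$ such that $\{d_k:k\in K\}$ still accumulates at both $0$ and $1$, while $\{\delta_k:k\in K\}$ accumulates at most at two points $\epsilon_0,\epsilon_1$. Relabelling $K$ as $\omega$ and letting $x$ enumerate the columns $\{C_k\}_{k\in K}$, the two-sided accumulation of $\{d_k\}$ gives non-convergence together with infinitely many accumulation points in every domain tail, so $x\in X$. Meanwhile $f[T_m]\subseteq\bigcup_{k>m}(\delta_k-1/k,\delta_k+1/k)$ forces each point of $\bigcap_m\mathrm{acc}(f[T_m])$ to be an accumulation point of $\{\delta_k:k\in K\}$ or a value attained infinitely often (which, being a limit of a convergent subsequence, must equal $\epsilon_0$ or $\epsilon_1$), hence $\bigcap_m\mathrm{acc}(f[T_m])\subseteq\{\epsilon_0,\epsilon_1\}$ is finite. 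Taking $\alpha$ with $x_\alpha=x$ then shows no admissible $F_2$ exists, so $f$ is not a witness; as $f$ was arbitrary, $\rho_\conv\nleq_K\rho^{[0,1]}$.

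The routine parts are the verifications that the constructed $x$ lies in $X$ and that the identity witnesses the first reduction. The main obstacle is the second reduction: producing, for a completely arbitrary $f$, a member of $X$ whose $f$-image retains only finitely many accumulation points in all tails. The tension is that membership in $X$ demands a domain-rich, non-convergent double sequence (infinitely many accumulation points in every tail, clustering at more than one point), whereas defeating $f$ demands that the image collapse its \emph{persistent} accumulation structure to a finite set. The twofold use of sequential compactness—first to make $f$ converge column by column, then to make the column-limits $\delta_k$ cluster at finitely many points—reconciles these opposing requirements, and getting the bookkeeping of the two nested extractions right (so that the domain keeps two accumulation points while the image loses all but finitely many) is the delicate step.
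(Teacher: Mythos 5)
Your proposal is correct, and it is worth separating the two halves, because they relate differently to the paper's argument. For $\rho_{\conv}\nleq_K\rho^{[0,1]}$ you follow essentially the same route as the paper: given an arbitrary $f$, both arguments build a single $x\in X$ whose rows are convergent sequences of rationals whose limits accumulate at both $0$ and $1$, apply sequential compactness twice --- once per row to make $f$ converge (your $1/k$-control on $f[C_k]$ is exactly the paper's $H^S_n=\{h\in Y^S_n:|f(h)-y^S_n|\le 1/n\}$), and once more to make the row-limits cluster at two points (your $\epsilon_0,\epsilon_1$ are the paper's $u^L,u^R$) --- and then show that no $\conv$-positive $B$ can be almost contained in every tail image. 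The only real difference is the finish: the paper splits $B=B^L\cup B^R$ and verifies by an explicit $\varepsilon$-estimate that each piece converges, while you pass through $\operatorname{acc}(B)\subseteq\bigcap_m\operatorname{acc}(f[T_m])\subseteq\{\epsilon_0,\epsilon_1\}$ together with the characterization that $B\in\conv$ iff $B$ has only finitely many accumulation points in $[0,1]$; that characterization is true (one direction is immediate, the other uses compactness of $[0,1]$ to split a set with finitely many accumulation points into finitely many convergent pieces plus a finite set) and it does make the bookkeeping cleaner, but it should be stated and proved explicitly, since the paper only uses its two directions implicitly. For $\rho^{[0,1]}\leq_K\rho_{\conv}$ your route is genuinely different: the paper gets this half for free from its machinery --- if $\rho^{[0,1]}\nleq_K\rho_{\conv}$ held, Theorem~\ref{thm:oyiewrytosr} would give $[0,1]\in\FinBW(\rho_{\conv})=\FinBW(\conv)$, contradicting Corollary~\ref{cor:skjdfhgsdf} --- whereas you exhibit an explicit witness, the identity map, by recycling the construction from the proof of property (R) for $\rho^{[0,1]}$: given $F_1\in\conv^+$, choose pairwise distinct $q_{n,i}\in F_1$ with $q_{n,i}\to p_n$ for infinitely many distinct accumulation points $p_n$, set $x(i,n)=q_{n,i}$ so that every tail of $x$ retains all the limits $p_n$, and take $F_2=A_\alpha$ with $x_\alpha=x$. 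Your version is self-contained and makes the reduction concrete; the paper's is a two-line consequence of results it has already proved. The points you flag as routine do all work as you indicate, but deserve a line each in a write-up: global injectivity of the chosen sequences and pairwise distinctness of the anchor limits $d_k$ (so that every tail of $x$ keeps infinitely many accumulation points and hence lies outside $\conv$, and so that the two-sided accumulation survives in every tail --- automatic here, since the relabelled images of $K_0$ and $K_1$ are both infinite and therefore meet every tail), and the reduction of the Kat\v{e}tov condition to ``$B\subseteq^* f[T_m]$ for all $m\in\omega$'' via monotonicity of the tails and cofinality of $A_\alpha$ in $\omega$.
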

\begin{proof}
Assume for the sake of contradiction that 
$\rho^{[0,1]}\not\leq_K\rho_{\conv}$.
Then, 
$[0,1]\in \FinBW(\rho_\conv) = \FinBW(\conv)$
by Theorem~\ref{thm:oyiewrytosr}
and \cite[Proposition~10.2]{FKK-Unified}, a contradiction with Corollary~\ref{cor:skjdfhgsdf}.

Now, we prove $\rho_{\conv}\nleq_K\rho^{[0,1]}$.
 Take any $f: \Q\cap [0,1]\to \Q\cap [0,1]$. Define
\begin{equation*}
I_n^L = \left(\frac{1}{2^{n+2}},\frac{1}{2^{n+1}}\right)\hspace{10pt}\text{and}\hspace{10pt}I_n^R= \left( 1-\frac{1}{2^{n+1}},1-\frac{1}{2^{n+2}}\right)
\end{equation*}
    for all $n\in\omega$. Now, for all $n\in\omega$, we
    \begin{enumerate}
        \item choose a sequence $Z_n^L\subseteq  I_n^L\cap\Q$ convergent to some $z_n^L\in I_n^L$, 
        \item take an infinite $Y_n^L\subseteq Z_n^L$ such that $f[Y_n^L]$ is convergent to some $y_n^L$,
        \item define $H_n^L=\{h\in Y_n^L:|f(h)-y_n^L|\leq1/n\},$
        \item take a subsequence $(y_n^L)_{n\in G^L}$ convergent to  $u^L$, where $G^L$ is infinite.
    \end{enumerate}
    Similarly, we follow the construction of $Z_n^R,z_n^R,Y_n^R,y_n^R,H_n^R,G^R$ and $u^R$.
    
For $S\in \{L,R\}$, we enumerate
\begin{equation*}
G^S = \{g_n^S:n\in\omega\}
\hspace{10pt} \text{and} \hspace{10pt}
H_n^S 
=\{h_i^{n,S}:i\in\omega\}
.
\end{equation*}

    Define $x:\omega\times\omega\to[0,1]\cap\Q$ by 
\begin{equation*}
x(2n,i)=h_i^{g_n^L,L}\hspace{10pt}\mathrm{and}\hspace{10pt}x(2n+1,i)=h_i^{g_n^R,R}.
\end{equation*}

    Notice the following properties of the sequence $x$. 
    \begin{enumerate}
        \item $x$ is an injection since $H_n^L\subseteq Y_n^L\subseteq Z_n^L\subseteq I_n^L$ (similarly when replacing $L$ with $R$ ) and the intervals $I_n^L$ and $I_n^R$ are pairwise disjoint.
        \item $x[(\omega\setminus[0,n])\times\omega]\notin\conv$ since for all $k>n$ we can find a sequence in $x[(\omega\setminus[0,n])\times\omega]$ convergent to $z_{g_{k}^L}^L$. 
        \item If $p = 0$ then $U=(-1/2,1/2)$ is a neighborhood of $p$ and for all $n\in\omega$ we have $x[(\omega\setminus[0,n])\times\omega]\nsubseteq U$ since $h_0^{g_n^R,R}\in x[(\omega\setminus[0,n])\times\omega]$, however $h_0^{g_n^R,R}\notin U$. 
        
        \item If  $p\in(0,1]$ then there exists an open neighborhood $U$ of $p$ such that
there exists $n\in\omega$ with $I^L_{g_k^L}\cap U = \emptyset$ for all $k>n$.
So $x[(\omega\setminus[0,n])\times\omega]\nsubseteq U$ because $h_0^{g_k^L,L}\in x[(\omega\setminus[0,n])\times\omega]$, however $h_0^{g_k^L,L}\notin U.$
    \end{enumerate}
    
    Therefore $x\in X$, so there exists $\alpha$ such that $x=x_\alpha.$

Take any $B\subseteq \Q\cap [0,1]$
such that 
for every finite set $F$ there exists a finite set  $K$
such that 
$B\setminus K=\rho_{\conv}(B\setminus K) \subseteq f[\rho^{[0,1]}(A_\alpha\setminus F)]$. Once we show that $B\in \conv$, the proof will be finished. 

There exists a finite set $K$ such that
\begin{equation*}
B\setminus K \subseteq f[\rho^{[0,1]}(A_\alpha\setminus \emptyset)]= f\left[\bigcup_{n\in\omega}H_{g_n^L}^L\right]\cup f\left[\bigcup_{n\in\omega}H_{g_n^R}^R\right].
\end{equation*}
Let
\begin{equation*}
B^L = (B\setminus K)\cap f\left[\bigcup_{n\in\omega}H_{g_n^L}^L\right]\hspace{10pt}\mathrm{and}\hspace{10pt}B^R=(B\setminus K)\cap f\left[\bigcup_{n\in\omega}H_{g_n^R}^R\right].
\end{equation*}
We are going to show that $B^L$ is convergent to  $u^L$ and similarly one can show that $B^R$ converges to $u^R$, so $B=B^L\cup B^R\in\conv$.

Let $\varepsilon>0$. Then, there exists $k\in\omega$ such that for all $n>k$
\begin{equation*}
|y^L_{g_n^L}-u^L|<\frac{\varepsilon}{2}\hspace{10pt} \mathrm{and}\hspace{10pt} \frac{1}{g_n^L}<\frac{\varepsilon}{2}.
\end{equation*}
Let $F$ be a finite set such that $\max(F\cap A_\alpha)>2k$. Then, there exists $M\in[[0,1]\cap\Q]^{<\omega}$ such that $B\setminus M\subseteq f[\rho^{[0,1]}(A_\alpha\setminus F)]$. We are going to show that  $|b-u^L|<\varepsilon$
for all $b\in B^L\setminus M$.

Fix $b\in B^L\setminus M\subseteq f[\rho^{[0,1]}(A_\alpha\setminus F)]\cap f[\bigcup_{n\in\omega} H_{g_n^L}^L]$. There exist $m,i\in\omega$ such that $b=f(x_\alpha(2m,i))$ and $2m> 2k.$ Since $x_\alpha (2m,i)= h_i^{g_m^L,L} \in H_{g_m^L}^L$, we obtain that
\begin{equation*}
|b-u^L|\leq |b-y_{g_m^L}^L|+|y_{g_m^L}^L-u^L|<\frac{1}{g_m^L}+\frac{\varepsilon}{2}<\frac{\varepsilon}{2}+\frac{\varepsilon}{2}=\varepsilon.\qedhere
\end{equation*}
\end{proof}

\section{Ideal version of Mazurkiewicz theorem}

Mazurkiewicz's theorem~\cite{Mazurkiewicz1932} states that every  uniformly bounded sequence of continuous functions has a uniformly convergent subsequence once restricted to some perfect set.
The question whether the subsequence can be enumerated by $\I$-positive set for a given ideal $\I$, was investigated in \cite{MR2961261} where the authors proved that  the answer is positive for $F_\sigma$-ideals.
The following theorem shows that this result can be extended to every ideal with the $\FinBW$ property which in turn means that the ideal $\conv$ is critical for the ideal version of Mazurkiewicz's theorem.

\begin{theorem}
    Let $\I$ be an ideal on $\omega$. The following conditions are equivalent.
\begin{enumerate}
    \item $\I$ has the $\FinBW$ property.
    \item For every sequence $( f_n )_{n\in\omega}$ of uniformly bounded continuous real-valued functions defined on $\R$ there exists $A\in\I^+$ and a nonempty perfect set $P\subseteq\R$ such that the subsequence $( f_n \restriction P )_{n\in A}$ is uniformly convergent.
\end{enumerate}
\end{theorem}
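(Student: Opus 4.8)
Plan. The implication $(2)\Rightarrow(1)$ is the easy one. Given a sequence $(x_n)_{n\in\omega}$ in $[0,1]$, I would apply (2) to the constant functions $f_n\equiv x_n$, which are uniformly bounded and continuous on $\R$. This yields $A\in\I^+$ and a nonempty perfect set $P\subseteq\R$ such that $(f_n\restriction P)_{n\in A}$ converges uniformly; since each $f_n\restriction P$ is the constant $x_n$, uniform convergence of these constants is exactly the convergence of $(x_n)_{n\in A}$ in $[0,1]$. Hence $[0,1]\in\FinBW(\I)$, i.e.\ $\I$ has the $\FinBW$ property.

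For $(1)\Rightarrow(2)$ the first move is to upgrade the hypothesis. Since $\I$ has the $\FinBW$ property, $\conv\not\leq_K\I$, so by Corollary~\ref{cor:skjdfhgsdf} the class $\FinBW(\I)$ contains \emph{every} compact metric space. This single fact must replace, all at once, the infinitely many subsequence selections of the classical proof. I would then reduce to a combinatorially transparent setting: fix a Cantor set $C\subseteq\R$ with its defining scheme of clopen pieces $(C_s)_{s\in 2^{<\omega}}$, look for $P$ inside $C$, and replace each $f_n\restriction C$ by a locally constant approximation $\tilde f_n$ with $\|f_n-\tilde f_n\|_\infty<2^{-n}$; as $\|f_n-\tilde f_n\|_\infty\to 0$, any $A$ and $P$ working for $(\tilde f_n)$ also work for $(f_n)$. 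Thus the task becomes: for a uniformly bounded sequence of locally constant colourings of the Cantor tree, produce $A\in\I^+$ and a perfect subtree on whose branch set the colourings indexed by $A$ converge uniformly.

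The heart of the argument is to perform the index selection and the perfect-set construction \emph{simultaneously}, through one application of the fact that the relevant compact metric space lies in $\FinBW(\I)$. Concretely, I would encode the behaviour of each $f_n$ into a single point $\xi_n$ of a fixed compact metric space $K$ (a countable power of a bounded interval), recording, relative to the scheme $(C_s)$ and a fixed countable family of targets, the data needed to later pin $f_n$ near a target on the descendants of a node (values at a fixed dense set of sample points of $C$, together with auxiliary profile information on the pieces). Applying $K\in\FinBW(\I)$ to $(\xi_n)_n$ gives $A\in\I^+$ along which $(\xi_n)_{n\in A}$ converges to a coherent limiting profile, and from that profile I would carve out, by a Cantor scheme refining $(C_s)$, a perfect set $P$ on which the tail of $(f_n)_{n\in A}$ is forced towards its limit; uniform convergence would then follow by splitting $|f_m(x)-f_n(x)|$ at a sample point of the piece containing $x$, the clopen approximation absorbing the residual error.

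The main obstacle is exactly this coordination. The classical Mazurkiewicz construction builds the perfect set by an \emph{infinite} fusion that thins the index set at every level, and such a diagonal set is far too sparse to be $\I$-positive for a tall ideal — already for the sequence $\sin(nx)$ the perfect set of uniform convergence must have measure zero, so genuine thinning cannot be avoided. The whole difficulty is therefore to front-load \emph{all} the selection into the one $\FinBW$ application, designing $K$ and the scheme rigidly enough that the single limit profile obtained along $A$ drives the entire construction and, crucially, that the \emph{full} set $A$ — not merely some sub-subsequence of it — converges uniformly on the resulting $P$. The guiding example is $f_n(b)=b(n)$ on $2^\omega$: there, for \emph{any} $A\in\I^+$ the set $P=\{b: b(n)=0\text{ for all }n\in A\}$ is perfect and makes every $f_n\restriction P$ ($n\in A$) vanish, so $P$ must be chosen \emph{adapted to} $A$, essentially orthogonal to it. Reproducing this adaptivity for arbitrary locally constant colourings — guaranteeing that the limit data read off along the $\I$-positive $A$ can be realised on one perfect set — is where the real work lies.
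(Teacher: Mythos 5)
Your $(2)\Rightarrow(1)$ is exactly the paper's argument (constant functions $f_n\equiv x_n$) and is fine. Your first stage of $(1)\Rightarrow(2)$ is also sound as far as it goes: upgrading $[0,1]\in\FinBW(\I)$ to all compact metric spaces and applying membership of a countable power of a bounded interval in $\FinBW(\I)$ to the profile points $\xi_n$ would indeed yield a single $A_0\in\I^+$ along which $(f_n)_{n\in A_0}$ converges pointwise on a countable dense set of sample points. The paper reaches the same state by a different route — it encodes the behaviour of each $f_m$ at the first $n$ rationals as a cell of a finite partition $\mathcal{B}^n$, so that $\bigcup_n\mathcal{B}^n$ is a finite-branching tree, and invokes \cite[Proposition~2.8]{Ramsey} to get a branch $(Z_n)$ and $Z\in\I^+$ with $Z\setminus Z_n$ finite — but these two ways of extracting an $\I$-positive pointwise-convergent index set are interchangeable.

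The genuine gap is everything after that point, and you flag it yourself (``this is where the real work lies''). Pointwise control at sample points says nothing about the supremum of $|f_i - g|$ on a whole piece of a Cantor scheme, because the $f_i$ are not equicontinuous; to control an $f_i$ uniformly on a piece one must impose an open condition $f_i^{-1}[I]$ on that piece, and at each level of the fusion only finitely many such conditions can be imposed. Hence any single fusion controls each index only from some level onward and necessarily leaves later indices uncontrolled on earlier pieces — exactly the thinning you identify as fatal. Your proposed remedy, to design the encoding so that the \emph{full} FinBW-output set $A_0$ converges uniformly on one $P$, is not achievable by this kind of construction and is not what the paper does. The paper's resolution, which is the one idea missing from your plan, is a pigeonhole on \emph{two interleaved fusions}: it chops $Z$ into consecutive finite blocks $A_n$, groups them into alternating streams $G^0_n$ and $G^1_n$, and runs two parallel Cantor-scheme constructions (conditions (P1)--(P10)), where at level $n$ the stream-$\delta$ construction shrinks its intervals $U^\delta_s$ inside $\bigcap_{i\in G^\delta_n} f_i^{-1}[I_{a^\delta(\cdot)}]$ — uniformly controlling precisely the finite block $G^\delta_n$ on closures of the new pieces — while the chosen rationals $q^\delta_s$ and pointwise convergence along $Z$ keep all later blocks compatible at sample points via the threshold indices $k^\delta_n$. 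Since $Z=\bigcup_n G^0_n\cup\bigcup_n G^1_n\in\I^+$, one of the two streams is $\I$-positive; that stream is the final $A$ and its fusion yields $P$, with uniform convergence read off from (P5) and (P7). In short, thinning is not avoided but performed twice on complementary halves of $Z$, so that $\I$-positivity survives the fusion; without this (or an equivalent) device your outline does not close.
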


\begin{proof}
$(2)\implies(1)$  Assume that $\I$ does not have the $\FinBW$ property. Then, there is a sequence $( x_n )_{n\in\omega}$ of real numbers from the interval $[0,1]$ such that for every $A\in\I^+$ the subsequence $( x_n )_{n\in A}$ is not convergent. For each $n\in\omega$, let $f_n\colon\R\to\R$ be the  constant function with the value  $x_n$. Fix any $A\in\I^+$ and a nonempty perfect set $P\subseteq\R$. Assume, for the sake of contradiction, that the subsequence $(f_n \restriction P )_{n\in A}$ is uniformly convergent. Let $x\in P$. Then, $( f_n(x) )_{n\in A}$ should converge, but $f_n(x)=x_n$ for every $n\in A$ and $( x_n )_{n\in A}$ is not convergent, a contradiction.

$(1)\implies(2)$ Fix $M>0$ such that $|f_n(x)|\leq M$ for every $x\in\R$ and $n\in\omega$. Let $\{I_s:s\in2^{<\omega}\}$ be a family of nonempty open subintervals of $(-M-1,M+1)$ satisfying:
\begin{itemize}
    \item[(C1)] $I_\emptyset = (-M-1,M+1)$,
    \item[(C2)]  $\{I_s:s\in2^{n}\}$ is an open cover of $(-M-1,M+1)$ for every $n\in\omega$,
    \item[(C3)] for every $\varepsilon>0$ there exists $n\in\omega$ such that for every $s\in2^n$, $\operatorname{diam}(I_s)<\varepsilon$,
    \item[(C4)]  $I_s=I_{s^\frown0}\cup I_{s^\frown1}$ for every $s\in2^{<\omega}$.
\end{itemize}
For each $m,n\in\omega$ and $x\in\R$, let $p(m,n,x)$ be the least, with respect to the lexicographic order, $s\in2^n$ such that $f_m(x) \in I_s$.

Let $\Q = \{ q_n : n \in \omega \}$ be an enumeration of the rationals such that $q_i\neq q_j$
for all distinct $i,j\in\omega$. For $t_0, \ldots, t_{n-1} \in 2^n$, define
\begin{equation*}
    B(t_0, \ldots, t_{n-1}) = \{
        m \in \omega : p(m, n, q_i) = t_i \text{ for every } i\in n
    \},
\end{equation*}
and set
\begin{equation*}
    \mathcal{B}^n = \{
        B(t_0, \ldots, t_{n-1}) : t_0, \ldots, t_{n-1} \in 2^n
    \}.
\end{equation*}
Then, $\mathcal{B}^n$ is a finite partition of $\omega$, and $\mathcal{B}^{n+1}$ refines $\mathcal{B}^n$. Thus, $\bigcup_{n\in\omega}\mathcal{B}^n$ is a finite-branching tree under inclusion.

Since $\I$ has the $\FinBW$ property, we can use
\cite[Proposition 2.8]{Ramsey}
to obtain  a sequence $( Z_n )_{n \in \omega}$ and a set $Z \in \I^+$ such that for every $n \in \omega$
\begin{itemize}
    \item $Z_n \in \mathcal{B}^n$,
    \item $Z_n \supseteq Z_{n+1}$,
    \item $Z \setminus Z_n \in \Fin$.
\end{itemize}

Let $( x_i )_{i\in\omega}$ be a sequence of pairwise distinct elements such that 
\begin{equation*}
Z\cap \bigcap_{n\in\omega}Z_n\subseteq\{ x_i : i \in \omega \} \subseteq Z
\end{equation*}
and $x_i\in Z_i$ for each $i\in\omega$. Define for each $n\in\omega$,
\begin{equation*}
    A_n =
        \left(\{ x_n \} \cup
        \left( 
            Z \setminus Z_{n+1} 
        \right)\right) \setminus
        \bigcup_{i < n} A_i. 
\end{equation*}
Then, $\bigcup_{n\in\omega}A_n=Z$ and $A_n\cap A_m=\emptyset$ whenever $n\neq m$. Moreover, $A_n\in\Fin$ and $A_n\subseteq Z\setminus Z_n$ for all $n\in\omega$.

For each $n \in \omega$, let $z(n, 0), \ldots, z(n, n-1) \in 2^n$ be such that 
\begin{equation*}
    Z_n = B\left(z(n, 0), \ldots, z(n, n-1)\right)
.
\end{equation*}
Define $f:\mathbb{Q}\to\mathbb{R}$ by $\{f(q_i)\}=\bigcap_{n>i} \operatorname{cl}(I_{z(n,i)})$. Note that $f$ is well-defined since $\lim_{n\to\infty}\operatorname{diam}(\operatorname{cl}(I_{z(n,i)}))=0$ (by (C3), (C4) and the fact that $z(n,i)\in 2^n$ for all $n>i$) and $I_{z(n,i)}\supseteq I_{z(n+1,i)}$ for all $n>i$ (by $B\left(z(n, 0), \ldots, z(n, n-1)\right)=Z_n \supseteq Z_{n+1}=B\left(z(n+1, 0), \ldots, z(n+1, n)\right)$). Also note that $( f_m\restriction \mathbb{Q})_{m\in Z}$ is pointwise convergent to $f$. Indeed, if $U$ is a neighborhood of some $f(q_i)$ then we can find $n>i$ such that $\operatorname{cl}(I_{z(n,i)})\subseteq U$. Then for all $m\in Z\cap Z_n$, we have $f_m(q_i)\in \operatorname{cl}(I_{z(n,i)})\subseteq U$. Since $Z\cap Z_n=Z\setminus (Z\setminus Z_n)$ and $Z\setminus Z_n$ is finite, $( f_m(q_i))_{m\in Z}$ converges to $f(q_i)$.

For each $\delta \in \{0, 1\}$, we will construct
\begin{itemize}
    \item a family of finite sequences $\{ a^\delta(s) \in 2^{<\omega} : s \in 2^{< \omega} \}$,
    \item a family of nonempty open intervals $\{ U_s^\delta : s \in 2^{< \omega} \}$,
    \item a family $\{ D_s^\delta\subseteq\mathbb{Q} : s \in 2^{< \omega} \}$,
    \item a family of rationals $\{ q_s^\delta \in \mathbb{Q} : s \in 2^{< \omega} \}$,
    \item a sequence $( k_n^\delta )_{ n \in \omega }$ in $\omega$.
\end{itemize}
Additionally, for each $ n \in \omega $, we define
\begin{equation*}
    G_n^0 = A_{k_n^0} \cup \ldots \cup A_{k_n^1 - 1} \quad \text{and} \quad 
    G_n^1 = A_{k_n^1} \cup \ldots \cup A_{k_{n+1}^0 - 1}.
\end{equation*}
These objects will be chosen so that the following conditions hold for all $\delta\in\{0,1\}$ and $s\in 2^{<\omega}$ (where $\operatorname{lh}(s)$ means the length of a finite sequence $s$):
\begin{enumerate}
    \item[(P1)] $\operatorname{lh}(a^\delta(s))=\operatorname{lh}(s)+1$, 
    \item[(P2)]  
$U_{s^\frown0}^\delta, U_{s^\frown1}^\delta \subseteq U_s^\delta$ and 
$\operatorname{cl}(U_{s^\frown0}^\delta) \cap \operatorname{cl}(U_{s^\frown1}^\delta) = \emptyset$,
    
    \item[(P3)] 
    $\operatorname{diam}\left( U_{s}^\delta \right)\leq 1/2^{\operatorname{lh}(s)}$,

    \item[(P4)] $D_{s^\frown0}^\delta, D_{s^\frown1}^\delta \subseteq D_s^\delta$,
    
    \item[(P5)]  $f[D^\delta_{s^\frown j}]\subseteq I_{a^\delta(s)}$ for each $j\in\{0,1\}$,
    
    \item[(P6)] $D^\delta_{s}\subseteq U^\delta_{s}\subseteq\operatorname{cl}(D^\delta_{s})$,
    
    \item[(P7)] 
    $f_i\left[ \operatorname{cl} \left( U_{s^\frown j}^\delta \right) \right] \subseteq 
     I_{a^\delta(s\restriction(\operatorname{lh}(s)-1))}
$
    for each  $j\in \{0, 1\}$, $i \in G_{\operatorname{lh}(s)}^\delta$ unless $s=\emptyset$,

    \item[(P8)]  $q^\delta_{s}\in D^\delta_s$,
    \item[(P9)] $k^0_0=0$ and $k_n^0 < k_n^1<k_{n+1}^0$ for each $n \in \omega$,
    \item[(P10)]  
    $f_i(q^\delta_{s^\frown j})\in I_{a^\delta(s)}$
for each     $j\in\{0,1\}$ and $i\in A_m$ for some $m\geq k_{\operatorname{lh}(s)+1}^\delta$. 
\end{enumerate}

Set $U^0_\emptyset=U^1_\emptyset=(0,1)$ and $D^0_\emptyset=D^1_\emptyset=(0,1)\cap\mathbb{Q}$. Pick any $q^0_\emptyset,q^1_\emptyset\in(0,1)\cap\mathbb{Q}$. Observe that conditions (P3), (P6) and (P8) are met. At this point there is nothing to check in (P2), (P4), (P5), (P7) and (P10) (as $U^\delta_{(j)}$, $D^\delta_{(j)}$ and $q^\delta_{(j)}$ for $\delta,j\in\{0,1\}$ are not defined yet). Put $k_0^0 = 0$ and $k_0^1 = 1$ (in particular, $k_0^0<k_0^1$, so (P9) is satisfied). The sequences $a^\delta(\emptyset)$ will be defined in the next step, so we do not have to check (P1). This ends the initial step of our construction.

Suppose now that $n\in\omega$ and that we have already defined (accordingly to (P1)-(P10)) the following objects
\begin{itemize}
    \item $a^\delta(t)$ for all $t \in 2^{<n}$ and $\delta\in\{0,1\}$,
    \item $U_s^\delta, D_s^\delta$ and $q_s^\delta$ for all $s \in 2^{\leq n}$ and $\delta\in\{0,1\}$,
    \item $k^0_i$ and $k^1_i$ for all $i\leq n$.
\end{itemize}
Note that at this point $G^0_{n}$ is defined. Let
\begin{equation*}
V^0_{s}=U^0_s\cap\bigcap_{i\in G^0_{n}}f_i^{-1}[I_{a^0(s\restriction (n-1))}]
\end{equation*}
for all $s\in 2^n$ (in the case of $n=0$ put $a^0(s\restriction (n-1))=\emptyset$). Observe that $q^0_s\in V^0_{s}$, as $q^0_{s}\in D^0_s\subseteq U^0_s$ (by (P6) and (P8)) and $f_i(q^0_{s})\in I_{a^0(s\restriction (n-1))}$ for all $i\in G^0_{n}$ (by $G^0_{n}\subseteq \bigcup_{m\geq k_{n}^0}A_m$ and item (P10); in the case of $n=0$ the condition $f_i(q^0_{s})\in I_{a^0(s\restriction (n-1))}=I_\emptyset$ is trivially satisfied). In particular, $V^0_{s}\neq\emptyset$. Moreover, by the fact that $G^0_{n}$ is finite and by the continuity of all $f_i$'s, the set $V^0_{s}$ is open. 

Observe that
\begin{equation*}
D^0_s=D^0_s\cap f^{-1}[I_{a^0(s\restriction(n-1))}]=(D^0_s\cap f^{-1}[I_{a^0(s\restriction(n-1))^\frown 0}])\cup(D^0_s\cap f^{-1}[I_{a^0(s\restriction(n-1))^\frown 1}])
\end{equation*}
(by (P5) together with (C4); in the case of $n=0$ instead of (P5) use the fact that $D^0_\emptyset\subseteq \mathbb{Q}=f^{-1}[I_\emptyset]$). Since $D^0_s$ is dense in $V^0_{s}$ (by $V^0_{s}\subseteq U^0_s$ together with (P6)), we can find a nonempty open interval $W^0_{s}\subseteq V^0_{s}$ and a sequence $a^0(s)$ (equal either to $a^0(s\restriction(n-1))^\frown 0$ or to $a^0(s\restriction(n-1))^\frown 1$) such that $D^0_s\cap f^{-1}[I_{a^0(s)}]$ is dense in $W^0_{s}$. Note that (P1) is satisfied as $\operatorname{lh}(a^0(s))=\operatorname{lh}(a^0(s\restriction(n-1)))+1=n+1=\operatorname{lh}(s)+1$ (in the case of $n=0$ we get $\operatorname{lh}(a^0(\emptyset))=1$).

Find any nonempty open intervals $U^0_{s^\frown j}$, for all $s\in 2^n$ and $j\in\{0,1\}$, satisfying $\operatorname{cl}(U^0_{s^\frown j})\subseteq W^0_{s}$ , (P2) and (P3). Also, define $D^0_{s^\frown j}=U^0_{s^\frown j}\cap D^0_s\cap f^{-1}[I_{a^0(s)}]$. Note that (P4) and (P5) are met. Since $D^0_s\cap f^{-1}[I_{a^0(s)}]$ is dense in $W^0_{s}$ and $\operatorname{cl}(U^0_{s^\frown j})\subseteq W^0_{s}$, (P6) is also satisfied. Since $\operatorname{cl}(U^0_{s^\frown j})\subseteq W^0_{s}\subseteq V^0_{s}$, we get (P7).

Pick any $q^0_{s^\frown j}\in D^0_{s^\frown j}$ for all $s\in 2^{n}$ and $j\in\{0,1\}$. Then, $f(q^0_{s^\frown j})\in I_{a^0(s)}$ (by (P5)). Since $( f_i\restriction \mathbb{Q})_{i\in Z}$ is pointwise convergent to $f$ and each $I_{a^0(s)}$ is open, there is $k^0_{n+1}>k^1_{n}$ such that $f_i(q^0_{s^\frown j})\in I_{a^0(s)}$ for all $i\in \bigcup_{m\geq k_{n+1}^0}A_m$, $s\in 2^{n}$ and $j\in\{0,1\}$. Observe that (P8), (P9) and (P10) are met.

Note that at this point $G^1_{n}$ is already defined. Repeat  the above construction for $\delta=1$ instead of $\delta=0$ to get $a^1(s), U^1_{s^\frown j}, D^1_{s^\frown j}$ and $q^1_{s^\frown j}$ for all $j\in\{0,1\}$ and $s\in 2^{n}$, as well as $k^1_{n+1}$. Hence, we have defined 
\begin{itemize}
    \item $a^\delta(t)$ for all $t \in 2^{n}$ and $\delta\in\{0,1\}$,
    \item $U_s^\delta, D_s^\delta$ and $q_s^\delta$ for all $s \in 2^{n+1}$ and $\delta\in\{0,1\}$,
    \item $k^0_{n+1}$ and $k^1_{n+1}$.
\end{itemize}
This finishes the construction.

\medskip

Observe that 
\begin{equation*}
Z=\bigcup_{n\in\omega}A_n=\left(\bigcup_{n\in\omega}G^0_n\right)\cup\left(\bigcup_{n\in\omega}G^1_n\right).
\end{equation*}
Since $Z\in\I^+$, there is $\delta\in\{0,1\}$ such that $\bigcup_{n\in\omega}G^\delta_n\in\I^+$. 
Let 
\begin{equation*}
A=\bigcup_{n\in\omega}G^\delta_n
\quad \text{and} \quad
P=\bigcap_{n\in\omega}\bigcup_{s\in 2^n}\operatorname{cl}(U_{s}^\delta).
\end{equation*}
Thanks to (P2) and (P3), $P$ is a perfect set and for each $x\in P$ there is a unique $\alpha_x\in 2^\omega$ such that $\{x\}=\bigcap_{n\in\omega}\operatorname{cl}(U^\delta_{\alpha_x\restriction n})$. Let $g\colon P\to\R$ be given by $g(x)=\limsup_{n\to\infty} f(q^\delta_{\alpha_x\restriction n})$ for all $x\in P$.

We will show that $( f_n \restriction P )_{n\in A}$ is uniformly convergent to $g$. Fix any $\varepsilon>0$ and take $N\in\omega$ such that $\operatorname{diam}(I_{a^\delta(s)})<\varepsilon$ for all $s\in 2^N$ (such $N$ exists thanks to conditions (C3) and (P1)). 

Fix any $x\in P$ and $i\in A\setminus\bigcup_{n\leq N}G_{n}^\delta$. Since $\bigcup_{n\leq N}G_{n}^\delta$ is a finite set, to finish the proof we only need to show that $|f_i(x)-g(x)|<\varepsilon$. 

As $i\in A\setminus\bigcup_{n\leq N}G_{n}^\delta$, we have $i\in G^\delta_{n+1}$ for some $n\geq N$. Then, we have $x\in\operatorname{cl}(U^\delta_{\alpha_x\restriction (n+2)})$, so by (P7) we get $f_i(x)\in I_{a^\delta(\alpha_x\restriction n)}$. Moreover, by conditions (P4) and (P8) we obtain $q^\delta_{\alpha_x\restriction (n+1+k)}\in D^\delta_{\alpha_x\restriction (n+1)}$ for all $k\in\omega$. Thus, $g(x)\in \operatorname{cl}(I_{a^\delta(\alpha_x\restriction n)})$ by (P5). Since $n\geq N$, 
$\operatorname{diam}(\operatorname{cl}(I_{a^\delta(\alpha_x\restriction n)}))=\operatorname{diam}(I_{a^\delta(\alpha_x\restriction n)})\leq \operatorname{diam}(I_{a^\delta(\alpha_x\restriction N)})<\varepsilon
$
(by (C4) and the choice of $N$). Finally, we conclude that
$|f_i(x)-g(x)|\leq\operatorname{diam}(\operatorname{cl}(I_{a^\delta(\alpha_x\restriction n)}))<\varepsilon$.
\end{proof}


\bibliographystyle{amsplain}
\bibliography{references}

\end{document}